\documentclass[12pt]{amsart}

\usepackage{xy, graphicx,  hyperref}

\usepackage{amsmath}
\usepackage{amsthm}
\usepackage{amssymb}
\usepackage{amsfonts, mathrsfs}

\usepackage{tikz,verbatim}
\usetikzlibrary{positioning}

\xyoption{all} 

\makeatletter
\def\subsection{\@startsection{subsection}{3}%
	\z@{.9\linespacing\@plus.7\linespacing}{.1\linespacing}%
	{\normalfont\bfseries}}
\makeatother

\title[Characteristic Classes]{Characteristic Classes of Representations of Lie Groups}
\author{Rohit Joshi and Steven Spallone}

\newtheorem{theorem}{Theorem}
\newtheorem{corollary}{Corollary}
\newtheorem{lemma}{Lemma}

\newtheorem{prop}{Proposition}
\newtheorem{defn}{Definition}

\theoremstyle{definition}
\newtheorem{remark}{Remark}
\newtheorem{example}{Example}

\newcommand{\nc}{\newcommand}

\nc{\thm}{\theorem}
\nc{\cor}{\corollary}
\nc{\mc}{\mathcal}
\nc{\mb}{\mathbb}
\nc{\mf}{\mathfrak}
\nc{\ul}{\underline}
\nc{\ol}{\overline}
\nc{\N}{\mb N}
\nc{\R}{\mb R}
\nc{\Z}{\mb Z}
\nc{\Q}{\mb Q}
\nc{\C}{\mb C}
\nc{\Gm}{\mb G_m}

\nc{\rmt}{\res\mid_{T[2]}}

\nc{\dmo}{\DeclareMathOperator}

\nc{\mat}[4]{
	\begin{pmatrix}
		#1 & #2 \\
		#3 & #4
	\end{pmatrix}
}

\dmo{\Ker}{Ker} \dmo{\val}{val} \dmo{\ord}{ord}

\dmo{\ch}{ch}
\dmo{\id}{id}
\dmo{\odd}{odd}
\dmo{\sgn}{sgn}
\dmo{\st}{st}
\dmo{\Poly}{Poly}
\dmo{\Int}{Int}
\dmo{\diam}{diam}
\dmo{\Supp}{Supp}
\dmo{\dyn}{dyn}
\dmo{\Ad}{Ad}

\nc{\POLY}{\mc A}
\nc{\POLYZ}{\mc A_{\mathbb{Z}}}

\dmo{\BGL}{BGL}
\dmo{\BO}{BO}
\dmo{\Span}{Span}
\dmo{\ad}{ad}
\dmo{\sd}{sd}
\dmo{\HH}{H}
\dmo{\orth}{orth}
\dmo{\Sq}{Sq}
\dmo{\wt}{wt}
\nc{\beq}{\begin{equation*}}
	\nc{\eeq}{\end{equation*}}
\nc{\half}{\frac{1}{2}}
\nc{\hra}{\hookrightarrow}

\dmo{\Mod}{mod}
\dmo{\Orth}{O}
\dmo{\Or}{O}
\dmo{\core}{core}
\dmo{\res}{res}
\dmo{\lin}{lin}
\dmo{\vol}{vol}
\dmo{\Sp}{Sp}
\dmo{\diag}{diag}
\dmo{\SU}{SU}
\dmo{\SO}{SO}
\dmo{\SL}{SL}
\dmo{\GL}{GL}
\dmo{\gp}{gp}
\dmo{\PGL}{PGL}
\dmo{\U}{U}
\dmo{\SP}{SP}
\dmo{\Spin}{Spin}
\dmo{\GSp}{GSp}
\nc{\la}{\lambda}
\nc{\eps}{\varepsilon}
\dmo{\ev}{ev}

\nc{\lip}{\langle}
\nc{\rip}{\rangle}
\nc{\gm}{\gamma}
\nc{\ztwo}{\Z/2\Z}
\nc{\ms}{\mathscr}

\dmo{\Perm}{Perm}
\dmo{\Res}{Res}
\dmo{\Ind}{Ind}
\dmo{\tr}{tr}
\dmo{\Sym}{Sym}
\dmo{\reg}{reg}
\dmo{\End}{End}
\dmo{\trace}{trace}
\dmo{\Hom}{Hom}
\dmo{\Disc}{Disc}
\nc{\prd}{\prod_{\alpha \in R^+} \alpha}
\nc{\normi}{\sum_{\alpha \in R_i}\alpha^2}
\dmo{\simp}{sc}

\nc{\rmd}{\dfrac{d_{\mu}}{d_{\delta}}}
\address{Rohit Joshi, Shivachhatrapati Housing Society, Senadatta Peth, Pune-411030, Maharashtra, India}
\email{rohitsj2004@gmail.com}
\address{Steven Spallone, Indian Institute of Science Education and Research, Pune-411008, Maharashtra, India}
\email{sspallone@gmail.com}

\keywords{characteristic classes, symmetric functions, Lie groups}
\subjclass{Primary 20G20, 55R40,	05E05}

\begin{document}

\maketitle

\begin{center} \today
\end{center}

\begin{abstract}
An irreducible representation of a reductive Lie algebra, when restricted to a Cartan subalgebra, decomposes into weights with multiplicity.
The first part of this paper outlines a procedure to compute symmetric polynomials (e.g., power sums) of this multiset of weights, as functions of the highest weight.
 
Next, let $G$ be a connected reductive complex algebraic group with maximal torus $T$. We express the restrictions of the Chern classes of irreducible representations of $G$ to $T$, as polynomial functions in the highest weight. We do the same for Stiefel-Whitney classes of orthogonal representations. 
\end{abstract}

\tableofcontents

\section{Introduction}

Let $\mf g$ be a complex reductive Lie algebra, with Cartan subalgebra $\mf t$, and $k \geq 0$.   We define the $k$th power sum of a representation $\phi$ of $\mf g$ as
\beq
P_k(\phi)=\sum_{\mu} m_\phi(\mu)\mu^k,
\eeq
where $\mu$ runs over the weights of $\phi$, and $m_\phi(\mu)$ denotes the multiplicity. It takes values in the symmetric algebra of $\mf t^*$. 
As is widely known, power sums generate the algebra of symmetric functions. 

The essential case is when $\phi$ is irreducible, say with highest weight $\la$, so $\phi=\phi_\la$. 
Determining each $m_{\phi_{\la}}(\mu)$  is subtle. On the other hand 
$P_0(\phi)=\deg \phi$ is explicitly given by the Weyl Dimension formula (WDF). 
Hence we apply WDF methods to compute the power sums, following ideas from 
 \cite[Ch. VIII, page 257, Exercise 8]{bourbaki}. Here we encounter the functions
\beq
F_k(\mu,\nu)=\sum_{w \in W} \sgn(w) \lip {}^w \mu,\nu \rip^k,
\eeq
for $\mu \in \mf t^*$ and $\nu \in \mf t$. The $F_k$ may be computed, in principle, through the theory of $W$-invariant polynomial functions on $\mf t$. 
(See Section \ref{fk.gen.sec}. Here $\delta$ is the usual half-sum of positive roots.)
This being done, the exponential generating function (EGF) of the sequence $P_k(\phi)$ is the quotient of the EGF of  $F_k(\la+\delta)$ by the EGF of $F_k(\delta)$. (See \eqref{EGFs}.) This gives a recursive formula for $P_k(\phi)$. We illustrate all this with several examples. 
 
Let $G$ be a  connected reductive complex algebraic group with Lie algebra $\mf g$, and $\pi$ a representation of $G$. (Throughout this paper all representations are algebraic.) Fix a maximal torus $T$ of $G$, with Lie algebra $\mf t$. Associated to $\pi$ are Chern classes (CCs) $c_k(\pi) \in \HH^{2k}(BG,\Z)$, where $BG$ is a classifying space for $G$. When $\pi$ is orthogonal, one also has Stiefel-Whitney classes (SWCs) $w_k(\pi) \in \HH^k(BG,\Z/2\Z)$; see Section \ref{app2} for details.

 Write $c_k^T(\pi)$ for the image of $c_k(\pi)$ under the restriction map \newline $\HH^{2k}(BG,\Z) \to  \HH^{2k}(BT,\Z)$, and similarly $w_k^T(\pi) \in \HH^k(BT,\Z/2\Z)$. 
Since $c_k^T(\pi)$ is essentially the $k$th elementary symmetric function of the weights of $\pi$, we can apply the power sum calculation from the first part of the paper.
 
Let $T[2]$ be the $2$-torsion subgroup of $T$.  We describe a surjective ring homomorphism $\varphi: \HH^*(BT,\Z) \to \HH^*(BT[2],\Z/2\Z)$
which takes $c_k^T(\pi)$ to $w_k^{T[2]}(\pi)$, when $\pi$ is orthogonal.

 Let $\pi_\la$ be the irreducible representation of $G$ with highest weight $\la$.  In Section \ref{summary.ch} we give a method to compute $c_k^T(\pi_\la)$ and $w_k^T(\pi_\la)$ in terms of $\la$. The method entails that their dependence on $\la$ is polynomial:

   \begin{thm} \label{intro.theorem} For all nonnegative integers $k$, the function $\lambda \mapsto c_k^T(\pi_\la) \in \mb \HH^{2k}(BT,\Z)$ is polynomial in $\la$ of degree no greater than $k(N+1)$, where $N$ is the number of positive roots.
  \end{thm}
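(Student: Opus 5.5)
The plan is to reduce to power sums and then to the functions $F_k$. Identify $\HH^*(BT,\Z)$ with the symmetric algebra $\Sym(X^*(T))$ on the character lattice, sitting inside $\Sym(\mf t^*)$ after tensoring with $\Q$. Under this identification $c^T(\pi)=\prod_\mu(1+\mu)^{m_\pi(\mu)}$, so $c_k^T(\pi_\la)=e_k$ of the multiset of weights of $\pi_\la$. By Newton's identities, $e_k$ is a $\Q$-linear combination of products $P_{k_1}(\phi_\la)\cdots P_{k_r}(\phi_\la)$ with $k_1+\dots+k_r=k$, all $k_i\ge1$. Since $c_k^T(\pi_\la)$ lies in the lattice $\HH^{2k}(BT,\Z)$, it suffices to show that it is a polynomial in $\la$ with rational coefficients (valued in $\Sym^k(\mf t^*)$) of degree $\le k(N+1)$. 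So I will bound the degree of each $P_j(\phi_\la)$ as a function of $\la$.

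\textbf{Key step: $P_j(\phi_\la)$ is polynomial in $\la$ of degree at most $j+N$.} I will use the EGF identity \eqref{EGFs}. For $\nu\in\mf t$, it reads
\beq
\sum_{j\ge0}\frac{P_j(\phi_\la)(\nu)}{j!}\cdot\sum_{k\ge0}\frac{F_k(\delta,\nu)}{k!}=\sum_{k\ge0}\frac{F_k(\la+\delta,\nu)}{k!}.
\eeq
By Weyl's character formula, this is the identity $\ch(\phi_\la)\cdot\sum_w\sgn(w)e^{w\delta}=\sum_w\sgn(w)e^{w(\la+\delta)}$ evaluated on $\nu$.

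The function $F_k(\mu,\nu)$ is alternating in $\mu$ and in $\nu$ under $W$. Hence it vanishes for $k<N$ and is divisible by $\prod_{\alpha>0}\lip\alpha,\nu\rip$. In particular, the lowest term is $F_N(\delta,\nu)=c\prod_\alpha\lip\alpha,\nu\rip$ with $c\ne0$ (Weyl denominator). Write $F_{N+m}(\mu,\nu)=\prod_\alpha\lip\alpha,\nu\rip\cdot G_m(\mu,\nu)$. Here $G_m$ is a polynomial of degree $N+m$ in $\mu$ (it is homogeneous of degree $N+m$ in $\mu$, since $F_{N+m}$ is) and of degree $m$ in $\nu$. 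Dividing by $\prod_\alpha\lip\alpha,\nu\rip$ turns the identity into
\beq
\Big(\sum_j\frac{P_j}{j!}\Big)\Big(\sum_m\frac{G_m(\delta,\cdot)}{(N+m)!}\Big)=\sum_m\frac{G_m(\la+\delta,\cdot)}{(N+m)!},
\eeq
and $G_0(\delta,\cdot)=c\ne0$. Comparing components of degree $j$ in $\nu$ gives the recursion
\beq
\frac{c\,P_j(\phi_\la)}{j!\,N!}=\frac{G_j(\la+\delta,\cdot)}{(N+j)!}-\sum_{i<j}\frac{P_i(\phi_\la)\,G_{j-i}(\delta,\cdot)}{i!\,(N+j-i)!}.
\eeq
By induction on $j$, $P_j(\phi_\la)$ is polynomial in $\la$ of degree $\le\max(N+j,\;N+i)=N+j$. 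Substituting into the Newton expansion, a product $P_{k_1}\cdots P_{k_r}$ has degree at most $\sum_i(k_i+N)=k+rN\le k+kN=k(N+1)$, using $r\le k$.

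\textbf{Expected obstacle.} The main difficulty is justifying the division step, namely that $G_m$ is a genuine polynomial jointly in $(\mu,\nu)$ of degree $N+m$ in $\mu$. This requires that $F_k$ be alternating in $\nu$ and hence divisible by the product of the roots as polynomials. I would get this from the identity $F_k({}^w\mu,\nu)=F_k(\mu,{}^{w^{-1}}\nu)$ together with the standard fact that $W$-alternating polynomials are divisible by $\prod_\alpha\alpha$ (Section \ref{fk.gen.sec}). Integrality is automatic, so a rational polynomial suffices.
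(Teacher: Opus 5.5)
Your proposal is correct and is essentially the paper's proof (Theorem \ref{poly.thm} together with Proposition \ref{p.poly}): extract the recursion \eqref{recursive.pk} from the EGF identity \eqref{EGFs}, induct to get $\deg {\bf P}_j \le N+j$, and pass to $c_k^T=\psi(E_k)$ via Newton's identities. The differences are cosmetic: you divide explicitly by $d(\nu)$, where the paper divides by $F_N(\delta)$ and relies on Proposition \ref{ddividesfk}, and you bound $E_k$ with the closed power-sum expansion instead of the recursion \eqref{better.cn.formula}, which gives the same bound $k(N+1)$.
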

  
  \bigskip
  
  For the case $k=2$, we give a  direct formula for $c_2^T(\pi_\la)$. To state it, write $\Ad$ for the adjoint representation of $G$ on $\mf g$, and write `$| \: |$' for the Killing form on $\mf t^*$.

  \begin{thm} \label{c2.thm} When $\mf g$ is simple, we have
  \beq
  c_2^T(\pi_\la)=\frac{|\la+\delta|^2-|\delta|^2}{\dim \mf g}  \cdot \deg \pi_\la  \cdot c_2^T(\Ad).
  \eeq
  
  \end{thm}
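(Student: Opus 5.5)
Restricted to $T$, the total Chern class of $\pi$ is $\prod_\mu (1+\mu)^{m_\pi(\mu)}$, where $\HH^*(BT,\Z)$ is identified with $\Sym(X^*(T))$ and each weight sits in degree $2$. Hence $c_1^T(\pi)=P_1(\pi)$ and
\beq
c_2^T(\pi)=e_2(\text{weights})=\tfrac12\left(P_1(\pi)^2-P_2(\pi)\right).
\eeq
The plan is to compute $P_1$ and $P_2$ for $\pi=\pi_\la$. The representation $\pi_\la$ is stable under the Weyl group, so $P_1(\pi_\la)$ is a $W$-invariant linear form on $\mf t$. Since $\mf g$ is simple, $W$ acts on $\mf t^*$ without nonzero fixed vectors, so $P_1(\pi_\la)=0$. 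It follows that $c_2^T(\pi_\la)=-\tfrac12 P_2(\pi_\la)$.

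Next, $P_2(\pi_\la)$ is a $W$-invariant quadratic form on $\mf t$. Because $\mf g$ is simple, $W$ acts irreducibly on $\mf t$, so the space of such forms is one-dimensional and is spanned by the Killing form. We may therefore write $P_2(\pi_\la)=a(\la)\,Q$ for a fixed nonzero invariant quadratic form $Q$ and a scalar $a(\la)$. This reduces the theorem to showing that
\beq
\frac{a(\la)}{a(\text{adjoint})}=\frac{\deg\pi_\la\,(|\la+\delta|^2-|\delta|^2)}{\dim\mf g\cdot |\delta+\theta|^2-\dim\mf g\,|\delta|^2}\cdot\frac{\dim\mf g}{\dim\mf g},
\eeq
where $\theta$ is the highest root. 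Concretely, we need $a(\la)\propto \deg\pi_\la\,(|\la+\delta|^2-|\delta|^2)$, with a proportionality constant independent of $\la$. Checking this at $\la=\theta$, where $\deg=\dim\mf g$, then fixes the constant and yields the stated formula.

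To prove the proportionality, take the trace of $Q$ against the dual form. Contracting $P_2(\pi_\la)=\sum_\mu m(\mu)\,\mu\otimes\mu$ with the inverse Killing form gives $\sum_\mu m(\mu)|\mu|^2$, which equals $a(\la)\cdot r$, where $r=\dim\mf t$ up to normalization. So it suffices to show
\beq
\sum_\mu m_{\pi_\la}(\mu)\,|\mu|^2 = c\,\deg\pi_\la\,\left(|\la+\delta|^2-|\delta|^2\right),
\eeq
with $c$ independent of $\la$. This is the standard Casimir computation. Choose an orthonormal basis of $\mf g$ for the Killing form, compatible with $\mf t$ and with the root spaces. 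The Casimir acts on $\pi_\la$ by the scalar $|\la+\delta|^2-|\delta|^2$, so $\tr\pi_\la(\Omega)=\deg\pi_\la\,(|\la+\delta|^2-|\delta|^2)$.

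On the other hand, the trace form $B_\la(X,Y)=\tr(\pi_\la(X)\pi_\la(Y))$ is an invariant form on the simple algebra $\mf g$. It is therefore a scalar multiple $\ell_\la$ of the Killing form, and
\beq
\tr\pi_\la(\Omega)=\ell_\la\dim\mf g.
\eeq
Restricting $B_\la$ to $\mf t$ gives exactly $P_2(\pi_\la)$, since $\tr\pi_\la(H)^2=\sum_\mu m(\mu)\mu(H)^2$. Hence $P_2(\pi_\la)=\ell_\la\,(\text{Killing form on }\mf t)$ with
\beq
\ell_\la=\frac{\deg\pi_\la\,(|\la+\delta|^2-|\delta|^2)}{\dim\mf g}.
\eeq
For the adjoint representation, $\ell_{\Ad}=1$, since $B_{\Ad}$ is the Killing form itself. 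Therefore $P_2(\pi_\la)=\ell_\la P_2(\Ad)$, and the same holds for $c_2^T=-\tfrac12P_2$, which gives the theorem.

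The main point needing care is consistency of normalizations. We identify $\mf t$ with $\mf t^*$ via the Killing form and use $|\cdot|$ on $\mf t^*$. The Casimir eigenvalue is $|\la+\delta|^2-|\delta|^2$ precisely when $\Omega$ is built from the Killing form, and this choice is also what makes $\ell_{\Ad}=1$. The remaining ingredients are routine: $W$-invariance forces $P_1=0$, and invariant forms on a simple algebra are scalar multiples of one another.
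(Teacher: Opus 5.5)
Your proof is correct, but it takes a different route from the paper.

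\textbf{The paper's route.} The paper obtains $P_2(\pi_\la)$ from its Weyl-character-formula machinery. The identity of exponential generating functions \eqref{EGFs} yields the recursion \eqref{p_keqn}. Into this it feeds the closed forms of $\mathbf F_N$ and $\mathbf F_{N+2}$ from Proposition \ref{F.calc}, which are imported from the authors' earlier work and rest on anti-invariant divisibility. It then applies the Freudenthal--de Vries strange formula \eqref{strange} to turn $24\,q_2^\vee(\delta)$ into $\dim\mf g$. Theorem \ref{c_2.simp} follows via $2E_2=P_1^2-P_2$.

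\textbf{Your route.} You use the classical Dynkin-index argument:
\begin{itemize}
\item the trace form $B_\la$ is invariant, so it equals $\ell_\la K$ by simplicity;
\item computing $\tr\pi_\la(\Omega)$ in two ways gives $\ell_\la\dim\mf g=\deg\pi_\la\,(|\la+\delta|^2-|\delta|^2)$;
\item restricting to $\mf t$ gives $P_2(\pi_\la)=\ell_\la q_2=\ell_\la P_2(\Ad)$.
\end{itemize}

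\textbf{Comparison.} Your argument is shorter and self-contained. It needs no $\mathbf F_{N+2}$, no divisibility result and no strange formula. It also proves slightly more: the full trace forms on $\mf g$ are proportional, not just their restrictions to $\mf t$. The paper mentions this Casimir/Dynkin-index connection only in a remark. What the paper's route buys is uniformity: the same recursion produces every $\mathbf P_k$, hence every $\mathbf c_k$, together with polynomiality in $\la$ and degree bounds. Your method is tailored to $k=2$, where $\Sym^2(\mf t^*)^W$ is one-dimensional and one Casimir eigenvalue fixes the single unknown scalar. For larger $k$ the spaces $\Sym^k(\mf t^*)^W$ can have dimension greater than one, so several higher Casimir eigenvalues would be needed.

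\textbf{Minor points.} Your preliminary reduction is superfluous once the trace-form argument is in place. That reduction consists of the one-dimensionality of $W$-invariant quadratic forms, the contraction against $K^\vee$, and the displayed ratio involving $\theta$. That display also silently uses $|\theta+\delta|^2-|\delta|^2=1$. Finally, an orthonormal basis cannot lie in the isotropic root spaces. This does not matter, since any basis together with its $K$-dual basis defines the same Casimir.
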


  
   We also observe that $c_2(\pi)$ vanishes iff $\pi$ is trivial (Proposition \ref{c^2.doesn't.vanish}).
We illustrate all this by computing $c_k^T(\pi_\la)$ and $w_k^T(\pi_\la)$ for $G=\SL_2(\C)$, $\SL_3(\C)$, and $\PGL_2(\C)$ and $k \leq 4$.  
We also compute $c_k^T(\pi_\la)$ for $\GL_2(\C)$ and $k \leq 2$.

When $G$ is one of $\GL_n(\C)$, $\SL_n(\C)$, $\SO_n(\C)$ or $\Sp_{2n}(\C)$, we give an alternate product expression for the \emph{total} SWC $w^T(\pi)=1+w_1^T(\pi)+ w_2^T(\pi)+ \cdots$ in terms of character values of $\pi$ at elements of order two, following the techniques of \cite{GJgln} and \cite{Malik.Spallone.SLn}. Also see Section \ref{CCchar} for an analogue for total CCs.

As an application, we give a new criterion for an orthogonal representation of $G$ to lift to the corresponding spin group. (Corollary \ref{spin.crit.26}). From it, one can recover our earlier result \cite[Theorem 1]{joshi}.

When $G$ is one of $\GL_n(\C)$, $\SL_n(\C)$, or $\Sp_{2n}(\C)$, the restriction map to the torus is injective, hence computing    $c_k^T$ (respectively, $w_k^T$)  is equivalent to determining $c_k$ (respectively, $w_k$).

 Now we describe the layout of this paper. In Section \ref{PS.prelim} we introduce the power sums $P_k(\phi)$; for $\mf g=\mf{sl}_2$ we compute them explicitly.
 In Section \ref{WCF.section} we apply the Weyl Character Formula to reduce the problem to computing the $F_k$. We develop this further in Section \ref{recursive.section}; in particular we show that $P_k(\phi_\la)$ is a polynomial function of $\la$, of degree at most $N+k$, and give a general expression for $P_2$. We also indicate how higher $F_k$ may be computed. To illustrate, we work out $P_2,P_3$, and $P_4$ for $\mf g=\mf{sl}_3$ in  Section \ref{example2}.
 
 Next, we turn to characteristic classes of Lie groups. In Section \ref{cc} we prove Theorems \ref{intro.theorem} and \ref{c2.thm}. We discuss the examples of $\GL_2$ and $\PGL_2$, and summarize the general CC calculations in Section \ref{summary.ch}. We compute the SWCs $w^T(\pi)$ of orthogonal representations in Section \ref{SWC.section}. 
 
We put a few  topological technicalities in the Appendix. In particular, we explain  a theory of SWCs for orthogonal representations of Lie
groups, compatible with the existing theory for real representations.

  \bigskip
  
\textbf{Acknowledgements.}  The first author was supported by a postdoctoral fellowship from NBHM (National Board of Higher Mathematics), India.   
 We thank Amit Hogadi and Neha Malik for helpful discussions.
 
\section{Preliminaries and Notation}  \label{prelim.not.section}

\subsection{Polynomials} \label{poly.prelim}
Let $V,W$ be finite-dimensional complex vector spaces, and  $f: V \to W$ a map. We say that $f$ is \emph{polynomial}, when for all linear maps $\nu: \C \to V$ and $\mu: W \to \C$, the composition $\mu \circ f \circ \nu$ is polynomial in the usual sense. Write `$\Poly(V,W)$' for the set of such polynomial functions. We may identify $\Poly(V,\C)$ with the symmetric algebra $\Sym (V^*)$, and  $\Poly(V,W)$ with $W \otimes \Sym(V^*)$.

If $b_1, \ldots, b_d$ is a basis of $V^*$, then  $\Poly(V,\C)$ is the polynomial algebra $\C[b_1, \ldots, b_d]$. 
  When $R$ is a commutative $\C$-algebra, we can identify $\Poly(V,R)$ with the polynomial algebra $R[b_1, \ldots, b_d]$. 

Suppose $L$ is a free abelian group of finite rank. We say that a function $f: L \to W$ is  \emph{polynomial}, when it is the restriction of
 a polynomial function $f: L \otimes_\Z \C \to W$. 
Finally, if $S \subset L$ is a subset, we say that $f: S \to W$ is \emph{polynomial}, when it is the restriction of a polynomial function on $L$. 

\subsection{Lie Algebras} \label{lie.alg.sec}

 Let $\mf g$ be a reductive complex Lie algebra with Cartan subalgebra $\mf t$.
  
Let  $\mb S = \Sym_\C(\mf t^*)$ and $\mb S^*=\Sym_\C(\mf t)$. 
For $k \geq 0$, put $\mb S^k=\Sym_\C^k(\mf t^*)$.

 Write $\Lambda$ for the weight lattice; following \cite[page 200]{Fulton}, this is the set of $\mu \in \mf t^*$ which are integer-valued on all coroots of $\mf t$ in $\mf g$. 
Define  $\mb S_{\Lambda}=\Sym_{\Z} \Lambda \subset \mb S$.
Fix a choice  $\Lambda^+ \subset \Lambda$ of dominant weights.
Let $W$ be the Weyl group of $\mf t$ in $\mf g$, and let $\sgn: W \to \{\pm 1\}$ be the usual sign function \cite[Section 24.1]{Fulton}.  Write $\delta$ for the half sum of the positive roots of $\mf t$ in $\mf g$. 

Let $K$ be the Killing form on $\mf t$; when $\mf g$ is semisimple, this defines an isomorphism $\sigma: \mf t \overset{\sim}{\to} \mf t^*$ by the formula $\lip {}^\sigma \nu ,\nu' \rip=K(\nu,\nu')$.
Write $K^\vee$ for the inverse form of $K$; it is defined by $K^\vee({}^\sigma \nu,{}^\sigma \nu')=K(\nu,\nu')$. 

 \begin{defn} For semisimple $\mf g$,
 define $q_2 \in \mb S$ by $q_2(\nu)=K(\nu,\nu)$, and when $q_2^\vee \in \mb S^*$ by $q_2^\vee(\mu)=K^\vee(\mu,\mu)$.
 \end{defn}

When $\mf g$ is simple, we have the following ``strange formula'' of Freudenthal and de Vries \cite{Freudenthal} or \cite[page 257]{bourbaki}:
\begin{equation} \label{strange}
24 q_2^\vee(\delta) =\dim \mf g.
\end{equation}

\subsection{The Algebra $\mc A$}

\begin{defn} Let  $\mc A=\Sym_\C(\mf t^* \oplus \mf t)$. 
 \end{defn}
Note that $\mc A= \mb S \otimes_{\C} \mb S^*$, hence it is bigraded and may be viewed in two ways: 
\begin{enumerate}
\item The algebra of complex-valued polynomials on pairs $(\mu,\nu) \in \mf t^* \times \mf t$.
\item The algebra of $\mb S$-valued polynomials on $\mf t^*$.
\end{enumerate} 

Given   $\mu \in \mf t^*$, there is an ``evaluation map''
\beq
\ev_\mu: \mc A \to \mb S
\eeq
given by $\ev_\mu(\bf G)(\nu)= \bf G(\mu,\nu)$.

 Here are a few flavors of ``degree'' for $f \in \mc A$, to refer to later:
  
\begin{defn} \label{degree.defns}
	Let $f = \sum\limits_{i,j}f_{i,j} \in \mc A$, with $f_{i,j} \in \Sym^i(\mf t^*) \otimes \Sym^j(\mf t)$.
	\begin{enumerate}
	\item $\deg^*(f)=\max \{ i \mid \exists j \text{ with } f_{ij} \neq 0\}$.
	\item  $	\deg(f)=\max \{ j \mid \exists i \text{ with } f_{ij} \neq 0\}$.
	\item $\ul{\deg}(f)=(\deg^*(f),\deg(f)) \in \Z^2$.
	\end{enumerate}
	\end{defn}

\section{Power Sums} \label{PS.prelim}
 
 Let $(\phi,V)$ be a (finite-dimensional, complex) representation of the Lie algebra $\mf g$. Given $\mu \in \mf t^*$, write $m_\phi(\mu)$ for the multiplicity of $\mu$ as a weight of $\phi$. 
 
 \begin{defn}
 For a nonnegative integer $k$, we define
 \begin{equation} \label{defn.pk.here}
 P_k(\phi)=\sum_{\mu} m_\phi(\mu) \mu^k \in \mb S^k,
 \end{equation}
 where $\mu$ ranges over $\mf t^*$. 
 \end{defn}
 
 (It is the $k$th power sum of the weights, counted with multiplicity.)
 By \cite[page 200]{Fulton}, each weight $\mu \in \Lambda$, so $P_k(\phi) \in  {\mb S}_{\Lambda}^k$.
 
 \begin{example} \label{first.ex} Let $\phi_\ell$ be the irreducible representation of $\mf g=\mf{sl}_2$ of degree $\ell+1$. 
 Let $\mf t$ be the diagonal Cartan subalgebra. Define $\mu_0 \in \mf t^*$ by $\mu_0(\diag(1,-1))=1$.
Note that $P_k(\phi_\ell)=0$ when $k$ is odd. Moreover $P_0(\phi_\ell)=\ell+1$.
 When $k$ is even, we have
 \beq
 P_k(\phi_\ell)=2(\ell^k+ (\ell-2)^k+ \cdots + (\epsilon+2)^k + \epsilon^k)\mu_0^k,
 \eeq
 where $\epsilon$ is the remainder of $\ell$ mod $2$. 
 
 Let $B_k(x)$ be the $k$th Bernoulli polynomial of degree $k$ \cite[Ch VI, Section 1]{Bou.FRV}.
 By the well-known Faulhaber's formula, for $k$ even we have
 \beq
 1^k+2^k + \cdots +\ell^k=\frac{B_{k+1}(\ell+1)}{k+1}.
 \eeq
For $\ell$ even, it follows that
 \begin{equation} \label{faul.poly}
  P_k(\phi_\ell)=\frac{2^{k+1}}{k+1} \cdot B_{k+1} \left(\frac{\ell+2}{2} \right)   \mu_0^k.
 \end{equation}
 In fact, \eqref{faul.poly} is also true for $\ell$ odd. One could deduce this from the identity  \cite[page 292]{Bou.FRV}
 \beq
 B_{k+1}(x)=2^k \left( B_{k+1} \left(\frac{x}{2} \right)  + B_{k+1}\left(\frac{x+1}{2} \right) \right).
 \eeq
However we will soon see (Section \ref{recursive.section}) that $\ell \mapsto  P_k(\phi_\ell)$ is polynomial. Then, since both sides of \eqref{faul.poly} are values of polynomials
for even $\ell$, equality holds for all $\ell$.  For $k=2$ this gives $P_2(\phi_\ell)=2 \dbinom{\ell+2}{3}$.

\end{example}
 
 Returning to general $\mf g$, we can easily understand the first two power sums.
  We have $  P_0(\phi)=\deg \phi$, and $ P_1(\phi)=\sum_\mu m_\phi(\mu) \mu \in \mf t^*$ is simply the trace, i.e., the composition
  \beq
  \mf t \hookrightarrow \mf g \overset{\phi}{\to} \mf{gl}(V) \overset{\tr}{\to} \C.
  \eeq
 In particular, $ P_1(\phi)=0$ whenever $\mf g$ is semisimple.

Let `$\ad$' be the adjoint representation. Then $P_2(\ad)=q_2$ by the formula 
\beq
K(\nu_1,\nu_2)=\sum_{\alpha} \lip \alpha,\nu_1 \rip \lip \alpha,\nu_2 \rip,
\eeq
from \cite[VIII, page 226]{Bou.Lie.4-6}.

 Suppose $\phi_1,\phi_2$ are representations of  $\mf g_1$ and $\mf g_2$. 
 Write $\phi=\phi_1 \boxtimes \phi_2$ for their exterior tensor product, a representation of $\mf g=\mf g_1 \oplus \mf g_2$. It is easy to see that
 \begin{equation} \label{ratz}
 P_k(\phi)=\sum_{i+j=k} \binom{k}{i,j} P_i(\phi_1) P_j(\phi_2).
  \end{equation}
 
 For a linear functional $\phi$  on an abelian Lie algebra $\mf t$, obviously $ P_k(\phi)=\phi^k$.
 Hence the essential case is when $\mf g$ is \emph{simple}. Our first goal in this paper is to determine $P_k(\phi)$ for irreducible representations as a function of the highest weight, when $\mf g$ is simple.
 
\begin{prop} \label{k.even.p.van} Suppose $k$ is even. Then $P_k(\phi)=0$ iff $\phi$ is trivial.
\end{prop}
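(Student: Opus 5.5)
The plan is a positivity argument: evaluate $P_k(\phi)$ at real points of $\mf t$, where every weight takes real values, so that for even $k$ the sum defining $P_k(\phi)$ becomes a sum of nonnegative terms. Two conventions are needed for the statement to be correct as written.
\begin{enumerate}
\item We take $k>0$, since $P_0(\phi)=\deg\phi$ never vanishes for a nonzero representation.
\item We take $\mf g$ semisimple, or more generally assume that every weight of $\phi$ is real-valued on some real form of $\mf t$.
\end{enumerate}
The second convention is genuinely required. For $\mf g=\mf t=\C$ and $\phi$ the sum of the characters with weights $1$ and $i$, one gets $P_2(\phi)=1+i^2=0$, although $\phi$ is not trivial. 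In the reductive case the weights on the centre are unconstrained, and positivity can fail.

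\textbf{Step 1 (a real form).} Let $\mf t_\R\subset\mf t$ be the real span of the coroots. When $\mf g$ is semisimple, the coroots span $\mf t$ over $\C$, so $\mf t_\R$ is a real form of $\mf t$. By \cite[page 200]{Fulton}, every weight $\mu$ of $\phi$ lies in $\Lambda$. Hence $\mu$ is integer-valued on the coroots, and so $\lip\mu,\nu\rip\in\R$ for all $\nu\in\mf t_\R$.

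\textbf{Step 2 (positivity).} View $P_k(\phi)\in\mb S^k$ as a polynomial function on $\mf t$. Then
\beq
P_k(\phi)(\nu)=\sum_\mu m_\phi(\mu)\lip\mu,\nu\rip^k .
\eeq
Suppose $P_k(\phi)=0$ and $k$ is even. For $\nu\in\mf t_\R$ every summand above is a nonnegative real number, and the multiplicities $m_\phi(\mu)$ are positive for the weights that occur. So each $\lip\mu,\nu\rip$ must vanish. Since $\mf t_\R$ spans $\mf t$ over $\C$, every weight $\mu$ of $\phi$ is $0$.

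\textbf{Step 3 (from zero weights to triviality).} By Weyl's complete reducibility theorem, $\phi$ is a direct sum of irreducible representations $\phi_\la$. Each highest weight $\la$ is a weight of $\phi$, so $\la=0$ by Step 2. The irreducible representation with highest weight $0$ is the trivial one, so $\phi$ is trivial. The converse direction is immediate: if $\phi$ is trivial, all its weights are $0$, and $P_k(\phi)=0$ for $k>0$.

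\textbf{Main obstacle.} The only real issue is the hypothesis needed for Step 1, namely the existence of a real form of $\mf t$ on which all weights are real. In the semisimple case it is automatic. In the reductive case I would impose it explicitly, for instance by requiring that the centre act with real weights. This holds for the representations relevant later, restrictions from a compact real form with weights in $\HH^2(BT,\Z)$, where the weights are integral characters and hence real on the real span of the cocharacter lattice. Under that assumption Steps 2 and 3 go through verbatim, with "trivial" read as "all weights zero".
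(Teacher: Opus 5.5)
Your proof is correct and follows the paper's route. The paper's assertion that $\sum x_i^k=0$ in the integral polynomial ring $\mb S_\Lambda$ forces every $x_i=0$ is exactly your positivity argument on the real span of the coroots, and its final step via the highest weights of the irreducible constituents is the same as yours. Your two caveats are well founded and are silently needed by the paper's proof as well: the statement fails for $k=0$, and when $\mf g$ has nonzero centre the set $\Lambda$ is not a lattice (it contains complex lines), so $\mb S_\Lambda$ is not a polynomial ring over $\Z$ and your example with weights $1$ and $i$ is a genuine counterexample.
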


\begin{proof} 
Note that $\mb S_{\Lambda}$ is isomorphic to a polynomial algebra over the integers. Hence if $x_1, \ldots, x_n \in \mb S_{\Lambda}$ with $\sum x_i^k=0$, then each $x_i=0$. So if $P_k(\phi)=0$, then the restriction of $\phi$ to $\mf t$ is trivial. But this restriction contains all the highest weights of the irreducible constituents of $\phi$. Hence $\phi$ is trivial.

\end{proof}

 We can similarly define the elementary symmetric functions of the weights of $\phi$. 
 
 \begin{defn} \label{e.defn.Here} Put $E(\phi)=\prod_{\mu} (1+ \mu)^{m_{\phi}(\mu)} \in \mb S_{\Lambda}$, and write $E_k(\phi) \in \mb S^k_{\Lambda}$ for the degree $k$ term of  $E(\phi)$.
 \end{defn}
 In particular $E_0(\phi)=1$. The recursive formula (2.11') from \cite[page 23]{macdonald} gives:
 \begin{equation} \label{better.cn.formula}
 nE_n(\phi)=\sum_{r=1}^n (-1)^{r-1} P_r(\phi) E_{n-r}(\phi).
 \end{equation}

A closed formula is given by Newton's Identity:
 \beq
  n! E_n(\phi)=\det \begin{pmatrix} 
 P_1(\phi) & 1 & 0 & \cdots & 0 \\
 P_2(\phi) & P_1(\phi) & 2 & \cdots & 0 \\
 \vdots & \vdots & \vdots & & \vdots \\
 P_{n-1}(\phi) & P_{n-2}(\phi) & & \cdots & n-1\\
 P_n(\phi) & P_{n-1}(\phi) & & \cdots & P_1(\phi) \\
 \end{pmatrix}.
 \eeq
 
 \begin{example} We continue with  Example \ref{first.ex}. From the above formulas, we deduce that $E_{k}(\phi_\ell) =0$ for $k$ odd,  $E_2(\phi_\ell)= -\binom{\ell+2}{3} \mu_0^2$,  and $E_4(\phi_\ell)= \frac{1}{3} \binom{\ell+2}{5} (5\ell+12) \mu_0^4$.  
 \end{example}

   \section{Weyl Character Formula} \label{WCF.section}
 
 In this section we follow ideas from the proof of the Weyl character formula, for example in \cite[Ch. VIII, \S 9]{bourbaki}. Indeed, our technique takes ideas from  [ibid, page 257, Exercise 8].
 
 Write $\mb S[[X]]$ for the formal power series ring over $\mb S$. 
 Let $\C[\mf t^*]$ be the complex group algebra over the additive group $\mf t^*$, say with basis $e_\mu$ for $\mu \in \mf t^*$.   We shall write the action of the Weyl group $W$ on $\mf t^*$ and associated objects with leading superscript, e.g., $\mu \mapsto {}^w \mu$.
 
 Define a $\C$-algebra homomorphism $f: \C[\mf t^*] \to \mb S[[X]]$ by
 \beq
 \begin{split}
 f(e_\mu) &=e^{\mu \cdot X} \\
 		&= 1+ \mu X + \half \mu^2 X^2 + \cdots \\
		\end{split}
		\eeq
As usual, put $J(\mu)=\sum_{w \in W}\sgn(w) \: {}^w e_\mu$, and for a representation $\phi$ of $\mf g$ put $\mf X_\phi=\sum_\mu m_\phi(\mu) e_\mu$. When $\phi$ is irreducible with highest weight $\la \in \Lambda^+$, we shall write $\phi=\phi_\la$.
Weyl's Character formula \cite[Ch. VIII, \S 9, Theorem 1]{bourbaki} may be written as:
\begin{equation} \label{WCForm}
J(\la+\delta)=J(\delta) \mf X_{\phi_\la}.
\end{equation}
Generally,
\beq
 \begin{split}
 f(\mf X_\phi) &= f \left( \sum_\mu m_\phi(\mu) \mu \right) \\
 			&= \sum_\mu m_\phi(\mu) e^{\mu X} \\
			&=\sum_\mu m_\phi(\mu) \sum_{i=0}^\infty \left( \frac{\mu^i}{i!}X^i \right) \\
			&=\sum_{i=0}^\infty \frac{X^i}{i!} \left( \sum_\mu m_\phi(\mu) \mu^i \right) \\
			&= \sum_{i=0}^\infty \frac{P_i(\phi)}{i!}X^i. \\
			\end{split}
			\eeq
Meanwhile,
\begin{align*}
	f(J(\mu))&= f \left(\sum_{w \in W}\sgn(w)\: {}^w \mu \right)\\
	&=\sum_{w \in W} \sgn(w) \: e^{{}^w \mu \:  X}\\
 	&= \sum_{w \in W} \sgn(w)\left(\sum_{i=0}^{\infty}\dfrac{ {}^w \mu^i}{i!} X^i \right)\\
 	&= \sum_{i=0}^{\infty} \dfrac{X^i}{i!} \sum_{w \in W} \sgn(w) \: ({}^w \mu)^i\\
 	&= \sum_{i=0}^{\infty} \dfrac{F_i(\mu)}{i!} X^i,
 \end{align*}
where we define
\begin{equation} \label{orig.Fk}
F_i(\mu)=\sum_{w \in W} \sgn(w) ({}^w \mu)^i \in \mb S^i.
\end{equation}
Hence applying $f$ to \eqref{WCForm} gives
\begin{equation} \label{EGFs}
 \sum_{i=0}^\infty \frac{F_i(\la+\delta)}{i!}X^i = \left( \sum_{j=0}^\infty \frac{F_j(\delta)}{j!}X^j \right) \left( \sum_{k=0}^\infty \frac{P_k(\phi_\lambda)}{k!}X^k \right).
 \end{equation}

In other words, the exponential generating function (EGF) for $F_i(\la+\delta)$ is the product of the EGF for $F_i(\delta)$ and the EGF for $P_k(\phi_\la)$. Therefore it is enough to evaluate the $F_i(\mu)$.

 \section{A Recursive Formula for Power Sums} \label{recursive.section}
 
 \subsection{Computing the $F_k$: First Steps}
 
 As mentioned in the introduction, the polynomials $F_k$ are crucial  for our calculation.
 First we``inflate'' them as follows.
 
 \begin{defn} Let ${\bf F}_k \in \mc A$ be given by
 \beq
 {\bf F}_k(\mu,\nu)=\sum_{w \in W} \sgn(w) \lip {}^w \mu,\nu \rip^k.
 \eeq
 \end{defn}
Given $\mu \in \mf t^*$, we have $\ev_\mu({\bf F}_k)=F_k(\mu)$. Since ${\bf F}_k \in \Sym^k(\mf t^*) \otimes \Sym^k(\mf t)$, either ${\bf F}_k=0$ or  $\ul \deg({\bf F}_k)=(k,k)$. 

Let $N$ be the number of roots of $\mf t$ in  $\mf g$. Define $d, d^\vee \in \mc A$ as 
\beq
d = \prod_{\alpha >0} \alpha \in \Sym^N(\mf t^*),
\eeq
 where $\alpha$ runs over positive roots and $$d^\vee= \prod_{\alpha^\vee > 0} \alpha^\vee \in \Sym^N(\mf t). $$
Moreover we have $d \cdot d^\vee \in \Sym^N(\mf t^*) \otimes \Sym^N(\mf t) \subset \mc A$; note that $\ul \deg(d)=(N,0)$ and $\ul \deg (d^\vee)=(0,N)$.
 
 \begin{prop} \label{ddividesfk} The product $d \cdot d^\vee$ divides ${\bf F}_k$ in $\mc A$.
 \end{prop}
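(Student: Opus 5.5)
We view $\mc A=\mb S\otimes_\C \mb S^*$ as the polynomial ring on $\mf t^*\times\mf t$; it is a UFD. The plan is to show separately that $d$ and $d^\vee$ divide ${\bf F}_k$, and that these factors are coprime, so their product divides ${\bf F}_k$.

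\emph{Step 1: divisibility by each root.} Fix a positive root $\alpha$ and let $s_\alpha\in W$ be the reflection. Reindexing the sum by $w\mapsto w s_\alpha$ and using $\sgn(ws_\alpha)=-\sgn(w)$ gives
\beq
{\bf F}_k({}^{s_\alpha}\mu,\nu)=-{\bf F}_k(\mu,\nu).
\eeq
Hence ${\bf F}_k$ vanishes on the hyperplane $\{(\mu,\nu): \lip \mu,\alpha^\vee\rip=0\}$, since ${}^{s_\alpha}\mu=\mu$ there. This hyperplane is the zero set of the linear form $\alpha^\vee\in\mf t\subset\mc A$, viewed as a function of $\mu$. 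A polynomial vanishing on the zero set of a nonzero linear form is divisible by it. So $\alpha^\vee$ divides ${\bf F}_k$.

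Symmetrically, $W$-invariance of the pairing gives $\lip {}^w\mu,\nu\rip=\lip \mu,{}^{w^{-1}}\nu\rip$. Since $\sgn(w^{-1})=\sgn(w)$, we get ${\bf F}_k(\mu,\nu)=\sum_w \sgn(w)\lip\mu,{}^w\nu\rip^k$. The same argument in the $\nu$ variable then shows that ${\bf F}_k(\mu,{}^{s_\alpha}\nu)=-{\bf F}_k(\mu,\nu)$, and hence that $\alpha\in\mf t^*\subset\mc A$ divides ${\bf F}_k$.

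\emph{Step 2: assembling the product.} The linear forms $\alpha$ ($\alpha>0$) are pairwise non-proportional, so they are pairwise non-associate primes in $\mc A$. The same holds for the $\alpha^\vee$. Moreover each $\alpha$ lies in $\mf t^*$ and each $\alpha^\vee$ lies in $\mf t$, so no $\alpha$ is associate to any $\alpha^\vee$. All $2\cdot\#\{\alpha>0\}$ of these linear forms are therefore distinct primes of the UFD $\mc A$, each dividing ${\bf F}_k$. Consequently their product $d\cdot d^\vee$ divides ${\bf F}_k$. This includes the trivial case ${\bf F}_k=0$.

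The only point needing care is the statement, in Step 1, that vanishing on a hyperplane implies divisibility. To prove it, choose coordinates on $\mf t^*\times\mf t$ in which $\alpha^\vee$ (or $\alpha$) is one coordinate, and divide by that coordinate. Because the ground field is $\C$ and hence infinite, a polynomial that vanishes identically on the hyperplane has zero remainder. I do not expect a serious obstacle here. One should, however, check the convention relating the subscript $N$ to the number of positive roots; this affects only degree bookkeeping, not divisibility.
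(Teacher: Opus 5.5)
Your proof is correct and is essentially the paper's argument. The paper notes that ${\bf F}_k$ is anti-invariant under $W \times W$ acting on $\mf t^* \oplus \mf t$, and cites Humphreys' result that alternating polynomials for a reflection group are divisible by the product of the linear forms defining the reflecting hyperplanes; your Steps 1 and 2 prove that cited fact directly, by showing vanishing on each hyperplane $\{\alpha^\vee = 0\}$ and $\{\alpha = 0\}$ and then using that these are pairwise non-associate primes in the UFD $\mc A$.
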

 
 \begin{proof} The polynomial ${\bf F}_k$ is anti-W-invariant, in the sense that for all $w \in W$ we have
 \beq
 {\bf F}_k({}^w\mu,\nu)=\sgn(w) \cdot {\bf F}_k(\mu,\nu)=  {\bf F}_k(\mu,{}^w\nu).
 \eeq
 Therefore the divisibility follows from \cite[Proposition 3.13, Page 69]{RGCG}, taking $V=\mf t^* \oplus \mf t$ with reflection group   $W \times W$.
 \end{proof}
 
 \begin{defn} We put
 \beq
 {\bf F}_k'=\frac{{\bf F}_k}{d \cdot d^\vee} \in \mc A.
 \eeq
 \end{defn}
 
 By degree considerations, ${\bf F}_k=0$ when $k<N$. When $k \geq N$, we have ${\bf F}_k' \in \Sym^{k-N}(\mf t^*)^W \otimes \Sym^{k-N}(\mf t)^W$.

 When $\mf g$ is simple, the invariant subspace $\Sym^2(\mf t)^W$ is spanned by $q_2^\vee$, and similarly $q_2$ spans $\Sym^2(\mf t^*)^W$
(See for instance \cite[Table 1, page 59]{RGCG}.) 
 
 
\begin{prop} \label{F.calc}   For $\mf g$ simple, we have 
\begin{equation} \label{F_N.mu.eqn}
{\bf F}_N=N! \frac{d d^\vee}{d^\vee(\delta)},
\end{equation}
and
 \begin{equation} \label{F_N+2.mu.eqn}
{\bf F}_{N+2}=\binom{N+2}{2}  \frac{q_2 q_2^\vee}{\dim \mf g} {\bf F}_N.
\end{equation}
Moreover for $0 \leq k<N$ and $k=N+1$ we have ${\bf F}_k=0$.
\end{prop}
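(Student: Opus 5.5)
We read everything off Proposition \ref{ddividesfk}. It gives ${\bf F}_k = d\, d^\vee\, {\bf F}_k'$ with ${\bf F}_k' \in \Sym^{k-N}(\mf t^*)^W \otimes \Sym^{k-N}(\mf t)^W$.

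\emph{Vanishing.} For $k<N$ the bidegree of ${\bf F}_k$ is $(k,k)$, which is too small to be divisible by $d\,d^\vee$ of bidegree $(N,N)$, so ${\bf F}_k=0$.

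For $k=N+1$, the quotient ${\bf F}_{N+1}'$ lies in $\Sym^1(\mf t^*)^W\otimes\Sym^1(\mf t)^W$. For simple $\mf g$, $W$ acts irreducibly and nontrivially on $\mf t$, so $(\mf t^*)^W=0$ and ${\bf F}_{N+1}=0$.

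\emph{The case $k=N$.} Here ${\bf F}_N' = c$ is a scalar. To find $c$, evaluate at $\mu=\delta$, using the denominator identity in $\mb S$:
\beq
\sum_{w}\sgn(w)e^{{}^w\delta}=\prod_{\alpha>0}(e^{\alpha/2}-e^{-\alpha/2}).
\eeq
Under $f$ this becomes $F_k(\delta)$ on the left. On the right the lowest-order term is $X^N d$. Comparing $X^N$ coefficients gives $F_N(\delta)=N!\,d$.

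On the other hand, evaluating ${\bf F}_N = c\,d\,d^\vee$ at $\mu=\delta$ gives $F_N(\delta)=c\,d^\vee(\delta)\,d$. Hence $c=N!/d^\vee(\delta)$. This uses $d^\vee(\delta)\ne0$, which holds since $\lip\alpha^\vee,\delta\rip>0$ for every positive root, reading $d^\vee(\delta)$ as $\prod\lip\delta,\alpha^\vee\rip$.

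\emph{The case $k=N+2$.} Since $\Sym^2(\mf t^*)^W$ and $\Sym^2(\mf t)^W$ are spanned by $q_2$ and $q_2^\vee$, we have ${\bf F}_{N+2}' = c'\,q_2\,q_2^\vee$, and therefore ${\bf F}_{N+2}= c'\frac{d^\vee(\delta)}{N!}\,q_2 q_2^\vee\,{\bf F}_N$. To find $c'$ we again evaluate at $\mu=\delta$ and compare the $X^{N+2}$ coefficients in the denominator identity. Writing
\beq
e^{\alpha X/2}-e^{-\alpha X/2}=\alpha X\left(1+\tfrac{\alpha^2X^2}{24}+\cdots\right),
\eeq
the $X^{N+2}$ coefficient is $d\cdot\frac{1}{24}\sum_{\alpha>0}\alpha^2$. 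Since the sum over all roots satisfies $\sum_\alpha \alpha^2=P_2(\ad)=q_2$, the sum over positive roots is $\half q_2$. So
\beq
F_{N+2}(\delta)=(N+2)!\,\frac{q_2}{48}\,d.
\eeq
The formula to be proved predicts
\beq
\binom{N+2}{2}\frac{q_2\,q_2^\vee(\delta)}{\dim\mf g}\,N!\,d.
\eeq
By the strange formula \eqref{strange}, $q_2^\vee(\delta)/\dim\mf g = 1/24$, so the prediction is $\frac{(N+2)!}{2}\cdot\frac{1}{24}\,q_2\,d$, which matches.

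The main obstacle is the bookkeeping in this last step. We must use the convention that ``$N$'' in the degree of $d$ counts positive roots, and we must correctly match $\sum_{\alpha>0}\alpha^2$ with $q_2$ via $P_2(\ad)=q_2$, including the factor $\half$. A uniform evaluation at $\mu=\delta$ determines each scalar, because $d\,q_2\ne0$ in $\mb S$.
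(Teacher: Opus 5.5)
Your proof is correct. The paper gives no argument of its own here; it only says the proposition is a restatement of \cite[Proposition 5]{joshi}. You derive it inside the paper's own framework, as follows.
\begin{enumerate}
\item Proposition \ref{ddividesfk} reduces each ${\bf F}_k$ to a $W\times W$-invariant quotient ${\bf F}_k'$ of bidegree $(k-N,k-N)$.
\item Bidegree then forces ${\bf F}_k=0$ for $k<N$.
\item Since there are no nonzero degree-one $W$-invariants, ${\bf F}_{N+1}=0$.
\item The invariants in degrees $0$ and $2$ are one-dimensional, so ${\bf F}_N$ and ${\bf F}_{N+2}$ are each fixed up to a single scalar.
\item You fix each scalar on the slice $\mu=\delta$. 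There the denominator identity lets you expand $f(J(\delta))=\prod_{\alpha>0}(e^{\alpha X/2}-e^{-\alpha X/2})$ to order $X^{N+2}$. This uses $P_2(\ad)=q_2$ and the nonvanishing of $d^\vee(\delta)$ and $q_2^\vee(\delta)$.
\end{enumerate}

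What your route buys is self-containedness and transparency. It produces the constant directly in the form
\[
{\bf F}_{N+2}=\frac{(N+2)!}{48\,q_2^\vee(\delta)\,d^\vee(\delta)}\,q_2q_2^\vee dd^\vee,
\]
which is the form the paper actually uses in the proof of Corollary \ref{p2}. It also shows that the strange formula \eqref{strange} serves only to convert $48\,q_2^\vee(\delta)$ into $2\dim\mf g$.

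Your argument also shows why this shortcut does not extend much further. Evaluation $\ev_\delta$ gives only $\dim \Sym^{k-N}(\mf t)^W$ linear conditions on ${\bf F}_k'$. Moreover $F_k(\delta)=0$ whenever $k-N$ is odd, because the denominator product contains only the powers $X^{N+2j}$. This is why ${\bf F}_6$ for $\mf{sl}_3$ in Section \ref{example2} has to be determined at a different point $\mu_0$.

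The paper's citation is shorter, but it leaves all of this to \cite{joshi}.
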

 
 \begin{proof}
 This is a restatement of \cite[Proposition 5]{joshi}.
	\end{proof}

   The same formula holds for ${\bf F}_N$ when $\mf g$ is reductive,. To go further, let $\mf z$ be the center of $\mf g$ and define
$u_{\mf z} \in \Sym(\mf z^* \oplus \mf z) \subset \mc A$ as the quadratic polynomial taking the pair $(\mu,\nu)$ to the evaluation $\lip \mu,\nu \rip$.
 By \cite[Proposition 6]{joshi}, we have
  \begin{equation}
 {\bf F}_{N+1}=(N+1) u_{\mf z} \cdot {\bf F}_N.
 \end{equation}
  When the derived algebra $\mf g'$ of $\mf g$ is simple, we have
   \begin{equation} \label{F_N+2.mu.eqn}
{\bf F}_{N+2}=\binom{N+2}{2}  \left( \frac{q_2 q_2^\vee}{\dim \mf g}+ u_{\mf z}^2 \right){\bf F}_N.
\end{equation} 
Here $q_2$ and $q_2^\vee$ are with respect to $\mf g'$.

 \begin{defn}
 We write $-1 \in W$, when the longest Weyl group element $w_0$ acts by $-1$ on $\mf t$. 
 \end{defn}
 
 \begin{prop} \label{-1.Fk} Suppose that $-1 \in W$. If $N+k$ is odd, then ${\bf F}_k=0$.
\end{prop}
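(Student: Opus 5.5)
Write $w_0$ for the longest element of $W$. The plan is to compute ${\bf F}_k(\mu,\nu)$ in two ways using $w_0$, and compare the resulting signs.

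\textbf{Step 1: $\sgn(w_0)=(-1)^N$.} This holds because the length of $w_0$ equals the number $N$ of positive roots, and $\sgn(w)=(-1)^{\ell(w)}$.

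\textbf{Step 2: compute ${\bf F}_k(-\mu,\nu)$ via $w_0$.} Since $-1 \in W$, the element $w_0$ acts by $-1$ on $\mf t$, and hence by $-1$ on $\mf t^*$. So ${}^{w_0}\mu=-\mu$. By the anti-$W$-invariance established in the proof of Proposition \ref{ddividesfk},
\beq
{\bf F}_k(-\mu,\nu)={\bf F}_k({}^{w_0}\mu,\nu)=\sgn(w_0)\,{\bf F}_k(\mu,\nu)=(-1)^N{\bf F}_k(\mu,\nu).
\eeq

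\textbf{Step 3: compute ${\bf F}_k(-\mu,\nu)$ via homogeneity.} By definition each summand $\lip {}^w(-\mu),\nu\rip^k=(-1)^k\lip {}^w\mu,\nu\rip^k$. Therefore
\beq
{\bf F}_k(-\mu,\nu)=(-1)^k{\bf F}_k(\mu,\nu).
\eeq

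\textbf{Step 4: conclude.} Comparing Steps 2 and 3 gives $\left((-1)^k-(-1)^N\right){\bf F}_k=0$. When $N+k$ is odd the scalar is $\pm 2\neq 0$, so ${\bf F}_k=0$ as an element of $\mc A$. This is a polynomial identity over $\C$, so no characteristic issues arise.

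There is no real obstacle in this argument. The only point needing care is Step 1, the identity $\sgn(w_0)=(-1)^N$; it is standard, with \cite[Section 24.1]{Fulton} giving $\sgn$ through the length function.
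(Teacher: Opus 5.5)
Your proposal is correct and is the paper's own argument. The paper proves it with the single chain $(-1)^k{\bf F}_k(\mu,\nu)={\bf F}_k(-\mu,\nu)={\bf F}_k({}^{w_0}\mu,\nu)=\sgn(w_0){\bf F}_k(\mu,\nu)=(-1)^N{\bf F}_k(\mu,\nu)$, which is exactly your Steps 1--4 combined.
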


\begin{proof} 
	The proposition follows from
	\beq
	\begin{split}
		(-1)^k {\bf F}_k(\mu,\nu) &={\bf F}_k(-\mu,\nu) \\
		&= {\bf F}_k({}^{w_0}\mu,\nu) \\
		&= \sgn(w_0) {\bf F}_k(\mu,\nu) \\
		&=(-1)^N {\bf F}_k(\mu,\nu). \\
	\end{split}
	\eeq \end{proof}
	Note that $F_N(\mu) \neq 0$ when $\mu$ is regular. Taking coefficients of $X^{N+i}$ in \eqref{EGFs} with $i \geq 1$ gives
 \begin{equation}\label{Fpk}
 \frac{F_{N+i}(\la+\delta)}{(N+i)!}=\sum_{k=0}^{i} \frac{P_k(\phi_\la)F_{N+i-k}(\delta)}{k! (N+i-k)!}.
 \end{equation}
 Solving for $P_i(\phi_\la)$ gives 
 
 \begin{equation}\label{recursive.pk}
 P_i(\phi_\la) =\frac{F_{N+i}(\la+\delta)-\sum_{k=0}^{i-1} \binom{N+i}{k} P_k(\phi_\la) F_{N+i-k}(\delta)}{\binom{N+i}{i} F_N(\delta)}.
 \end{equation}
Hence, we may recursively compute the power sums in terms of the $F_k$. Since the $F_k$ are polynomials, the function
\beq
\la \mapsto P_k(\phi_\la):=\sum_\mu m_{\phi_\la}(\mu)\mu^k,
\eeq
originally defined only for dominant $\la$, extends uniquely to a polynomial function from $\mf t^*$ to $\mb S^k$. We write this function as ${\bf P}_k \in \mc A$. To reiterate:

\begin{defn}
  Let ${\bf P}_k \in \mc A$ be the unique $\mb S^k$-valued polynomial whose value ${\bf P}_k(\la)$ at each dominant $\la \in \Lambda^+$ is given by
  ${\bf P}_k(\la)=P_k(\phi_\la)$. Here $P_k(\phi_\la)$ is defined by  \eqref{defn.pk.here}. 
Let  ${\bf E}_k \in \mc A$ be the unique $\mb S^k$-valued polynomial whose value ${\bf E}_k(\la)$ at each dominant $\la$ is  the degree $k$ term
of $E(\phi_\la)$. Here $E(\phi_\la)$ is given by Definition \ref{e.defn.Here}.
\end{defn}
Again, ${\bf P}_k$ and ${\bf E}_k$ are related through \eqref{better.cn.formula}. When ${\bf P}_k$ is viewed as a function on $\mf t^* \oplus \mf t$, we have
\beq
{\bf P}_k(\la,\nu)=\sum_\mu m_{\phi_\la}(\mu) \lip \mu,\nu \rip^k,
\eeq
for $\la$ a dominant weight.


 Let $\tau: \mc A \to \mc A$ be defined by $\tau(f)(\mu,\nu)=f(\mu+\delta,\nu)$, i.e., it is ``translation by $\delta$''. Recall the Weyl Dimension Formula
 \beq
 \deg \phi_\la=\frac{\prod  \lip \alpha^\vee, \la + \delta \rip}{\prod \lip \alpha^\vee, \delta \rip},
 \eeq
  where the products are taken over positive roots. Hence 
 \beq
 {\bf P}_0 =\frac{\tau(d^\vee)}{d^\vee(\delta)} \in \Sym^N \mf t.
 \eeq 
 We rewrite \eqref{recursive.pk} as 
 \begin{equation} \label{p_keqn}
 {\bf P}_i =\frac{\tau({\bf F}_{N+i})-\sum_{k=0}^{i-1} \binom{N+i}{k} F_{N+i-k}(\delta) {\bf P}_k }{\binom{N+i}{i} F_N(\delta)},
 \end{equation}
  for each $i \geq 1$. For $i=1$ this gives ${\bf P}_1=u_{\mf z} \cdot {\bf P_0}$, which is $0$ when $\mf g$ is semisimple.

 \begin{prop} \label{-1.p} If $-1 \in W$, then ${\bf P}_i=0$ for $i$ odd.
 \end{prop}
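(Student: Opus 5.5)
The argument works directly at the level of weights, with no need for the recursion \eqref{p_keqn}. Suppose $-1 \in W$, so the longest element $w_0$ acts on $\mf t$, and hence on $\mf t^*$, by $-1$. For any representation $\phi$ of $\mf g$ the multiplicity function $m_\phi$ is $W$-invariant. In particular $m_\phi(-\mu)=m_\phi({}^{w_0}\mu)=m_\phi(\mu)$ for every $\mu$. Applying this to $\phi=\phi_\la$ with $\la \in \Lambda^+$ and reindexing the sum $\mu \mapsto -\mu$ gives
\beq
P_i(\phi_\la)=\sum_\mu m_{\phi_\la}(\mu)\mu^i=\sum_\mu m_{\phi_\la}(-\mu)(-\mu)^i=(-1)^i P_i(\phi_\la).
\eeq
So $P_i(\phi_\la)=0$ for $i$ odd and every dominant $\la$.

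Next we pass from dominant weights to the polynomial ${\bf P}_i$. By definition ${\bf P}_i$ is a polynomial on $\mf t^*$ with values in $\mb S^i$, and it vanishes on all of $\Lambda^+$. Since $\Lambda^+$ is Zariski dense in $\mf t^*$, ${\bf P}_i=0$. Density holds because $\Lambda^+$ contains $\sum_j n_j \varpi_j$ for all $n_j \in \Z_{\ge 0}$, where the $\varpi_j$ are the fundamental weights (plus a lattice in the center in the reductive case). A polynomial vanishing on $\Z_{\ge0}^r$ in suitable coordinates is zero, by the usual one-variable-at-a-time argument. This same density fact underlies the uniqueness of ${\bf P}_k$ in its definition, so it can be quoted from there.

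A second route goes through the recursion \eqref{p_keqn}, using Proposition \ref{-1.Fk}. That proposition gives ${\bf F}_{N+j}=0$ for $j$ odd, so $\tau({\bf F}_{N+i})=0$ and $F_{N+i-k}(\delta)=0$ whenever $i-k$ is odd. One then inducts on $i$: for $i$ odd, every term in the numerator has either $k$ odd (so ${\bf P}_k=0$ by induction) or $i-k$ odd (so $F_{N+i-k}(\delta)=0$). The only subtlety is whether $-1 \in W$ forces the center $\mf z$ to be trivial; it does, since $w_0$ fixes $\mf z$ pointwise. I expect no real obstacle: the main point is simply the $W$-invariance of weight multiplicities together with the Zariski density of $\Lambda^+$.
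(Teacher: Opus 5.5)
Your proof is correct, but your primary route differs from the paper's. The paper's proof is essentially your second route: a one-line appeal to the recursion \eqref{p_keqn} and Proposition \ref{-1.Fk}. Your parity bookkeeping is exactly what makes that appeal work: for $i$ odd, $\tau({\bf F}_{N+i})=0$ outright, and exactly one of $k$, $i-k$ is odd, so each remaining term of the numerator vanishes either by induction or because $F_{N+i-k}(\delta)=0$. Your direct argument avoids the Weyl character formula and the ${\bf F}_k$ altogether. It uses only the $W$-invariance of weight multiplicities. It therefore proves the stronger statement that $P_i(\phi)=0$ for odd $i$ for \emph{every} finite-dimensional representation $\phi$ when $-1\in W$. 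To pass to the polynomial ${\bf P}_i$ it needs only the Zariski density of $\Lambda^+$, which is already implicit in the uniqueness clause defining ${\bf P}_i$. The paper's route buys coherence with its computational framework: it shows the vanishing is already visible in the antisymmetrized polynomials ${\bf F}_k$ that one actually computes with, at no extra cost once Proposition \ref{-1.Fk} is available. Your remark about the center is correct but not needed in either route, since the proof of Proposition \ref{-1.Fk} uses only that $w_0$ acts by $-1$ on $\mf t$.
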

 
 \begin{proof} This follows from  \eqref{p_keqn} and Proposition \ref{-1.Fk}. 
 \end{proof}
 
 \begin{cor}\label{p2} For $\mf g$ simple, we have ${\bf P}_1=0$ and
 \beq
 {\bf P}_2=\left( \frac{\tau(q_2^\vee)-q_2^\vee(\delta)}{\dim \mf g} \right) q_2 {\bf P}_0.
 \eeq
 \end{cor}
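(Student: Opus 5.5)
We apply the recursion \eqref{p_keqn} for $i=1,2$, feeding in the explicit values of ${\bf F}_N,{\bf F}_{N+1},{\bf F}_{N+2}$ from Proposition \ref{F.calc}.

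\textbf{The case $i=1$.} By Proposition \ref{F.calc}, ${\bf F}_{N+1}=0$, so both $\tau({\bf F}_{N+1})$ and $F_{N+1}(\delta)$ vanish. The numerator in \eqref{p_keqn} is then zero, which gives ${\bf P}_1=0$. This agrees with the remark that ${\bf P}_1=u_{\mf z}{\bf P}_0$ and $u_{\mf z}=0$ for $\mf g$ simple.

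\textbf{The case $i=2$.} Here \eqref{p_keqn} reads
\beq
{\bf P}_2=\frac{\tau({\bf F}_{N+2})-F_{N+2}(\delta){\bf P}_0-(N+2)F_{N+1}(\delta){\bf P}_1}{\binom{N+2}{2}F_N(\delta)}.
\eeq
The last term in the numerator vanishes, since $F_{N+1}=0$. By \eqref{F_N+2.mu.eqn},
\beq
\tau({\bf F}_{N+2})=\binom{N+2}{2}\frac{q_2\,\tau(q_2^\vee)}{\dim\mf g}\,\tau({\bf F}_N).
\eeq
Here $q_2\in\Sym^2(\mf t^*)$ depends only on $\nu$, so $\tau$ fixes it. Evaluating at $\mu=\delta$ gives
\beq
F_{N+2}(\delta)=\binom{N+2}{2}\frac{q_2\,q_2^\vee(\delta)}{\dim\mf g}\,F_N(\delta).
\eeq

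\textbf{Identifying the ratio with ${\bf P}_0$.} From \eqref{F_N.mu.eqn} we have $\tau({\bf F}_N)=N!\,d\,\tau(d^\vee)/d^\vee(\delta)$ and $F_N(\delta)=N!\,d$. Their ratio is therefore
\beq
\frac{\tau({\bf F}_N)}{F_N(\delta)}=\frac{\tau(d^\vee)}{d^\vee(\delta)}={\bf P}_0,
\eeq
by the Weyl dimension formula. Substituting,
\beq
{\bf P}_2=\frac{q_2}{\dim\mf g}\bigl(\tau(q_2^\vee){\bf P}_0-q_2^\vee(\delta){\bf P}_0\bigr),
\eeq
which is the claimed formula.

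The only delicate point is the division by $F_N(\delta)=N!\,d$ in $\mb S$. This is legitimate because $d\neq 0$ and $\mc A$ is a domain. One also needs to check that $\tau$ acts only on the $\mu$-variable, so that it fixes $d$ and $q_2$. Everything else is direct substitution.
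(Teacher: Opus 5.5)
Your proof is correct and follows the paper's route: apply the recursion \eqref{p_keqn} with $i=2$, substitute the formulas of Proposition \ref{F.calc}, and recognize $\tau({\bf F}_N)/F_N(\delta)=\tau(d^\vee)/d^\vee(\delta)$ as ${\bf P}_0$. The differences are cosmetic. The paper gets ${\bf P}_1=0$ from the vanishing of traces rather than from ${\bf F}_{N+1}=0$. It also passes through the strange formula \eqref{strange} because it writes $\tau({\bf F}_{N+2})$ with the constant $48\,q_2^\vee(\delta)$, whereas you use the $\dim\mf g$ form of Proposition \ref{F.calc} directly.
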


 \begin{proof} All representations of $\mf g$ have   trace zero, so ${\bf P}_1 =0$. By  \eqref{p_keqn}, we have
 \beq
 {\bf P}_2=\frac{\tau({\bf F}_{N+2})-F_{N+2}(\delta) {\bf P}_0}{\binom{N+2}{2} F_N(\delta)}.
 \eeq
 Now 
 \beq
 \tau({\bf F}_{N+2})=\frac{(N+2)!}{48} \frac{d \tau(d^\vee)q_2 \tau(q_2^\vee)}{d^\vee(\delta)q_2^\vee(\delta)},
 \eeq
 so the result follows from \eqref{strange}. 
  
 \end{proof}
 
 Note that the polynomial $\tau(q_2^\vee)-q_2^\vee(\delta)$ evaluated at $\mu \in \mf t^*$ equals 
 \beq
 |\mu+\delta|^2-|\delta|^2=\lip \mu+2 \delta,\mu \rip.
 \eeq

  \begin{remark} 
 Please see \cite{joshi} for connections of this formula with the Casimir operator  and the Dynkin index.
  \end{remark}

We now estimate the degrees of these polynomials. The reader may review Definition \ref{degree.defns} for our terminology concerning   degrees.  
  
\begin{prop}\label{p.poly} We have $\deg {\bf P}_k \leq N+k$ and $\deg {\bf E}_k \leq k(N+1)$.

For $\mf g$ semisimple, we have 
\beq
\deg {\bf E}_k \leq \left[ \frac{k}{2} \right] N +k.
\eeq
\end{prop}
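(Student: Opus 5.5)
Here $\deg$ is the degree in the $\la$-variable, i.e.\ in $\Sym(\mf t)$ (Definition~\ref{degree.defns}(2)). The plan is a strong induction on $k$ using \eqref{p_keqn}, and then \eqref{better.cn.formula} for the ${\bf E}_k$.

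\textbf{Bounding ${\bf P}_k$.} The base case is ${\bf P}_0=\tau(d^\vee)/d^\vee(\delta)$. Since $\tau$ is a translation it preserves $\deg$, so $\deg {\bf P}_0 = N$. For $i\ge 1$, look at the numerator of \eqref{p_keqn}.
\begin{itemize}
\item The term $\tau({\bf F}_{N+i})$ lies in $\Sym^{N+i}(\mf t^*)\otimes\Sym^{\le N+i}(\mf t)$, so its $\deg$ is at most $N+i$.
\item Each $F_{N+i-k}(\delta)$ is a constant in $\mb S$, i.e.\ it has $\deg 0$.
\item By induction, $\deg {\bf P}_k\le N+k< N+i$ for $k<i$.
\end{itemize}
The denominator $\binom{N+i}{i}F_N(\delta)$ is a nonzero element of $\mb S$. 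It is nonzero because $\delta$ is regular, and by Proposition~\ref{F.calc} $F_N(\delta)=N!\,d$. One small point needs care: the division must be exact in $\mc A$ and must not raise $\deg$. The quotient is already known to be a polynomial (this is how ${\bf P}_i$ was defined), and dividing by an element of $\deg 0$ never increases $\deg$. Hence $\deg{\bf P}_i\le N+i$.

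\textbf{Bounding ${\bf E}_k$.} Use \eqref{better.cn.formula} in the form $n{\bf E}_n=\sum_{r=1}^n(-1)^{r-1}{\bf P}_r{\bf E}_{n-r}$. This polynomial identity holds because it holds at all dominant $\la$, which are Zariski dense. Since $\deg$ is subadditive under products, induction gives
\[
\deg {\bf E}_n\le \max_{1\le r\le n}\bigl(N+r+(n-r)(N+1)\bigr).
\]
The $r=1$ term is the worst and gives $n(N+1)$, since increasing $r$ only lowers the bound.

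\textbf{The semisimple case.} This is the main obstacle, because the recursion with the crude bound does not close: the term ${\bf P}_2{\bf E}_{n-2}$ gives $N+2+\deg{\bf E}_{n-2}$, which matches $[n/2]N+n$ for the $r=2$ term, but the term ${\bf P}_r{\bf E}_{n-r}$ for large $r$ would need $\deg {\bf P}_r\le [r/2]N+r$. That is false for the naive bound $N+r$ only when $r=1$, and there ${\bf P}_1=0$ in the semisimple case. So the plan is to prove $\deg {\bf E}_n\le [n/2]N+n$ by strong induction, using
\begin{itemize}
\item ${\bf P}_1=0$, which kills the $r=1$ term;
\item for $r\ge 2$, $N+r\le [r/2]N+r$;
\item the inequality $[r/2]+[(n-r)/2]\le [n/2]$.
\end{itemize}
Combining these gives $\deg({\bf P}_r{\bf E}_{n-r})\le [r/2]N+r+[(n-r)/2]N+(n-r)\le [n/2]N+n$. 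The base cases are ${\bf E}_0=1$ and ${\bf E}_1={\bf P}_1=0$.

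Everything therefore hinges on the vanishing of ${\bf P}_1$ and on controlling $\deg$ through the division by $F_N(\delta)$ in \eqref{p_keqn}. The division is handled by the fact that $F_N(\delta)$ lies in $\mb S$, so it contributes nothing to the $\la$-degree.
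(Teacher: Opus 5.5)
Your proposal is correct and follows the paper's proof. You bound ${\bf P}_k$ by induction on $k$ via \eqref{p_keqn}, then bound ${\bf E}_k$ via the Newton recursion \eqref{better.cn.formula}, and in the semisimple case you use ${\bf P}_1=0$ to remove the $r=1$ term. Your explicit bookkeeping there, using $N+r\le[r/2]N+r$ for $r\ge 2$ and $[r/2]+[(n-r)/2]\le[n/2]$, simply fills in the step the paper summarizes as ``the same method will give the improved estimate.''
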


\begin{proof}
We have $\deg {\bf P}_0=\deg \tau(d^\vee)=\deg d^\vee=N$. For each $k$ either ${\bf F}_k=0$ or $\deg({\bf F}_k)=k$ by construction.   Inductively we deduce that each $\deg({\bf P}_k) \leq N+k$ from  the recursive formula \eqref{p_keqn}. For estimating $\deg {\bf E}_k$ we have ${\bf E}_0=1$, so $\deg {\bf E}_0=0$; moreover
\beq
n {\bf E}_n=\sum_{r=1}^n (-1)^{r-1} {\bf P}_r {\bf E}_{n-r}.
\eeq
by \eqref{better.cn.formula}.
Proceeding by induction we have, for $r \geq 1$,
\beq
\deg({\bf P}_r  {\bf E}_{n-r}) \leq (N+r)+(n-r)(N+1) \leq n(N+1),
\eeq
and the estimate for ${\bf E}_n$ follows.

When $\mf g$ is semisimple we have ${\bf E}_1={\bf P}_1=0$, and the same method will give the improved estimate.
\end{proof}

\subsection{Computing $F_k$ for $k >N+2$} \label{fk.gen.sec}
  
 From the above we see that computing symmetric polynomials in the roots reduces to calculating the ${\bf F_k}$.
 In this section we outline how this can be done; we will afterwards illustrate with $\mf {sl}_3(\C)$.
 
  As before, it is clear that ${\bf F}_k'$ is $W \times W$-invariant, hence lies in the subspace  \beq
 \left(\Sym^{k-N}(\mf t) \otimes \Sym^{k-N}(\mf t^*) \right)^W= \Sym^{k-N}(\mf t)^W \otimes  \Sym^{k-N}(\mf t^*)^W.
 \eeq
  
 We may view $W$ as a reflection group on $V=\mf t$. There is a well-known theory of $W$-invariant polynomials on $V$;   see the references on page 86 of \cite{RGCG} concerning the the problem of finding an explicit basis.
 
 To go a bit further, we show that ${\bf F}_k$ is invariant under one more operation. Let $\sigma: \mf t \to \mf t^*$ be the isomorphism coming from the Killing form. Since the Killing form is symmetric, we have
 $\lip {}^\sigma \nu_1, \nu_2 \rip = \lip {}^\sigma \nu_2, \nu_1 \rip$ for all $\nu_1,\nu_2 \in \mf t$. Moreover $\sigma$ is $W$-invariant.
 
  From $\sigma$ we obtain an involution $\mf t^* \oplus \mf t \to \mf t^* \oplus \mf t$ by
 \beq
 (\mu, \nu) \mapsto ({}^{\sigma}\nu, {}^{\sigma^{-1}} \mu)
 \eeq
 and this extends to an involution of $\Sym^*(\mf t^* \oplus \mf t)$, again denoted by `$\sigma$'.
 
 \begin{lemma} The polynomials ${\bf F}_k$ and ${\bf F}_k'$ are  $\sigma$-invariant.  
 \end{lemma}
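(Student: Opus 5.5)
The plan is to compute directly how $\sigma$ acts on ${\bf F}_k$, and then deduce the statement for ${\bf F}_k'$ by showing that $\sigma$ also preserves $d \cdot d^\vee$. Because $\sigma$ is a ring involution of $\mc A$, it suffices to check invariance of the generators of each expression at a single point $(\mu,\nu) \in \mf t^* \times \mf t$.

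First, for ${\bf F}_k$. By definition, $(\sigma f)(\mu,\nu)=f({}^\sigma\nu,{}^{\sigma^{-1}}\mu)$. Hence
\beq
(\sigma{\bf F}_k)(\mu,\nu)=\sum_{w\in W}\sgn(w)\lip {}^{w\sigma}\nu,{}^{\sigma^{-1}}\mu\rip^k .
\eeq
Since $\sigma$ is $W$-equivariant, ${}^{w\sigma}\nu={}^\sigma({}^w\nu)$. The symmetry $\lip {}^\sigma\nu_1,\nu_2\rip=\lip {}^\sigma\nu_2,\nu_1\rip$ then gives
\beq
\lip {}^\sigma({}^w\nu),{}^{\sigma^{-1}}\mu\rip=\lip \mu,{}^w\nu\rip .
\eeq
By $W$-invariance of the pairing this equals $\lip {}^{w^{-1}}\mu,\nu\rip$. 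Reindexing $w\mapsto w^{-1}$, and using $\sgn(w^{-1})=\sgn(w)$, recovers ${\bf F}_k(\mu,\nu)$. So ${\bf F}_k$ is $\sigma$-invariant.

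Second, for ${\bf F}_k'$. Since ${\bf F}_k={\bf F}_k'\cdot d d^\vee$ and $\mc A$ is a domain, it suffices to show $\sigma(d d^\vee)=d d^\vee$. By the definition of $\sigma$ on generators, $\sigma$ sends a linear form $\alpha\in\mf t^*\subset\mc A$ to the linear form $(\mu,\nu)\mapsto\lip\alpha,{}^{\sigma^{-1}}\mu\rip$, which lies in $\mf t$, namely ${}^{\sigma^{-1}}\alpha$. Likewise $\sigma$ sends $\alpha^\vee\in\mf t$ to ${}^\sigma\alpha^\vee\in\mf t^*$. So $\sigma(d)=\prod_{\alpha>0}{}^{\sigma^{-1}}\alpha$ and $\sigma(d^\vee)=\prod_{\alpha>0}{}^{\sigma}\alpha^\vee$. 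With the Killing form, ${}^{\sigma^{-1}}\alpha$ is a positive multiple of $\alpha^\vee$: indeed $\alpha^\vee=2\,{}^{\sigma^{-1}}\alpha/K^\vee(\alpha,\alpha)$. Likewise ${}^\sigma\alpha^\vee=2\alpha/K^\vee(\alpha,\alpha)$. Therefore
\beq
\sigma(d)=c\, d^\vee, \qquad \sigma(d^\vee)=c^{-1} d, \qquad c=\prod_{\alpha>0}\tfrac{K^\vee(\alpha,\alpha)}{2},
\eeq
and the constants cancel in the product, giving $\sigma(d d^\vee)=d d^\vee$.

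The main obstacle is the bookkeeping in this second step: checking that the constants from $\sigma(d)$ and $\sigma(d^\vee)$ are exactly reciprocal. If $\mf g$ is merely reductive rather than semisimple, the Killing form is degenerate on the center, so $\sigma$ must be interpreted via a chosen nondegenerate invariant form. The argument above is unchanged under that interpretation.
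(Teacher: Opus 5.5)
Your proof is correct. The computation for ${\bf F}_k$ is the same as the paper's: $W$-equivariance of $\sigma$, the symmetry $\lip {}^\sigma\nu_1,\nu_2\rip=\lip {}^\sigma\nu_2,\nu_1\rip$, and reindexing $w\mapsto w^{-1}$. Like the paper, you then reduce ${\bf F}_k'$ to showing $\sigma(d\,d^\vee)=d\,d^\vee$. Where you differ is in how you show that $\sigma$ interchanges $d$ and $d^\vee$ up to reciprocal scalars.

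The paper argues abstractly. The polynomial $\nu\mapsto d^\vee({}^\sigma\nu)$ is anti-$W$-invariant of degree $N$ on $\mf t$. Degree-$N$ anti-invariants are multiples of $d$, by the same divisibility result behind Proposition~\ref{ddividesfk}, so $d^\vee({}^\sigma\nu)=c\,d(\nu)$. Substituting $\nu={}^{\sigma^{-1}}\mu$ then gives $d^\vee(\mu)=c\,d({}^{\sigma^{-1}}\mu)$ with the same $c$, and the constants cancel.

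You instead use the explicit relation $\alpha^\vee=2\,{}^{\sigma^{-1}}\alpha/K^\vee(\alpha,\alpha)$ root by root. Your route needs the Killing-form description of coroots but no invariant theory. It shows that $\sigma$ matches the linear factors of $d$ and $d^\vee$ individually, and it produces $c=\prod_{\alpha>0}K^\vee(\alpha,\alpha)/2$ explicitly. The paper's route needs only $W$-equivariance of $\sigma$ and never identifies $c$.

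The step you flag as the main obstacle is in fact automatic. Since $\sigma$ is an involution of $\mc A$, applying it to $\sigma(d)=c\,d^\vee$ yields $d=c\,\sigma(d^\vee)$, so the reciprocal constant comes for free. This is exactly what the paper's substitution exploits.
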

 
 \begin{proof}
 For $\nu \in \mf t$ and $\mu \in \mf t^*$ we have
 \beq
 \begin{split}
 F_k({}^{\sigma}\nu,{}^{\sigma^{-1}}\mu) &= \sum_w \sgn(w) \lip {}^{w\sigma}\nu, {}^{\sigma^{-1}} \mu \rip^k \\
 &= \sum_w \sgn(w) \lip {}^\sigma \nu, {}^{\sigma^{-1} w^{-1}} \mu \rip^k \\
 &= \sum_w \sgn(w) \lip {}^{w^{-1}}\mu,\nu \rip^k    \\
 &= F_k(\mu,\nu).\\
 \end{split}
 \eeq
 (By definition $\lip {}^w \mu,\nu \rip= \lip \mu,{}^{w^{-1}} \nu \rip$ for all $w \in W$.)
 
 The function $\nu \mapsto d^\vee({}^\sigma \nu)$ is a degree $N$ anti-$W$-invariant polynomial, and therefore it is a constant multiple of $d(\nu)$, say
 \beq
 d^\vee({}^\sigma \nu)=c d(\nu)
 \eeq
 for some $c \neq 0$. Hence also $d^\vee(\mu)=c d({}^{\sigma^{-1}} \mu)$; this gives
 \beq
 d^\vee({}^\sigma \nu)d({}^{\sigma^{-1}}\mu)=d^\vee(\mu) d(\nu).
 \eeq
 Finally, 
 \beq
 \begin{split}
 F_k'({}^{\sigma}\nu,{}^{\sigma^{-1}}\mu) &=\frac{ F_k({}^{\sigma}\nu,{}^{\sigma^{-1}}\mu)}{ d^\vee({}^\sigma \mu)d({}^{\sigma^{-1}}\mu)} \\
				&= F_k'(\mu,\nu). \\
				\end{split}
				\eeq

 \end{proof}

 Here now is a method for explicitly computing the ${\bf F}_k$. First, find a basis $b_1, \ldots, b_\ell$ of $W$-invariant polynomials on $\mf t$, of degree $k-N$.  
 For $1 \leq i \leq j \leq \ell$, let $\beta_{ij}$ be the polynomial defined by
 \beq
 \beta_{ij}(\mu,\nu)=b_i(\nu)b_j({}^\sigma \mu)+ b_j(\nu)b_i({}^\sigma \mu)
 \eeq
 The $\beta_{ij}$ form a basis of polynomials on $\mf t^* \oplus \mf t$, which are doubly homogeneous, $W$-invariant of degree $k-N$, and $\sigma$-invariant. 
 Therefore ${\bf F}_k'$ is some linear combination of the $\beta_{ij}$, the coefficients of which can be determined by a few calculations.

 \section{Example: $\mf {sl}_3(\C)$}\label{example2}
 
 In this section we determine ${\bf P}_2, {\bf P}_3, {\bf P}_4,{\bf E}_2, {\bf E}_3$, and ${\bf E}_4$ for  $\mf g = \mf{sl}_3(\C)$.

 \subsection{Roots and Weights}
 
 Write $\mf t$ for the diagonal Cartan subalgebra of $\mf g$. Identify $\mf t$ with the vectors $v=(v_1,v_2,v_3) \in \C^3$ with $v_1+v_2+v_3=0$. For $1 \leq i \leq 3$, let $\hat e_i \in \mf t^*$ be the functional given by $\hat e_i(v)=v_i$.
 (Thus $\hat e_1+\hat e_2+\hat e_3=0$.)
 A positive system of roots for $\mf t$ in $\mf g$ is given by
 \beq
 R^+=\{\alpha_1,\alpha_2,\alpha_1+\alpha_2\},
 \eeq
 with $\alpha_1=\hat e_1-\hat e_2$ and $\alpha_2=\hat e_2-\hat e_3$. Thus $\delta=\hat e_1-\hat e_3=2\hat e_1+\hat e_2$.

 Fundamental   weights for $\mf t$ are  given by $\varpi_1=\hat e_1$ and $\varpi_2=\hat e_1+\hat e_2$.
 For integers $m,n \geq 0$, write $\phi_{m,n}$ for the irreducible representation of $\mf g$ whose highest weight is $m \varpi_1+n \varpi_2$.
 Then
 \beq
 \deg \phi_\la=\half (m+1)(n+1)(m+n+2).
 \eeq


The Killing form  $K$ on $\mf t \subset \C^3$ is $6$ times the restriction of the standard bilinear form (dot product), and the inverse form $K^\vee$ on $\mf t^*$
is determined by 
 \beq
 K^\vee(\hat e_i,\hat e_i)=\frac{1}{9}, \: \: K^\vee(\hat e_i,\hat e_j) =-\frac{1}{18} \text{ for $i \neq j$}.
 \eeq


 Corollary \ref{p2} gives $P_2(\phi_{m,n})$: We have 
 \beq
 \begin{split}
 P_2(\phi_{m,n}) &= \frac{\deg \phi_\la \lip \la+2 \delta,\la \rip}{8} q_2 \\
 &=\frac{1}{12}(m+1)(n+1)(m+n+2)(m^2+mn+n^2+3m+3n)(\hat e_1^2+\hat e_1\hat e_2+\hat e_2^2). \\
 \end{split}
 \eeq

\subsection{$F_6$ and $F_7$}
The crux of the computation of  $P_3(\phi_{m,n})$  is determining $F_6$. Now, the function $(\mu,\nu) \mapsto F_6(\mu,\nu)$ is antisymmetric in both $\mu$ and $\nu$. Therefore it is divisible by $d_\mu \cdot d_\nu$, and the quotient $F_6'(\mu,\nu)$ is $W \times W$-symmetric, i.e., ${\bf F}_6' \in \mc A^{W \times W}$. More precisely, for degree reasons, ${\bf F}_6' \in (\Sym^3 \mf t)^W \otimes (\Sym^3 \mf t^*)^W$. 

From character theory, say, we compute  that the subspace of $W$-invariant polynomials in $\Sym^3 \mf t$ is $1$-dimensional.
If we set $e_i'=e_i-\frac{1}{3}(e_1+e_2+e_3)$ for $1 \leq i \leq 3$, then $q_3^\vee=e_1'e_2'e_3'$ generates $(\Sym^3 \mf t)^W$.
Under the isomorphism $\sigma$ coming from the Killing form, we have $\sigma(e_i')=6\hat e_i$. So we  take $q_3=6^3\hat e_1\hat e_2\hat e_3 \in (\Sym^3 \mf t^*)^W$ as a basis.  
 
  Hence $q_3 \cdot q_3^\vee$ is a basis of $(\Sym^3 \mf t^*)^W \otimes (\Sym^3 \mf t)^W$. In particular, ${\bf F}_6'$ is a multiple of $q_3 \cdot q_3^\vee$, i.e.,
 \beq
 F_6(\mu,\nu)=c q_3(\nu) q_3^\vee(\mu) d^\vee(\mu) d(\nu),
 \eeq
 for some constant $c$ to be determined. We only need to compute both sides at values of $\mu$ and $\nu$ so that neither side is $0$.

 Let $\mu_0=3\hat e_1-\hat e_2-2\hat e_3$, and $\nu_0=\diag(3,-1,-2)$. Then $d^\vee(\mu_0)=d(\nu_0)=20$ and $q_3(\nu_0)=6^4$  and $q_3^\vee(\mu_0)=6$.
 $F_6(\mu_0,\nu_0)=5^2 \times 6^6$. Hence
 \beq
 c=\frac{5^2 \times 6^6}{20^2 \times 6^5}=\frac{3}{8},
 \eeq
 and so
\beq
 F_6(\mu,\nu)=\frac{3}{8} q_3(\mu) q_3^\vee(\nu) d^\vee(\mu) d(\nu).
 \eeq

Similarly, $\left(\Sym^4 \mf t\right)^W$ is $1$-dimensional, and proceeding as above gives
\begin{align*} 
F_7(\mu,\nu)=\frac{63}{16} q_2(\nu)^2q_2^\vee(\mu)^2 d(\nu) d^\vee(\mu). 
\end{align*}

Next,
\beq
  P_3(\phi_\la)=\dfrac{ F_6(\la+\delta)-\deg \phi_\la \cdot F_6(\delta) -15 P_2(\phi_\la) \cdot F_4(\delta)}{20 F_3(\delta)}.
\eeq

 \subsection{$F_8$ and $F_9$}
 
 By \cite[Table 1, page 59]{RGCG}, we have $\mb S^W=\C[q_2,q_3]$.
 For instance, the space of degree $5$ invariants is also $1$-dimensional, spanned by $q_2q_3$.
 Hence there is a constant $c \in \C$ so that
 \beq
 F_8(\mu,\nu)=c q_2^\vee(\mu)q_3^\vee(\mu)q_2(\nu) q_3(\nu)d(\nu) d^\vee(\mu).
 \eeq
 Plugging in $\mu_0$ and $\nu_0$ again computes this constant, and we obtain
  \beq
 {\bf F}_8 =\frac{3}{4} q_2^\vee q_3^\vee q_2  q_3 d d^\vee.
 \eeq

 The space of degree $6$ invariants is $2$-dimensional, spanned by $q_2^3$ and $q_3^2$.
 Hence there are constants $a_1,a_2,a_3$ so that 
 \beq
 {\bf F}_9=\left(a_1q_2^3 q_2^{\vee 3}+a_2 (q_2^3 q_3^{\vee 2}+ q_3^2 q_2^{\vee 3})+a_3q_3^2q_3^{\vee 2}\right)d  d^\vee.
 \eeq
 These constants can be determined by plugging in three pairs $\mu,\nu$ in general position, and doing so gives
 \beq
{\bf F}_9=\dfrac{3}{64}\left( 85 q_2^3  q_2^{\vee 3} -(q_2^3 q_3^{\vee 2}+q_3^2 q_2^{\vee 3} )+q_3^2q_3^{\vee 2}  \right) d d^\vee.
\eeq

 
 \subsection{Summary}

 To summarize:
 
 \begin{itemize}
 \item ${\bf F}_3=3d d^\vee$.
 \item ${\bf F}_4=0$.
 \item ${\bf F}_5=\frac{5}{4} q_2 q_2^\vee \cdot {\bf F}_3$.
 \item ${\bf F}_6= \frac{1}{8} q_3q_3^\vee  \cdot {\bf F}_3$.
 \item ${\bf F}_7=\frac{21}{16} q_2^2(q_2^\vee)^2  \cdot {\bf F}_3$.
\item ${\bf F}_8=\frac{1}{4} q_2 q_2^\vee q_3 q_3^\vee \cdot {\bf F}_3$.
 \item ${\bf F}_9=\frac{1}{64}\left[85 q_2^3 (q_2^\vee)^3 -  (q_2^3 q_3^{\vee 2}+q_3^2 q_2^{\vee 3} ) + q_3^2 (q_3^\vee)^2 \right]{\bf F}_3$.
 \end{itemize}

For the highest weights $\la= m \varpi_1 + n \varpi_2 = (m+n, n , 0)$, this gives:

   \begin{itemize}
\item    $ P_0(\phi_{m,n})= \deg \phi_\la = \half (m+1)(n+1)(m+n+2)$,
\item $ P_1(\phi_{m,n})=0$
\item $		 P_2(\phi_{m,n})=\frac{1}{6}\deg \phi_\la(m^2+mn+n^2+3m+3n)(\hat e_1^2+\hat e_1\hat e_2+\hat e_2^2),$
\item $ P_3(\phi_\la)= -\frac{1}{20} \deg \phi_\la(2m+n+3)(m+2n+3)(m-n)
		\hat e_1\hat e_2(\hat e_1+\hat e_2)$,
		\end{itemize}
		and
  \beq
	 P_4(\phi_{m,n})=\frac{1}{30} \deg(\phi_\la)(m^2+mn+n^2+3m+3n)(2(m^2+mn+n^2+3m+3n)-3)(\hat e_1^2+\hat e_2^2+\hat e_1\hat e_2)^2. \eeq
 
 The first few elementary symmetric functions $E_i$ are then
 \begin{itemize}
 	\item $E_1(\phi_{m,n}) = 0$,
 	\item $E_2(\phi_{m,n})= -\frac{1}{12}  P_0(\phi_{m,n})(m^2+mn+n^2+3m+3n) (\hat e_1^2+\hat e_1\hat e_2+\hat e_2^2)$,
 	\item $E_3(\phi_{m,n}) = \frac{1}{60} P_0(\phi_{m,n})(2m+n+3)(m+2n+3)(n-m) \hat e_1\hat e_2(\hat e_1+\hat e_2)$,
 	\item $E_4(\phi_{m,n}) =\frac{1}{8}( P_2(\phi_{m,n})^2 -2  P_4(\phi_{m,n}))$.
 \end{itemize}

 \section{Lie Groups} \label{cc}
 Now let $G$ be a complex connected reductive Lie group, with maximal torus $T$. 
 Write $\mf g$ and $\mf t$ for the Lie algebras of $G$ and $T$. Every Lie group $G$ has a classifying space, written $BG$. Briefly, this is a paracompact space which admits a contractible $G$-bundle. 
  Any two classifying spaces for $G$ are homotopy equivalent. When $G$ is discrete, classifying spaces are the same as the Eilenberg-Maclane spaces written $K(G,1)$ as in \cite{hatcher}.

 \subsection{The Algebra $\mb X$}
  Being  a complex torus, $T$   is  isomorphic to the $r$-fold product $(\C^\times)^r$ for some $r \geq 1$. The infinite complex projective space $\mb{CP}^{\infty}$
  is a classifying space for $\C^\times$, and so its $r$-fold product can be taken as $BT$. Write $X^*(T)$ as usual for the algebraic  homomorphisms from $T$ to $\C^\times$. (These are the irreducible algebraic representations of $T$.)
   
By Proposition \ref{first.cc.iso} in the Appendix, the first Chern class gives
 a bijection  \begin{equation} \label{c1.tori}
  X^*(T) \overset{\sim}{\to}  \HH^2(BT,\Z).
 \end{equation}
  
  \begin{defn}
  Put $\mathbb X=\Sym_{\Z} X^*(T)$.  
  \end{defn}
  
 Then \eqref{c1.tori} extends to a ring isomorphism 
 \begin{equation}\label{psiprime}  
\psi: \mb X \overset{\sim}{\to} \HH^{*}(BT,\Z).
 \end{equation} 
Differentiation $X^*(T) \to \mf t^*$   induces injections $X^*(T) \hookrightarrow \Lambda$, 
 and $\mb X \hookrightarrow \mb S_{\Lambda}$. These are isomorphisms when $G$ is semisimple and simply connected. (The image of $X^*(T)$ is the lattice `$\Lambda$' of \cite[Theorem 23.16]{Fulton}.)
Also put
 \beq
 Q_2=\psi(q_2)=\sum_{\alpha \in R} c_1(\alpha)^2.
 \eeq

 \subsection{Chern Classes} \label{jan.29.cc}
 
 The inclusion $T \hookrightarrow G$  gives rise to a restriction map $\HH^*(BG,\Z) \to \HH^*(BT,\Z)$ on cohomology.  Let $\pi$ be a representation of $G$, and write $d \pi$ for its differential, i.e., the corresponding Lie algebra representation. Write $c(\pi) \in \HH^*(BG,\Z)$ for the total Chern class (CC) of $\pi$. (See \cite[Section 2.6]{Benson}.)
 The $k$th graded piece is denoted $c_k(\pi)$. Chern class satisfy the following well-known properties:
 \begin{enumerate}
 \item If $\pi$ is a representation of $G$ and $\varphi: G' \to G$ is a homomorphism, then $\varphi^*(c(\pi))=c(\pi \circ \varphi)$, where $\varphi^*$ is the induced map on cohomology.
 \item If $\pi_1$ and $\pi_2$ are representations, then $c(\pi_1 \oplus \pi_2)=c(\pi_1) \cup c(\pi_2)$.
 \item If $k>\deg \pi$, then $c_k(\pi)=0$.
 \end{enumerate}
 
 \begin{defn} Let  $c^T(\pi) \in \HH^*(BT,\Z)$ be the restriction of $c(\pi)$ to $T$. 
 \end{defn}
 This is the total CC of the restriction of $\pi$ to $T$.  When the restriction map $\HH^*(BG,\Z) \to \HH^*(BT,\Z)$ is injective, computing $c^T(\pi)$   determines $c(\pi)$ as well. According to \cite{toda}, this happens for $G=\GL_n(\C)$, $\SL_n(\C)$, and $\Sp_{2n}(\C)$. This gives a convenient way to describe CCs, since the cohomology ring of $BT$ is polynomial.

Given a representation $\pi$ of $G$, write $d\pi$ for the corresponding Lie algebra representation of $\mf g$. Since every weight of $d \pi$ is the derivative of a weight of $\pi$, we know $P_k(d \pi)$ and $E_k(d\pi)$ are in $\mb X^k \subseteq {\mb S}^k_\Lambda$ for all $k$.

\begin{prop} \label{psi}  We have $c^T(\pi)=\psi(E(d\pi))$.
\end{prop}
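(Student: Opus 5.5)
The plan is to reduce to characters of $T$ by combining the splitting of $\pi|_T$ with the naturality and Whitney sum properties of Chern classes.

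First, by property (1) applied to the inclusion $T \hookrightarrow G$, $c^T(\pi)$ is the total Chern class of the restricted representation $\pi|_T$. Since $T$ is a complex torus and $\pi$ is algebraic, $\pi|_T$ is diagonalizable. It therefore splits as a direct sum $\bigoplus_\chi \chi^{\oplus m(\chi)}$ of one-dimensional characters $\chi \in X^*(T)$, where $m(\chi)$ is the multiplicity of $\chi$ in $\pi|_T$. By the Whitney sum property (2),
\beq
c^T(\pi)=\prod_{\chi} c(\chi)^{m(\chi)}.
\eeq
For a one-dimensional representation, property (3) gives $c(\chi)=1+c_1(\chi)$. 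By the definition of $\psi$, extending \eqref{c1.tori}, we have $c_1(\chi)=\psi(\chi)$. Because $\psi$ is a ring homomorphism, this yields
\beq
c^T(\pi)=\psi\Big(\prod_\chi (1+\chi)^{m(\chi)}\Big),
\eeq
with the product computed in $\mb X$.

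Second, I will match this with $E(d\pi)$ from Definition \ref{e.defn.Here}. The weights of $d\pi$ on $\mf t$ are exactly the differentials $d\chi$ of the characters $\chi$ occurring in $\pi|_T$, with the same multiplicities. Here I use that differentiation $X^*(T)\to\Lambda$ is injective, so distinct characters give distinct weights and $m_{d\pi}(d\chi)=m(\chi)$. Under the identification $\mb X \hookrightarrow \mb S_\Lambda$ induced by differentiation, the product $\prod_\chi(1+\chi)^{m(\chi)}$ is therefore carried to $\prod_\mu (1+\mu)^{m_{d\pi}(\mu)}=E(d\pi)$. Since $E(d\pi)$ already lies in $\mb X$, the expression $\psi(E(d\pi))$ makes sense, and the claim follows.

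The main obstacle is bookkeeping rather than real difficulty. We must make sure $\psi$ is applied through the identification of $\mb X$ with its image in $\mb S_\Lambda$. We must also check that the sign and normalization convention for $c_1$ in \eqref{c1.tori}, from Proposition \ref{first.cc.iso}, is the same one used to define $\psi$, so that $c_1(\chi)=\psi(\chi)$ holds on the nose rather than up to sign.
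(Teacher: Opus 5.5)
Your proposal is correct and takes essentially the same approach as the paper: restrict to $T$, split $\pi|_T$ into characters with multiplicities, apply the Whitney sum formula with $c(\chi)=1+c_1(\chi)$, and identify $\prod_\chi(1+\chi)^{m_\pi(\chi)}$ with $E(d\pi)$ inside $\mb X$. The sign-convention check you flag is automatic, because $\psi$ is defined as the ring extension of the first Chern class map \eqref{c1.tori}, so $\psi(\chi)=c_1(\chi)$ holds by definition.
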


\begin{proof}   For  $\chi \in X^*(T)$, let $m_\pi(\chi)$ be the multiplicity of $\chi$ in $\pi$. Viewing $E(d \pi)$ in $\mb X$, we write
 \beq
E(d\pi) = \prod_{\chi}(1+\chi)^{m_\pi(\chi)},
 \eeq
and so
\beq
\psi(E(d \pi)) = \prod_{\chi}(1+c_1^T(\chi))^{m_\pi(\chi)}.
\eeq
On the other hand, since $\pi|_T=\sum_\chi \chi^{\oplus m_\pi(\chi)}$, we have $c^T(\pi)=  \prod_{\chi} c^T(\chi)^{m_\pi(\chi)}$.
The result follows since $c^T(\chi)=1+c_1^T(\chi)$.
\end{proof}

 \begin{remark} This formulation is  close to \cite[Theorem 10.3(a)]{Borel.Hirz.58}.
 \end{remark}
 
Since $P_2(\ad)=q_2$ from Section \ref{PS.prelim}, we have
 \begin{equation}
 c_2^T(\Ad)=-\half Q_2.
 \end{equation}

   \begin{thm} \label{poly.thm} For all nonnegative integers $k$, the function $\lambda \mapsto c_k^T(\pi_\la) \in \mb X^k$ is polynomial in $\la$.
   We have $\deg {\bf c}_k \leq k(N+1)$.
When $G$ is semisimple,  $\deg {\bf c}_k \leq \left[ \frac{k}{2} \right] N +k$.
  \end{thm}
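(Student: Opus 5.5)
The plan is to reduce everything to Proposition \ref{psi} and Proposition \ref{p.poly}. By Proposition \ref{psi}, for each dominant $\la$ we have $c^T(\pi_\la)=\psi(E(d\pi_\la))$. The differential $d\pi_\la$ is the irreducible representation $\phi_\la$ of $\mf g$ with the same highest weight (identifying $X^*(T)$ with its image in $\Lambda$ via differentiation). Taking graded pieces gives $c_k^T(\pi_\la)=\psi(E_k(\phi_\la))=\psi({\bf E}_k(\la))$.

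So I would define ${\bf c}_k$ as the composition of ${\bf E}_k$ with $\psi$, extended $\C$-linearly. Concretely, $\psi\otimes\C$ is a linear isomorphism $\mb X^k\otimes\C = \mb S^k \to \HH^{2k}(BT,\Z)\otimes\C$; here I use that $\mb X\otimes\C=\mb S$, since $X^*(T)$ spans $\mf t^*$. Then ${\bf c}_k=(\psi\otimes\C)\circ{\bf E}_k$, regarded as an element of $\HH^{2k}(BT,\C)\otimes\Sym(\mf t)$. Composing a polynomial map with a linear map is again polynomial, in the sense of Section \ref{poly.prelim}. Moreover the degree in $\la$ is unchanged, because the linear map acts only on the $\mb S^k$ factor. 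Hence $\deg{\bf c}_k=\deg{\bf E}_k$.

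The degree bounds then follow directly from Proposition \ref{p.poly}. In general $\deg{\bf E}_k\le k(N+1)$, and when $G$ is semisimple (so $\mf g$ is semisimple) $\deg{\bf E}_k\le [k/2]N+k$.

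The only point needing care is to check that the polynomial is genuinely defined on $X^*(T)\otimes\C$ and agrees with $\la\mapsto c_k^T(\pi_\la)$ on dominant characters. This holds because the dominant characters of $T$ are dominant weights in $\Lambda^+$, and the definition of ${\bf E}_k$ already guarantees agreement on $\Lambda^+$. There is no real obstacle beyond this bookkeeping.
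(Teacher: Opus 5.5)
Your proposal is correct and follows essentially the same route as the paper. You combine $c_k^T(\pi_\la)=\psi(E_k(\phi_\la))$ from Proposition \ref{psi} with the polynomiality of ${\bf E}_k$, and read off the degree bounds from Proposition \ref{p.poly}. The only difference is that the paper re-traces that polynomiality through \eqref{recursive.pk} and \eqref{better.cn.formula}, whereas you cite it from the definition of ${\bf E}_k$ and make the complexification of $\psi$ explicit.
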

  
  \begin{proof} The function $\lambda \mapsto F_k(\la)$ is  polynomial by definition of $F_k$.
 Inductively we see from \eqref{recursive.pk}  that $\lambda \mapsto   P_k(\phi_\la)$ is polynomial.
  Next, from \eqref{better.cn.formula} we find that $E_k(\phi_\la)$ is polynomial in $\la$.
  But now $c_k^T(\pi_\la)=\psi(E_k( \phi_\la))$ by  Proposition \ref{psi}, hence it is polynomial.
  The degree assertions follow from Proposition \ref{p.poly}.
  
  
  \end{proof}
  
Let ${\bf c}_k \in \mc A$ be the resulting polynomial; it is the degree $k$ polynomial with values in $\mb X \cong \HH^{2k}(BT,\Z)$ with the property that
${\bf c}_k(\la)=c^T_k(\pi_{\la})$ for all $\la \in \Lambda^+$. By the above   ${\bf c}_k=\psi \circ {\bf E}_k$.
  
 
 
  \begin{prop} \label{c^2.doesn't.vanish} Suppose $G$ is semisimple. A representation $\pi$ of $G$ is trivial iff $c_2^T(\pi)=0$.
\end{prop}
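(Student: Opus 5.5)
The plan is to reduce the statement to Proposition \ref{k.even.p.van} with $k=2$. One direction is immediate: if $\pi$ is trivial, every weight of $d\pi$ is zero, so $E(d\pi)=1$ and $c_2^T(\pi)=0$ by Proposition \ref{psi}.

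For the converse, suppose $c_2^T(\pi)=0$. By Proposition \ref{psi} we have $c_2^T(\pi)=\psi(E_2(d\pi))$, and $\psi$ is a ring isomorphism $\mb X \to \HH^*(BT,\Z)$ by \eqref{psiprime}. Hence $E_2(d\pi)=0$ in $\mb X^2 \subseteq \mb S^2_\Lambda$. Apply \eqref{better.cn.formula} with $n=2$:
\beq
2E_2(d\pi)=P_1(d\pi)E_1(d\pi)-P_2(d\pi).
\eeq
Since $G$ is semisimple, $\mf g$ is semisimple. Then every representation of $\mf g$ has trace zero, so $P_1(d\pi)=0$ as noted in Section \ref{PS.prelim}. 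We conclude $P_2(d\pi)=-2E_2(d\pi)=0$.

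Proposition \ref{k.even.p.van} then says $d\pi$ is the trivial representation of $\mf g$. The last step is to pass from $d\pi$ back to $\pi$. Since $G$ is connected, it is generated by the image of the exponential map. Because $d\pi=0$, we get $\pi(\exp X)=\exp(d\pi(X))=1$ for all $X\in\mf g$, so $\pi$ is trivial. Alternatively, one can argue directly on the torus: all weights of $\pi|_T$ in $X^*(T)$ are zero, since $X^*(T)\hookrightarrow\Lambda$. The highest weights of the irreducible constituents of $\pi$ are then all zero, so each constituent is trivial, and by complete reducibility $\pi$ is trivial.

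There is no real obstacle here. The only points to check are that $\psi$ is injective, which holds because it is an isomorphism, and that the argument of Proposition \ref{k.even.p.van} applies. That argument uses the fact that a sum of squares in the integral polynomial ring $\mb S_\Lambda$ vanishes only when each term vanishes, and it applies verbatim since $E_2(d\pi)$ lies in $\mb S_\Lambda$.
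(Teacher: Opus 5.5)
Your proposal is correct and follows essentially the same route as the paper: use Proposition~\ref{psi} and the injectivity of $\psi$ to get $E_2(d\pi)=0$, use $P_1(d\pi)=0$ for semisimple $\mf g$ to get $P_2(d\pi)=0$, and then conclude by Proposition~\ref{k.even.p.van} that $d\pi$, hence $\pi$, is trivial. Your identity $2E_2(d\pi)=-P_2(d\pi)$ is the correct one; the paper's proof writes $E_2(d\pi)=-P_2(d\pi)$, which is a harmless slip. You also spell out the passage from $d\pi$ to $\pi$ via connectedness of $G$, which the paper leaves implicit.
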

 
 \begin{proof}
 Since $G$ is semisimple, $E_1(d \pi)=0$ and so $E_2(d \pi)=-P_2(d \pi)$.
  Therefore if $c_2^T(\pi)=0$, then $P_2(d \pi)=0$, so $d \pi$ is trivial by Proposition \ref{k.even.p.van}. Hence $\pi$ is also trivial. 
  \end{proof}

 To illustrate, we transform the formulas for various $E_k(\phi)$ in the previous section into formulas for CCs.
 
 \begin{example} \label{sl2.chern.ex} Let $G=\SL_2(\C)$. The character $\chi_1 \in X^*(T)$ given by $\chi_1 \left( \diag(a,a^{-1}) \right)=a$ has derivative $\mu_0 \in \mf t^*$.
 Also write $e=c_1(\chi_1) \in \HH^*(BT,\Z)$ for the corresponding cohomology class.
 
 Let $\pi_\ell$ be the irreducible representation of $G=\SL_2(\C)$ of degree $\ell+1$, then 
 \begin{equation} 
{\bf P}_k(\ell)=\frac{2^{k+1}}{k+1} \cdot B_{k+1} \left(\frac{\ell+2}{2} \right) e^k   \in   \HH^{2k}(BT,\Z).
\end{equation}

For  $k=2$, this gives ${\bf c}_2(\ell)={\bf E}_2(\ell)=-\binom{\ell+2}{3}e^2$.
Similarly, 
\beq
{\bf c}_4(\ell)=\frac{1}{3} \binom{\ell+2}{5}(5 \ell+2) e^4.
\eeq
 
 \end{example}

  \begin{thm} \label{c_2.simp} When $\mf g$ is simple, we have
\beq
{\bf c}_2=-\left( \frac{\tau(q_2^\vee)-q_2^\vee(\delta)}{2 \dim \mf g} \right)Q_2 \cdot {\bf P}_0.
\eeq
 
  \end{thm}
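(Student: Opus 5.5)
The plan is to combine Corollary \ref{p2} with the Newton identity \eqref{better.cn.formula} and then transport the result to cohomology via $\psi$. By definition, ${\bf c}_2=\psi\circ {\bf E}_2$. This comes from Proposition \ref{psi} together with the identification of ${\bf c}_k$ as the polynomial interpolating $c_k^T(\pi_\la)$ on $\Lambda^+$. So it suffices to compute ${\bf E}_2$ as an $\mb S^2$-valued polynomial and then apply $\psi$ coefficientwise.

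First, specialize \eqref{better.cn.formula} to $n=2$. This gives $2{\bf E}_2={\bf P}_1{\bf E}_1-{\bf P}_2$, and we also have ${\bf E}_1={\bf P}_1$. Since $\mf g$ is simple, Corollary \ref{p2} gives ${\bf P}_1=0$, hence
\beq
{\bf E}_2=-\tfrac12 {\bf P}_2=-\left(\frac{\tau(q_2^\vee)-q_2^\vee(\delta)}{2\dim \mf g}\right) q_2\,{\bf P}_0 .
\eeq
These are identities of polynomials in $\la$. They hold on the Zariski-dense set $\Lambda^+$, since there they are the honest Newton identities for the weight multiset of $\phi_\la$. Hence they hold identically in $\mc A$.

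Second, apply $\psi$. The scalar factor $\frac{\tau(q_2^\vee)-q_2^\vee(\delta)}{2\dim\mf g}\,{\bf P}_0$ lies in $\mb S^*$, i.e. it is a $\C$-valued function of $\la$. At each dominant $\la$ it evaluates to a rational number times $\deg\pi_\la$. Meanwhile $q_2=P_2(\ad)=\sum_{\alpha\in R}\alpha^2$ lies in $\mb X^2$, and by definition $\psi(q_2)=Q_2$. Since $\psi$ is a ring map, it is $\Q$-linear, so for each dominant $\la$ we get
\[
c_2^T(\pi_\la)=\psi({\bf E}_2(\la))=-\frac{\lip\la+2\delta,\la\rip}{2\dim\mf g}\deg\pi_\la\, Q_2 .
\]
Reading this as an identity of $\HH^4(BT,\Z)$-valued polynomials in $\la$ gives the stated formula for ${\bf c}_2$.

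The only delicate point is this last step. $\psi$ is defined on the integral lattice $\mb X$, not on $\mb S$, so one cannot literally apply $\psi$ to a complex scalar multiple of $q_2$. The fix is to argue pointwise on $\Lambda^+$. There ${\bf E}_2(\la)\in\mb X^2$, and it equals the rational multiple $r_\la q_2$ with $r_\la=-\lip\la+2\delta,\la\rip\deg\pi_\la/(2\dim\mf g)\in\Q$. Since $\mb X^2$ is torsion-free, $\psi(r_\la q_2)=r_\la Q_2$ makes sense after tensoring with $\Q$, and the resulting class is integral. Polynomiality in $\la$ then extends the identity to all of $\mf t^*$.

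As a consistency check, the result also recovers Theorem \ref{c2.thm} via $c_2^T(\Ad)=-\tfrac12 Q_2$.
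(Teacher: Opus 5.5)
Your proof is correct and is the paper's own argument: Corollary \ref{p2}, combined with the Newton identity $2{\bf E}_2={\bf P}_1^2-{\bf P}_2$ and ${\bf P}_1=0$, and then transported through $\psi$. Your pointwise justification for applying $\psi$ usefully spells out what the paper leaves implicit, but it should be carried out on $\Lambda^+\cap X^*(T)$ rather than on all of $\Lambda^+$, since ${\bf E}_2(\la)$ need not lie in $\mb X^2$ otherwise (for $\PGL_2(\C)$ with $\ell=1$ it is $-\mu_0^2\notin\mb X^2$); that smaller set is still Zariski dense, so your extension step goes through unchanged.
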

  
  \begin{proof} This follows from  Corollary \ref{p2}, and the combinatorial identity $2 {\bf E}_2={\bf P}_1^2- {\bf P}_2$.
  \end{proof}

 Another way of writing this is
  \beq
  c_2^T(\pi_\la)=\frac{|\la+\delta|^2-|\delta|^2}{\dim \mf g}  \cdot \deg \pi_\la  \cdot Q_2.
  \eeq
Since $c_2^T(\Ad)=-\half Q_2$, this is equivalent to Theorem \ref{c2.thm} in the Introduction.

 \begin{remark} Let $\hat \HH^*(BG,\Q)$ be the completed rational cohomology ring of $BG$. In our notation, the restriction of the Chern character  $\ch(\pi)$ \cite[page 109]{Weibel.K} to $T$ is given by
 \beq
 \ch^T(\pi)=\sum_{k=0}^\infty \frac{P_k(\pi)}{k!} \in \hat \HH^*(BT,\Q).
 \eeq
 \end{remark}

 \subsection{Isogeny}
 
 Suppose $p: G' \to G$ is an isogeny, and $T' <G'$ is the preimage of a maximal torus $T$ in $G$. Then $X^*(T)$ injects into $X^*(T')$, hence the induced map $\HH^*(BT,\Z) \to \HH^*(BT',\Z)$ is injective. Here $c^T(\pi) \mapsto c^{T'}(\pi \circ p)$. Therefore, if we are only interested in $c^T(\pi)$, it is enough to compute $c^{T'}(\pi)$ for simply connected $G'$.  
 
 \begin{example}  \label{pgl2.ex} Let $G=\PGL_2(\C)$. For $\ell$ even, let $\ol \pi_\ell$ be the representation of $G$ descended from the $\pi_\ell$ of Example \ref{sl2.chern.ex}.
 Let $T$ be the diagonal torus of $G$ and $T'$ the diagonal torus of $\SL_2$. 
 Also  let $\ul e=c_1(\ul \chi_1)$, where $\ul \chi_1(\diag(a,b))=ab^{-1}$. 
 Note that under $X^*(T) \to X^*(T')$, $\ul e \mapsto 2e$. 
 
 From the above we have
 \beq
 P_k(\ol \pi_\ell)= \frac{1}{2(k+1)} B_{k+1} \left(\frac{\ell+2}{2} \right) \ul e^k \in \mb X^k.
 \eeq
For the case of $k=2$ this gives
 \beq
 {\bf c}_2(\ell)=-\frac{1}{4} \binom{\ell+2}{3} \ul e^2,
 \eeq
 since this maps to $-\binom{\ell+2}{3} e^2$. 
 For the case of $k=4$ this gives
 \beq
 {\bf c}_4(\ell)=\frac{1}{48} \binom{\ell+2}{5} (5 \ell+12) \ul e^4.
 \eeq
 \end{example}
 
\begin{example} Let $G=\GL_2(\C)$. The diagonal torus $T$ of $G$ has its group of characters $X^*(T)$ generated by $e_1,e_2$, where $e_k$ projects to the $k$th diagonal entry. Using the same notation for their first Chern classes  gives $\HH^*(BT,\Z)=\Z[e_1,e_2]$,  The dominant weights of $G$ are then $\la_{m,n}=me_1+ne_2$ with $m \geq n \geq 0$.
 
 Write $\tilde G=\SL_2(\C) \times \C^\times$, and let $\tilde T$ be the product of the diagonal torus in $\SL_2(\C)$ with $\C^\times$. Then $\HH^*(B\tilde T,\Z)=\Z[e,t]$, where $t=c_1(\chi_1)$.
 Define an isogeny $\rho: \tilde G \to G$ by $\rho(A,z)=Az$. For the induced map $\rho^*$ on cohomology, we have $\rho^*(e_1)=e+t$ and $\rho^*(e_2)=-e+t$.
 
 For $m,n$ as above, let $\Pi_{m,n}$ be the irreducible representation of $G$ with highest weight $\la_{m,n}$. Then $\pi_{m,n} =\Pi_{m,n} \circ \rho$ is the external tensor product $\chi_{m+n} \boxtimes \pi_{m-n}$.  
 
  This gives $c_1(\pi_{m,n})=(m-n+1)(m+n)t$ and
  \beq
 c_2(\pi_{m,n})=-\binom{m-n+2}{3} e^2 + \binom{m-n+1}{2} (m+n)^2 t^2,
 \eeq
 
 Then $c_k^T(\Pi_{m,n})$ for $k=1,2$ are the unique cohomology classes with $\rho^*(c_k^T(\Pi_{m,n}))=c_k^T(\pi_{m,n})$. 
 Write ${\bf c}_k(m,n)=c_k^T(\Pi_{m,n})$. Some light calculation gives 
 \beq
 {\bf c}_1(m,n)=\half(m-n+1)(m+n)(e_1+e_2)
 \eeq
  and
 \beq
\begin{split}
{\bf c}_2(m,n) &=\left[\dfrac{(m-n+1)(m-n)(m+n)^2}{8}- \frac{1}{4} \binom{m-n+2}{3}\right](e_1^2+e_2^2) \\
		& +\left[\dfrac{(m-n+1)(m-n)(m+n)^2}{4}+  \half \binom{m-n+2}{3}\right](e_1e_2). \\
		\end{split}
\eeq
\end{example}

 \subsection{Synopsis of Chern Class Calculation} \label{summary.ch}
 
We summarize the above for the reader's convenience.
 
Let $G$ be a connected complex reductive group, with maximal torus $T$, and fix an integer $k \geq 0$. Let $N$ be the number of positive roots of $T$ in $G$. There is a polynomial ${\bf c}_k \in \mc A$, whose value at $\la \in \Lambda^+$ is the restriction of $c_k(\pi_\la)$ to $T$.
 In particular it takes values in $\HH^{2k}(BT,\Z)$, which we can view as the polynomial algebra $\Sym_\Z X^*(T)$.
 
It is necessary to compute the polynomials ${\bf F}_i  \in \mc A$ for $N \leq i \leq N+k$. This calculation is discussed in Section \ref{recursive.section}.  When the Lie algebra $\mf g$ is simple, Proposition \ref{F.calc} gives a formula for ${\bf F}_N$ and ${\bf F}_{N+2}$, and ${\bf F}_{N+1}=0$. For $i \geq N+3$, see Section \ref{fk.gen.sec} for a procedure. Note that if $-1 \in W$, then ${\bf F}_i=0$ unless $i \equiv N \mod 2$ by Proposition \ref{-1.Fk}.
 
 Next, the power sums ${\bf P}_i \in \mc A$ are computed  by the recursive formula \eqref{recursive.pk}.
 After this, one computes ${\bf E}_k \in \mc A$ using   \eqref{better.cn.formula}. 
 
 Irreducible representations of $\mf g$ coming from the Lie group $G$ correspond to highest weights in $X^*(T) \cap \Lambda^+$. 
 When ${\bf E}_k$ is restricted to $X^*(T)$, it takes values in ${\mb X}^k$. Composing this with $\psi$ defined by \eqref{psiprime} gives the Chern class, in other words
  ${\bf c}_k=\psi \circ {\bf E}_k$.
 
 \bigskip

 When $G$ is reductive, its Lie algebra $\mf g$ is the direct sum of its center $\mf z$ and semisimple Lie algebras $\mf g_i$. An irreducible representation of $\mf g$ decomposes into an external tensor product of irreducible representations of $\mf z$ and $\mf g_i$. 
 One may compute the $P_k$ from the formula \eqref{ratz}, and then carry out the above procedure. 
 
 Alternatively, multiplication gives an isogeny $\rho: D(G) \times C \to G$, where $D(G)$ is the derived group of $G$ and $C$ is the connected component of the center of $G$. (See \cite[Section 14.2]{Borel.LAG}.) Composing $\pi$ with $\rho$ is then an external tensor product, and $\rho^*$ is injective. When $D(G)$ is a familiar group, this may be more direct. This was illustrated in the $\GL_2$ example above.

  \section{Stiefel-Whitney Classes} \label{SWC.section}
 
 \subsection{Orthogonality} \label{ortho.subs}
We say that $\pi$ is \emph{orthogonal} when there is a symmetric nondegenerate bilinear $G$-invariant form on $V$. An orthogonal linear character is necessarily quadratic, meaning it takes values in $\{ \pm 1\}$. 

For a representation $(\pi,V)$ of $G$, write $S(\pi)$ for the direct sum $V \oplus V^\vee$, where $V^\vee$ is the dual of $V$. It is regarded as an orthogonal representation via the bilinear form $B((v,\vartheta),(v',\vartheta'))=\lip v,\vartheta' \rip+\lip v',\vartheta \rip$.

We say an orthogonal representation $\pi$ is \emph{orthogonally irreducible}, or an \emph{OIR}, when it cannot be decomposed into a direct sum of orthogonal representations. All OIRs are either irreducible, or of the form $\pi=S(\sigma)$, where $\sigma$ is an irreducible representation which is not orthogonal. In particular, when $G$ is connected abelian, then all nontrivial OIRs are of the form $S(\chi)$, where $\chi \neq 1$ is a linear   character of $G$. Any orthogonal representation decomposes uniquely into a direct sum of OIRs.

\subsection{Definition}
 
 Now let $(\pi,V)$ be an orthogonal complex representation of $G$ of degree $n$.
 Then $\pi$ induces a map on cohomology
 \beq
 \pi^*: \HH^*(\BO_n(\C),\Z/2\Z) \to \HH^*(BG,\Z/2\Z).
 \eeq
 The ring $\HH^*(\BO_n(\C),\Z/2\Z)$ is polynomial on classes $w_1, \ldots, w_n$ with $|w_k|=k$. One defines $w_k(\pi)=\pi^*(w_k)$ (see Appendix \ref{app2}), and then
 \beq
 w(\pi)=1+w_1(\pi)+ \cdots + w_n(\pi).
 \eeq

 \begin{defn} Write $w^T(\pi) \in \HH^*(BT,\Z/2\Z)$ for the restriction of $w(\pi)$ to $T$.
 \end{defn}
 
We say that $T$ \emph{detects} the mod $2$ cohomology (of $G$) when the restriction map $\HH^*(BG,\Z/2\Z) \to \HH^*(BT,\Z/2\Z)$ is injective.
 In this case, computing $w^T(\pi)$ determines $w(\pi)$ as well. According to \cite{toda}, $T$ detects the mod $2$ cohomology, when $G=\GL_n(\C)$, $\SL_n(\C)$, $\SO_n(\C)$, or $\Sp_{2n}(\C)$.  In general $T$ detects the quadratic mod $2$ cohomology by Proposition \ref{quad.tor.detect}.

 \subsection{The Algebra $ \ol {\mb  X}$. }

  Let   $T$ be a complex torus, and $T[2]$ its $2$-torsion subgroup. 
Write  $T[2]^\vee$ for $\Hom(T[2], \{\pm 1\})$.

\begin{defn}
Put $\ol {\mb  X}=\Sym(T[2]^\vee)$.
\end{defn}

 Restriction gives a surjection $\Res: X^*(T) \twoheadrightarrow T[2]^\vee$; this extends   to a surjection $\Sym(\Res):\mb X \twoheadrightarrow 
 \ol{\mb X}$, whose kernel is $2 \mb X$. By Proposition \ref{first.swc.isomo}, the first SWC gives an isomorphism $w_1: T[2]^\vee \overset{\sim}{\to} \HH^1(BT[2],\Z/2\Z)$. This extends to an isomorphism
 \begin{equation} \label{psiprimebar} 
 	\ol{\psi}: \ol{\mb X} \overset{\sim}{\to} \HH^*(BT[2],\Z/2\Z).
 \end{equation} 
 
 The composition
\beq
\HH^2(BT,\Z) \overset{\psi^{-1}}{\to} X^*(T) \overset{\Res}{\to} T[2]^\vee \overset{\ol \psi}{\to} \HH^1(BT[2],\Z/2\Z)
\eeq
therefore extends to a surjective ring homomorphism
\beq
\varphi: \HH^*(BT,\Z) \to \HH^*(BT[2],\Z/2\Z).
\eeq
Note that $\varphi$ does not preserve degree.

From   \eqref{psiprime}, \eqref{psiprimebar}, and the above $\varphi$, we obtain the commutative diagram
\begin{equation} \label{comdiag} 
\centerline{\xymatrix{ 
				\mb X \ar[r]^{\Sym(\Res)} \ar[d]^{\psi} & \ol{\mb X} \ar[d]^{\ol{\psi}} \\
				\HH^*(BT,\Z) \ar[r]^{\varphi \: \: \: \: \quad} & \HH^*(BT[2],\Z/2\Z).\\
		}}
\end{equation}

\begin{lemma} \label{iota.injects}
The restriction map $\iota: \HH^*(BT,\Z/2\Z) \to \HH^*(BT[2],\Z/2\Z)$ is injective.
\end{lemma}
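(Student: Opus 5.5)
Since $T \cong (\C^\times)^r$ and $T[2] \cong (\Z/2\Z)^r$, with both decompositions compatible (the $2$-torsion of the $i$th factor $\C^\times$ is $\{\pm 1\}$), I will reduce to the case $r=1$ using the Künneth theorem. With $\Z/2\Z$ coefficients the Künneth formula gives ring isomorphisms
\beq
\HH^*(BT,\Z/2\Z) \cong \bigotimes_{i=1}^r \HH^*(B\C^\times,\Z/2\Z), \qquad \HH^*(BT[2],\Z/2\Z) \cong \bigotimes_{i=1}^r \HH^*(B\{\pm 1\},\Z/2\Z).
\eeq
The restriction map $\iota$ is the tensor product of the factorwise restriction maps, because $BT[2] \to BT$ can be taken to be the product of the maps $B\{\pm1\} \to B\C^\times$. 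A tensor product of injective linear maps of $\Z/2\Z$-vector spaces is injective. So it suffices to treat the case $r=1$.

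For $r=1$, we have $\HH^*(B\C^\times,\Z/2\Z)=\HH^*(\mb{CP}^\infty,\Z/2\Z)=(\Z/2\Z)[x]$ with $|x|=2$, where $x$ is the mod $2$ reduction of $c_1(\chi)$ for the identity character $\chi$. Likewise $\HH^*(B\{\pm1\},\Z/2\Z)=\HH^*(\mb{RP}^\infty,\Z/2\Z)=(\Z/2\Z)[y]$ with $|y|=1$, where $y=w_1(\Res \chi)$ is the nontrivial class by Proposition \ref{first.swc.isomo}. Since $\iota$ is a ring homomorphism, it is enough to show $\iota(x)=y^2$; then $\iota(x^n)=y^{2n}\neq 0$ for every $n$, and $\iota$ is injective.

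The key step, and the main obstacle, is the identity $\iota(x)=y^2$. I will argue via characteristic classes. The mod $2$ reduction of $c_1$ of a complex line bundle equals $w_2$ of the underlying real $2$-plane bundle. Restricted to $\{\pm1\}$, the line bundle given by $\chi$ has underlying real bundle $\mathrm{sgn}\oplus\mathrm{sgn}$, since $-1$ acts by $-1$ on $\C=\R^2$. The Whitney sum formula then gives $w_2 = w_1(\mathrm{sgn})^2 = y^2$.

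Alternatively, the same identity follows by computing the restriction $\mb{RP}^\infty \to \mb{CP}^\infty$ on $\HH^2$. The real projective plane $\mb{RP}^2 \subset \mb{CP}^1$ carries the generator of $\HH^2(\mb{CP}^\infty,\Z/2\Z)$ to the nonzero class of $\HH^2(\mb{RP}^2,\Z/2\Z)$, which is $y^2$.
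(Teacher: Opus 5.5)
Your proof is correct and has the same skeleton as the paper's. You reduce to $T=\C^\times$ (you justify this via K\"unneth over $\Z/2\Z$, which the paper leaves implicit), then show $\iota(x)=v^2$ and use the polynomial ring structures. The difference lies in how you verify the key identity.

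The paper takes $x$ to be the class of the double cover $z\mapsto z^2$ of $\C^\times$. Its pullback to $\mu_2$ is the nonsplit extension $\mu_4\to\mu_2$, so $\iota(x)\neq 0$, and then $\HH^2(B\mu_2,\Z/2\Z)$ is spanned by $v^2$. This is quick, but it tacitly uses the natural correspondence between classes in $\HH^2(BG,\Z/2\Z)$ and central $\Z/2\Z$-extensions of $G$. For the discrete group $\mu_2$ it is the extension, not a covering space, that is nontrivial.

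You instead take $x=c_1(\chi)\bmod 2$ and compute $\iota(x)=w_2(\sgn\oplus\sgn)=y^2$ from $c_1\equiv w_2 \pmod 2$ and the Whitney sum formula. This uses only standard characteristic-class facts, and it gives more. The same computation for an arbitrary character shows $\iota(c\bmod 2)=\varphi(c)^2$ for all $c\in\HH^*(BT,\Z)$, with $\varphi$ as in Proposition \ref{watch}. Reduction mod $2$ maps $\HH^*(BT,\Z)$ onto $\HH^*(BT,\Z/2\Z)$, $\ker\varphi=2\HH^*(BT,\Z)$, and $\HH^*(BT[2],\Z/2\Z)$ is reduced. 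Together these give the lemma in every rank at once, without K\"unneth.

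One correction to your final aside: there is no inclusion $\mb{RP}^2\subset\mb{CP}^1$, since $\mb{RP}^2$ does not embed in the $2$-sphere. You presumably mean the real locus $\mb{RP}^2\subset\mb{CP}^2$. Even then you would need to identify $B\mu_2\to B\C^\times$ with the inclusion of real points and justify the nonvanishing, e.g.\ by noting that a generic complex line meets $\mb{RP}^2$ transversally in one point. Since your main argument is already complete, you can simply drop the aside.
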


\begin{proof} The essential case is $T=\C^\times$. In this case $\HH^*(BT,\Z/2\Z)$ is a polynomial ring $\Z/2\Z[x]$, where $x \in \HH^2(BT,\Z/2\Z)$ corresponds to the nontrivial double cover of $\C^\times$ by itself. The restriction of this cover to $\mu_2 \subset \C^\times$ is nontrivial, so $\iota(x) \in \HH^2(BT[2],\Z/2\Z)$ is nonzero. Of course $\HH^*(BT[2],\Z/2\Z)$ is a polynomial ring $\Z/2\Z[v]$, with $v$ of degree $1$. Therefore $\iota(x)=v^2$ and the lemma follows. 
\end{proof}

 \begin{prop} \label{watch} If $\pi$ is an orthogonal representation of $T$, then  
	\begin{equation} \label{main}
		 w^{T[2]}(\pi)=\varphi(c(\pi)).
	\end{equation}
\end{prop}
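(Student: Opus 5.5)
The plan is to reduce to orthogonally irreducible pieces and then compute both sides of \eqref{main} directly. Both sides are multiplicative under direct sums. On the left, the total SWC of an orthogonal direct sum is the cup product of the total SWCs (Whitney sum formula, Appendix \ref{app2}), and restriction to $T[2]$ is a ring map. On the right, $c(\pi_1\oplus\pi_2)=c(\pi_1)c(\pi_2)$, and $\varphi$ is a ring homomorphism. By Section \ref{ortho.subs}, since $T$ is connected abelian, every orthogonal representation of $T$ is a direct sum of trivial characters and representations $S(\chi)$ with $\chi\in X^*(T)$ nontrivial. (A nontrivial orthogonal character would be quadratic, hence trivial on the connected group $T$.) For the trivial representation both sides equal $1$. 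So it suffices to prove $w^{T[2]}(S(\chi))=\varphi(c(S(\chi)))$.

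For the right side, $S(\chi)|_T=\chi\oplus\chi^{-1}$, so
\beq
c(S(\chi))=(1+c_1(\chi))(1-c_1(\chi)).
\eeq
By the diagram \eqref{comdiag}, $\varphi(c_1(\chi))=\ol\psi(\Res\chi)=w_1(\chi|_{T[2]})$. Also $\varphi(-c_1(\chi))=\varphi(c_1(\chi^{-1}))=w_1(\chi^{-1}|_{T[2]})$, and this is the same class because $\chi^{-1}|_{T[2]}=\chi|_{T[2]}$. Hence
\beq
\varphi(c(S(\chi)))=(1+w_1(\chi|_{T[2]}))^2.
\eeq

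For the left side, restrict $S(\chi)$ to $T[2]$. Now $\chi|_{T[2]}=\eta$ is a quadratic character, so $\chi\oplus\chi^{-1}$ restricts to $\eta\oplus\eta$. We must identify this \emph{as an orthogonal representation}, with the hyperbolic form $B$. The step I expect to need the most care is checking that $(\eta\oplus\eta,B)$ is isometric to the orthogonal sum of two copies of $\eta$, each carrying the standard form. Over $\C$ this is easy: take the basis $v+\vartheta$, $i(v-\vartheta)$, where $\vartheta(v)=1$. These vectors are $B$-orthogonal with nonzero norms, and each spans a $T[2]$-stable line on which $T[2]$ acts by $\eta$. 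One must also make sure that the SWC of a quadratic character $\eta$, in the paper's sense for orthogonal representations, is $1+w_1(\eta)$; I would take this from Proposition \ref{first.swc.isomo} and the compatibility with real representations established in the Appendix. The Whitney formula then gives $w^{T[2]}(S(\chi))=(1+w_1(\eta))^2$, which matches the right side.

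Alternatively, one could work in $\HH^*(BT,\Z/2\Z)$, where the Whitney formula gives $w(S(\chi))$ as the mod $2$ reduction of $c(\chi)$, and pass to $T[2]$ via Lemma \ref{iota.injects}. The direct computation above seems cleaner, though. Its only real inputs are the orthogonal decomposition over $T[2]$ and naturality of SWCs under restriction $T[2]\hookrightarrow T$.
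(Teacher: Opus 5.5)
Your proposal is correct and follows essentially the same route as the paper. You reduce, via multiplicativity of $w$, $c$ and $\varphi$, to the orthogonally irreducible pieces $S(\chi)$, and then check that both sides equal $(1+w_1(\chi|_{T[2]}))^2$. Your explicit isometry splitting the hyperbolic form on $\eta\oplus\eta$, and your treatment of trivial summands, fill in details the paper leaves implicit; at that step the paper relies on its appendix remark that the orthogonal Stiefel--Whitney classes do not depend on the choice of invariant form.
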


\begin{proof}

Since $\varphi$ is multiplicative, it suffices to check \eqref{main} for $\pi$ orthogonally irreducible. 
So we may take $\pi=S(\chi)= \chi \oplus \chi^{-1}$ for a  linear character $\chi$.  We have 
\begin{align*}
	\iota^*(w(S(\chi))) &= \iota^*(w(\chi \oplus \chi^\vee)) \\
	&= w(\omega \oplus \omega), 
	\end{align*}
	where $\omega=\chi|_{T[2]}$, a quadratic linear character. Since $\omega$ is orthogonal, this equals
	\beq
	w(\omega) \cup w(\omega) =(1+w_1(\omega)) \cup (1+w_1(\omega)).
	\eeq
	Meanwhile, we have
\begin{align*}
	\varphi(c(S(\chi))) &=\varphi(c(\chi \oplus \chi^{-1}))\\
	&= \varphi((1 + c_1(\chi))(1+c_1(\chi^{-1})))\\
	&=(1+w_1(\omega)) \cup (1+w_1(\omega)),
\end{align*}
since by definition,
\begin{equation} \label{vaccine}
\varphi(c_1(\chi))=w_1(\omega).
\end{equation}
This completes the proof.
\end{proof}

\begin{remark}
Note that under the isomorphism \eqref{psiprimebar}, $w^{T[2]}(\pi)$ corresponds to the product
\beq
\prod_{\omega} (1+\omega)^{m_\pi(\omega)},
\eeq
where the product is taken over $T[2]^\vee$, and $m_\pi(\omega)$ is the multiplicity of $\omega$ in the restriction of $\pi$ to $T[2]$. This form of the SWC is very close to \cite[Theorem 11.3]{Borel.Hirz.58}.
\end{remark}

 Now let $G$ be connected complex reductive, with a maximal torus $T$. For an orthogonal representation $\pi$ of $G$, write $w^T(\pi)$ for the restriction of $w(\pi)$ to $T$.
  Equation \eqref{main} computes $w^T(\pi)$, since $c^T(\pi)$ is determined by Proposition  \ref{psi}.  
  
  \begin{example} We continue with Example \ref{sl2.chern.ex} for $G=\SL_2(\C)$. Note that $\pi_\ell$ is orthogonal iff $\ell$ is even. Putting $\varphi(e)=v$, we compute that
$w_2^T(\pi_\ell)=  \binom{\ell+2}{3}v^2$ and $w_4^T(\pi_\ell)=\frac{1}{3} \binom{\ell+2}{5}(5 \ell+2)  v^4$. It is easy to see that $\binom{\ell+2}{3}$ and $\frac{1}{3} \binom{\ell+2}{5}(5 \ell+2)$ are always even for $\ell$ even, hence $w_2^T(\pi_\ell)=w_4^T(\pi_\ell)=0$.
Hence $w^T(\pi_\ell)=1$ modulo terms of degree $6$ and higher. (In Proposition \ref{IOR.sl2}  we will see $w(\pi_\ell)=1$.)

For $\ell$ odd, we have $w_2(S(\pi_\ell))=0$, and $w_4(S(\pi_\ell))=\binom{\ell+2}{3}v^4$, which is $v^4$ for $\ell \equiv 1 \mod 4$, and vanishes for $\ell \equiv 3 \mod 4$. For $\ell \equiv 1 \mod 4$ we have
\beq
w^T(\pi_\ell)=1+ v^4+O(v^6).
\eeq

\end{example}

\begin{example} We continue with Example \ref{pgl2.ex}  for $G=\PGL_2(\C)$.
Note that $\varphi(\ul e)=\ul v$, where $\ul v$ is the nonzero member of $\HH^1(BT[2],\Z/2\Z)$.
We compute that $w_2^T(\pi_\ell)=\frac{1}{4} \binom{\ell+2}{3} \ul v^2$ and 
 \beq
 w_4^T(\pi_\ell)=\frac{1}{48} \binom{\ell+2}{5} (5 \ell+12) \ul v^4.
 \eeq
 We have
 \beq
 w_2^T(\pi_\ell) = \begin{cases} & 0 \text{ when $\ell \equiv 0,6 \mod 8$} \\
 & \ul v^2 \text{ when $\ell \equiv 2,4 \mod 8$}, \\
 \end{cases}
 \eeq
 and
  \beq
 w_4^T(\pi_\ell) = \begin{cases} & 0 \text{  when $\ell \equiv 0,2,4,14 \mod 16$} \\
 & \ul v^2 \text{ when $\ell \equiv 6,8,10,12 \mod 16$.} \\
 \end{cases}
 \eeq

In particular, for $\SL_2(\C)$, $w_2^T(\pi_4)=0$ but for $\PGL_2(\C)$ we have $w_2^T(\pi_4) \neq 0$.

\end{example}

\subsection{Spinoriality}

Let $G$ be a connected reductive complex algebraic group, and $(\pi,V)$ be a complex orthogonal representation of $G$.
 Since $G$ is connected we have $\pi: G \to \SO(V)$. Then $\pi$ lifts to the corresponding spin group $\Spin(V)$ (i.e., $\pi$ is \emph{spinorial})  iff $w_2(\pi)=0$. 

 \begin{prop} Let $k$ be a positive integer. Then $w_k^T(\pi)=0$ iff $c_k^T(\pi) \in 2\HH^{2k}(BT,\Z)$.
 \end{prop}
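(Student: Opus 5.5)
The plan is to read off the claim from the commutative diagram \eqref{comdiag}, together with Proposition \ref{watch} and Lemma \ref{iota.injects}. The first point is that $w_k^T(\pi)=0$ iff $w_k^{T[2]}(\pi)=0$. Indeed, $w_k^{T[2]}(\pi)=\iota(w_k^T(\pi))$ by functoriality of SWCs under $T[2]\hookrightarrow T$, and $\iota$ is injective by Lemma \ref{iota.injects}. So it suffices to show
\beq
w_k^{T[2]}(\pi)=0 \iff c_k^T(\pi)\in 2\HH^{2k}(BT,\Z).
\eeq

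Next we pin down which graded piece $\varphi$ sends $c_k^T(\pi)$ to. The map $\varphi$ sends $\HH^2(BT,\Z)$ onto $\HH^1(BT[2],\Z/2\Z)$ and is multiplicative, so it maps $\HH^{2k}(BT,\Z)$ into $\HH^k(BT[2],\Z/2\Z)$. (It halves degree, which is why it "does not preserve degree".) Now apply Proposition \ref{watch} to $\pi|_T$, which is an orthogonal representation of $T$, and compare the degree-$k$ components on both sides. This gives
\beq
w_k^{T[2]}(\pi)=\varphi(c_k^T(\pi)).
\eeq

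It remains to show that the kernel of $\varphi$ restricted to $\HH^{2k}(BT,\Z)$ is exactly $2\HH^{2k}(BT,\Z)$. Transport along $\psi$ and $\ol\psi$, which are isomorphisms, using \eqref{comdiag}. Under them, $\varphi$ on degree $2k$ becomes $\Sym^k(\Res):\Sym^k_\Z X^*(T)\to \Sym^k(T[2]^\vee)$. Choose a basis $\chi_1,\dots,\chi_r$ of $X^*(T)$. Their restrictions form an $\mb F_2$-basis of $T[2]^\vee\cong(\Z/2\Z)^r$, so $\Sym(\Res)$ is just reduction mod $2$ of coefficients, $\Z[\chi_1,\dots,\chi_r]\to\mb F_2[\bar\chi_1,\dots,\bar\chi_r]$. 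Its kernel in degree $k$ is therefore $2\,\Sym^k X^*(T)$, consistent with the kernel $2\mb X$ stated earlier. Applying $\psi$ gives $\ker\varphi\cap\HH^{2k}(BT,\Z)=2\HH^{2k}(BT,\Z)$, and the proposition follows.

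The main point needing care is the degree bookkeeping in the middle step: one must confirm that $\varphi$ is graded in the halved sense, so that the degree-$k$ piece of $\varphi(c(\pi))$ comes only from $c_k$. The other point is the identification of $\ker\varphi$ in each degree, which requires $T[2]^\vee=X^*(T)/2X^*(T)$. Both follow directly from the constructions given above.
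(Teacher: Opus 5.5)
Your proposal is correct and follows the same route as the paper. Injectivity of $\iota$ (Lemma \ref{iota.injects}) reduces the question to $T[2]$, Proposition \ref{watch} gives $w_k^{T[2]}(\pi)=\varphi(c_k^T(\pi))$, and $\ker\varphi=2\HH^*(BT,\Z)$; your degree-by-degree bookkeeping and the description of $\Sym(\Res)$ as reduction of coefficients mod $2$ just make explicit what the paper leaves implicit.
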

 
 \begin{proof}
  By Lemma \ref{iota.injects} and Proposition \ref{watch} we have:
\beq
\begin{split}
w_k^T(\pi)=0 & \Leftrightarrow w_k^{T[2]}(\pi)=0 \\
 & \Leftrightarrow  c_k^T(\pi) \in \ker \varphi. \\
 \end{split}
\eeq
 But this kernel is $2\HH^{*}(BT,\Z)$.
 \end{proof}

 \begin{cor} The representation $\pi$ is spinorial iff $c_2^T(\pi) \in 2\HH^{4}(BT,\Z)$.
 \end{cor}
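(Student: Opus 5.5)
The plan is to combine two facts already in hand. First, spinoriality is detected by $w_2$: since $G$ is connected, $\pi$ lands in $\SO(V)$, and as noted just above, $\pi$ lifts to $\Spin(V)$ iff $w_2(\pi)=0$. Second, the preceding Proposition with $k=2$ gives $w_2^T(\pi)=0$ iff $c_2^T(\pi)\in 2\HH^4(BT,\Z)$. What remains is to bridge the gap between $w_2(\pi)$ and its restriction $w_2^T(\pi)$.

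That bridge is the step I expect to need care. In general $\HH^*(BG,\Z/2\Z)\to\HH^*(BT,\Z/2\Z)$ need not be injective. However, $w_2(\pi)$ lies in degree $2$, and Proposition \ref{quad.tor.detect} in the Appendix says that $T$ detects the quadratic mod $2$ cohomology. I will invoke it to conclude that $w_2(\pi)=0$ iff $w_2^T(\pi)=0$. Alternatively, if one wants to avoid the Appendix, the obstruction to lifting can be checked directly on the torus. The spin cover restricted to $\pi(G)$ splits iff its pullback to $G$ splits. Since $G$ is connected, this happens iff the pullback to $T$ splits, because $\pi_1(T)\to\pi_1(G)$ is surjective. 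This gives the same equivalence.

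Chaining these together: $\pi$ is spinorial $\iff w_2(\pi)=0 \iff w_2^T(\pi)=0 \iff c_2^T(\pi)\in 2\HH^4(BT,\Z)$. The final equivalence is the preceding Proposition, whose proof uses Lemma \ref{iota.injects}, Proposition \ref{watch}, and $\ker\varphi=2\HH^*(BT,\Z)$.
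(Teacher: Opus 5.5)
Your proposal is correct and is exactly the paper's proof: the chain $\pi$ spinorial $\iff w_2(\pi)=0 \iff w_2^T(\pi)=0 \iff c_2^T(\pi)\in 2\HH^4(BT,\Z)$, with the middle step from Proposition \ref{quad.tor.detect} and the last from the preceding Proposition at $k=2$.

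One small caution about your optional alternative. The claim that the spin cover restricted to $\pi(G)$ splits iff its pullback to $G$ splits is false: for the adjoint representation of $\SL_2(\C)$, $\pi(G)=\SO_3(\C)$ and the cover does not split there, yet $\pi$ is spinorial. You don't need that claim, since spinoriality is by definition about the pullback to $G$, and your $\pi_1(T)\twoheadrightarrow\pi_1(G)$ argument handles that correctly.
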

\begin{proof} 
By Proposition \ref{quad.tor.detect} and the above, we have:
\beq
\begin{split}
\pi \text{ is spinorial} & \Leftrightarrow w_2(\pi)=0 \\
			 & \Leftrightarrow w_2^T(\pi)=0 \\
			  & \Leftrightarrow c_2^T(\pi) \in 2\HH^{4}(BT,\Z). \\
			  \end{split}
			  \eeq
\end{proof}

Combining this with Proposition \ref{c_2.simp} gives:

  \begin{cor} When $\mf g$ is simple, $\pi_\la$ is spinorial iff 
  \beq
  \dfrac{\deg \pi_\la  \lip \lambda + 2\delta,\lambda \rip}{4\dim \mf g} \cdot q_2 \in \Sym^2_{\Z} X^*(T).
  \eeq
 \end{cor}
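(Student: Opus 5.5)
The plan is to chain the previous corollary (spinorial iff $c_2^T(\pi_\la) \in 2\HH^4(BT,\Z)$) with the explicit formula for ${\bf c}_2$ from Theorem \ref{c_2.simp}, and then pull everything back to $\Sym_\Z X^*(T)$ through $\psi$.

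First, by the preceding corollary, $\pi_\la$ is spinorial iff $c_2^T(\pi_\la) \in 2\HH^4(BT,\Z)$. Applying the ring isomorphism $\psi^{-1}$ of \eqref{psiprime}, which identifies $\HH^4(BT,\Z)$ with $\Sym^2_\Z X^*(T)$ and preserves the subgroup of multiples of $2$, this becomes the condition $E_2(d\pi_\la) \in 2\,\Sym^2_\Z X^*(T)$. Here $E_2(d\pi_\la)$ a priori lies in $\mb X^2 = \Sym^2_\Z X^*(T)$, viewed inside $\mb S^2$.

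Second, I would evaluate $E_2(d\pi_\la)$. Since $\mf g$ is simple, ${\bf P}_1 = 0$, so $2{\bf E}_2 = -{\bf P}_2$. Corollary \ref{p2} then gives
\beq
E_2(d\pi_\la) = -\frac{\deg \pi_\la \lip \la+2\delta,\la\rip}{2\dim\mf g}\, q_2,
\eeq
using the identity $\tau(q_2^\vee)(\la) - q_2^\vee(\delta) = \lip \la + 2\delta, \la \rip$ noted after Corollary \ref{p2}, and ${\bf P}_0(\la) = \deg \pi_\la$. Dividing by $2$ (and discarding the sign, which does not affect membership in a lattice), the condition $E_2 \in 2\,\Sym^2_\Z X^*(T)$ is exactly that $\frac{\deg\pi_\la \lip \la+2\delta,\la\rip}{4\dim\mf g}\, q_2$ lies in $\Sym^2_\Z X^*(T)$.

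The only point needing care is the meaning of "$\in \Sym^2_\Z X^*(T)$": the statement should be read inside $\mb S^2 = \Sym^2_\C(\mf t^*)$, with $\Sym^2_\Z X^*(T)$ embedded via differentiation. I must check that $x \in 2\,\Sym^2_\Z X^*(T)$ iff $x/2 \in \Sym^2_\Z X^*(T)$ for $x$ in this lattice. This holds because $\Sym^2_\Z X^*(T)$ is a free $\Z$-module and the map into $\mb S$ is injective, since $\mb X \hookrightarrow \mb S_\Lambda$. The normalization of the bracket (the Killing form, the same one used in Corollary \ref{p2}) also has to be the one appearing in the statement. Apart from this bookkeeping, the argument is a direct substitution.
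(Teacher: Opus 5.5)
Your proposal is correct and follows the paper's route. The paper obtains this corollary by combining the preceding corollary ($\pi$ spinorial iff $c_2^T(\pi) \in 2\HH^4(BT,\Z)$) with the formula for ${\bf c}_2$ in Theorem \ref{c_2.simp}; you do the same after unpacking that theorem into Corollary \ref{p2}, the identity $2{\bf E}_2=-{\bf P}_2$, and the isomorphism $\psi$, and you also handle correctly the detail the paper leaves implicit, namely dividing by $2$ inside the lattice $\Sym^2_\Z X^*(T)\subset \mb S^2$.
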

 
 Actually, it is only the ``$2$ part'' of the coefficient of $q_2$ that matters. To be more precise, let $j(\pi)=-\ord_2 \left( \dfrac{\deg \pi_\la  \lip \lambda + 2\delta,\lambda \rip}{4\dim \mf g} \right)$. 
 
   \begin{cor} \label{spin.crit.26} When $\mf g$ is simple, $\pi_\la$ is spinorial iff 
  \beq
 2^{-j(\pi)}\cdot q_2 \in \Sym^2_{\Z} X^*(T).
  \eeq
 \end{cor}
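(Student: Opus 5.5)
The argument is a reduction to the preceding corollary: we need the rational coefficient
\beq
r=\dfrac{\deg \pi_\la \lip \lambda+2\delta,\lambda\rip}{4\dim \mf g}
\eeq
to satisfy $r\, q_2\in \Sym^2_\Z X^*(T)$ exactly when $2^{-j(\pi)} q_2\in\Sym^2_\Z X^*(T)$. Write $r=2^{-j(\pi)}\cdot \frac{a}{b}$ with $a,b$ odd integers; this is possible since by definition $-j(\pi)=\ord_2(r)$. (If $r=0$ then $\pi_\la$ is trivial, hence spinorial. We take $j(\pi)=-\infty$ in that case, or treat it separately.) The claim then reduces to a lattice statement. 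Let $L=\Sym^2_\Z X^*(T)$, a free abelian group of finite rank, and set $y=2^{-j(\pi)}q_2\in L\otimes\Q$. We must show that $\frac{a}{b}y\in L$ if and only if $y\in L$.

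First, $q_2\in L$. Indeed $q_2=P_2(\ad)=\sum_{\alpha\in R}\alpha^2$, and every root lies in $X^*(T)$. Hence $2^m q_2\in L$ for every $m\ge 0$.

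Next, the denominators of $y$ are powers of $2$. The coordinates of $y$ in a $\Z$-basis of $L$ are integers times $2^{-j(\pi)}$, so there is some $M$ with $2^M y\in L$.

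The main step is the following. Suppose $\frac{a}{b}y\in L$. Then $a y = b\cdot \frac{a}{b}y\in L$. Since $a$ is odd and $2^M$ is a power of $2$, we can choose integers $s,t$ with $sa+t2^M=1$. This gives
\beq
y=s(ay)+t(2^M y)\in L.
\eeq
Conversely, suppose $y\in L$. Then $b\cdot\frac{a}{b}y=ay\in L$, so every coordinate of $\frac{a}{b}y$ has denominator dividing the odd number $b$. But $2^M\cdot \frac{a}{b}y=\frac{a}{b}(2^My)$ has coordinates in $\frac{1}{b}\Z$ as well. A rational number whose denominator divides both $b$ and some power of $2$ must be an integer, so $\frac{a}{b}y\in L$.

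This establishes the equivalence, and the statement follows by applying the previous corollary. The only real subtlety is confirming that the denominators of $y$ are genuinely $2$-power. That holds because $q_2$ has integral coordinates and the only denominator we introduced is $2^{j(\pi)}$. If instead $j(\pi)\le 0$, then $y\in L$ automatically and the argument still goes through.
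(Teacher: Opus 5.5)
Your implication $\frac{a}{b}y\in L\Rightarrow y\in L$ is fine. It needs only $q_2=\sum_{\alpha\in R}\alpha^2\in L$, and it is essentially the paper's final Bezout step, which removes the odd numerator.

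The converse, however, has a genuine gap. From $y\in L$ you correctly get that the coordinates of $\frac{a}{b}y$ lie in $\frac{1}{b}\Z$. Your second observation, that $\frac{a}{b}(2^My)$ also has coordinates in $\frac1b\Z$, gives the same kind of information again. You never show that the denominators of $\frac{a}{b}y$ are powers of $2$, so the closing ``divides both $b$ and a power of $2$'' step has nothing to apply to.

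The lattice statement you reduced to is false in general: take $L=\Z$, $y=1$, $a/b=1/3$. Nor is the odd part $b$ of $r$ always $1$. For the $5$-dimensional representation of $\mf{so}_5$, with $e_1,e_2\in\mf t^*$ the usual coordinate functionals so that $q_2=6(e_1^2+e_2^2)$, one gets $\lip\la+2\delta,\la\rip=2/3$ and $r=1/12$, so $b=3$. For $G=\Spin_5(\C)$ one has $y=\frac32(e_1^2+e_2^2)\in L$. The further fact $\frac13y\in L$ holds only because of the particular shape of $q_2$, not by any general lattice principle.

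The missing input is the integrality of the second Chern class. By Theorem \ref{c_2.simp}, $c_2^T(\pi_\la)$ corresponds to $-2r\,q_2$. It lies in $\Sym^2_\Z X^*(T)$ because all weights of $\pi_\la$ lie in $X^*(T)$. Hence $2\cdot\frac{a}{b}y\in L$. Together with $b\cdot\frac{a}{b}y=ay\in L$ and $\gcd(2,b)=1$, this gives $\frac{a}{b}y\in L$.

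This is exactly what the paper's first Bezout identity $\frac{1}{2n}=\frac{A}{2}+\frac{B}{n}$ accomplishes: it uses $c_2^T(\pi_\la)\in\Sym^2(X^*(T))$ to discard the odd denominator. So your closing remark puts the subtlety in the wrong place. The $2$-power denominators of $y$ are automatic; the odd denominator of $r$ is what requires an input beyond $q_2\in L$.
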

 
 \begin{proof} Write  
 \beq
 \frac{\deg \pi_\la  \lip \lambda + 2\delta,\lambda \rip}{4\dim \mf g} =\frac{m}{2^jn},
 \eeq
 with $m,n$ odd. Note that $\dfrac{m}{2^{j-1}n}q_2=c_2^{T}(\pi_\la) \in \Sym^2(X^*(T))$. Let $A,B \in \Z$ so that $\frac{1}{2n}=\frac{A}{2}+\frac{B}{n}$.
 Then $\pi_\la$ is spinorial iff $\dfrac{m}{2^{j}n}q_2 \in  \Sym^2(X^*(T))$. But
 \beq
 \begin{split}
 \frac{m}{2^{j}n}q_2 		&= \frac{m}{2^{j-1}} \left( \frac{A}{2}+\frac{B}{n} \right)q_2 \\
		&= \frac{Am}{2^j}q_2 +B c_2^{T}(\pi_\la). \\
		\end{split}
		\eeq
 Hence the criterion is that $\dfrac{Am}{2^j}q_2 \in \Sym^2 X^*(T)$. Since $A,m$ are odd, we apply another Bezout identity argument to 
 deduce that $\pi_\la$ is spinorial iff $\dfrac{1}{2^j} q_2 \in \Sym^2 X^*(T)$.

 \end{proof}

 \begin{remark} By \cite[Proposition 7]{joshi}, the representation $\pi$ is spinorial iff for all $\nu \in X_*(T)$, the quantity
 \beq
  \dfrac{\deg \pi_\la  \lip \lambda + 2\delta,\lambda \rip}{4\dim \mf g} \cdot q_2(\nu)
  \eeq
 is an integer. But any \emph{homogeneous} quadratic polynomial on $X_*(T)$ taking integer values   lies in $\Sym^2 X^*(T)$ by \cite[Proposition 2, page 177]{bourbaki}.
  \end{remark}

 \section{Total Stiefel-Whitney Classes}
 
 In this section we give an alternate approach to determining $w^T(\pi)$ for an orthogonal representation $\pi$, when $G$ is one of the groups 
 $\GL_n(\C)$, $\SL_n(\C)$, $\SO_n(\C)$, or $\Sp_{2n}(\C)$. In each of these cases, the Weyl group $W$ has a  subgroup $\Sigma < W$ isomorphic to the symmetric group $\Sigma_r$, where $r$ is the rank of $T$, and   there is an isomorphism
 \begin{equation} \label{strong ind}
T \cong (\C^\times)^r
\end{equation}
 of  Lie groups which preserves the $\Sigma_r$-action. 
 
 We will determine $w^T(\pi)$ as a certain factorization, whose exponents are combinations of character values of $\pi$ at elements of order $2$.

\subsection{A Factorization of $w^T(\pi)$}
Recall from Lemma \ref{iota.injects} that $T[2]$ is a detecting subgroup for the mod $2$ cohomology of the maximal torus $T$. So we may describe $w^{T}(\pi)$ through its image in the polynomial algebra $\HH^*(BT[2],\Z/2\Z)=(\Z/2\Z)[v_1, \ldots, v_r]$. 

The restriction $\pi|_{T[2]}$ is an $S_r$-invariant representation of the elementary abelian $2$-group $T[2]$. For $0 \leq i \leq r$, write $b_i \in T[2]$ for the element corresponding to
$(\underbrace{-1,\ldots, -1}_{i \text{ times }},1,1,\ldots)$ under \eqref{strong ind}. 

\begin{example} For $G=\SL_3$, we may take $\eqref{strong ind}$ to be  $\diag(a,b, (ab)^{-1}) \leftrightarrow (a,b)$. Here $\Sigma \cong \Sigma_2$ permutes the first two diagonal entries. Hence $b_0=\diag(1,1,1)$, $b_1=\diag(-1,1,-1)$ and $b_2=\diag(-1,-1,1)$.
\end{example}

A typical member of $\HH^1=\HH^1(BT[2],\Z/2\Z)$ may be expressed as $v=a_1v_1+ \cdots +a_rv_r$, with $a_i \in \Z/2\Z$. Let $|v|$ be the number of $i$ with $a_i \neq 0$.

According to \cite[Proposition 2]{GJgln}, we have
\beq
w^{T[2]}(\pi)=\prod_{k=1}^r \left( \prod_{v \in \HH^1 : |v|=k} (1+v) \right)^{m_k(\pi)},
\eeq
where 
\beq
m_k(\pi)=\frac{1}{2^r} \sum_{i=0}^r A_{i,k} \cdot \chi_\pi(b_i).
\eeq
Here $A_{i,k}$ is the coefficient of $x^i$ in the expression $(1-x)^{k}(1+x)^{r-k}$.  (See also \cite[Section 4]{Malik.Spallone.SLn}.)


\subsection{Example: $\SL_2$}

For $G=\SL_2(\C)$, $b_1=\diag(-1,-1)=-1$, and we have
\beq
w(\pi)=(1+v)^{m_1(\pi)},
\eeq
where $m_1(\pi)=\half (\deg \pi-\chi_\pi(-1))$.

 \begin{prop} \label{IOR.sl2} For $G=\SL_2(\C)$, and $\pi$ an IOR of $G$, we have $w(\pi)=1$.
 \end{prop}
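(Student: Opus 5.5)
The plan is to use the factorization just stated, $w(\pi)=(1+v)^{m_1(\pi)}$ with $m_1(\pi)=\half(\deg\pi-\chi_\pi(-1))$. For $\SL_2(\C)$ the torus detects mod $2$ cohomology, so it suffices to show $w^{T}(\pi)=1$, i.e.\ $w^{T[2]}(\pi)=1$ by Lemma \ref{iota.injects}. Since $(1+v)^{m}$ in $(\Z/2\Z)[v]$ equals $1$ exactly when $m=0$, the task is to show $m_1(\pi)=0$, that is, $\chi_\pi(-1)=\deg\pi$. Equivalently, $-1\in\SL_2(\C)$ must act trivially on $\pi$.

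We split according to the classification of orthogonally irreducible representations in Section \ref{ortho.subs}. First, suppose $\pi$ is irreducible and orthogonal, so $\pi=\pi_\ell$. By the example above, $\pi_\ell$ is orthogonal iff $\ell$ is even. The central element $-1$ acts on $\pi_\ell$ by the scalar $(-1)^\ell$, since the weights of $\pi_\ell$ are $\ell,\ell-2,\ldots,-\ell$ and $-1=\diag(-1,-1)$ acts on the weight $j\mu_0$ vector by $(-1)^j$. For $\ell$ even this scalar is $1$, so $\chi_\pi(-1)=\ell+1=\deg\pi$ and $m_1(\pi)=0$.

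Second, suppose $\pi=S(\sigma)$ with $\sigma$ irreducible and not orthogonal. Then $\sigma=\pi_\ell$ with $\ell$ odd. Every representation of $\SL_2(\C)$ is self-dual, so $S(\sigma)=\pi_\ell\oplus\pi_\ell$. Here $-1$ acts by $-1$, so $\chi_\pi(-1)=-2(\ell+1)$ and $m_1(\pi)=2(\ell+1)$, which is even but nonzero. In this case $w(\pi)=(1+v)^{2(\ell+1)}=(1+v^2)^{\ell+1}$, which is not $1$ in general. Indeed, the earlier example found $w_4(S(\pi_\ell))=v^4$ for $\ell\equiv 1\bmod 4$.

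So the statement can only hold if ``IOR'' means an \emph{irreducible} orthogonal representation, which is how I read it. With that reading, only the first case applies and the argument above is complete. The only point needing care is the consistency of the factorization formula for $\SL_2$ with $r=1$: we need $A_{0,1}=1$ and $A_{1,1}=-1$ from $(1-x)$, and $b_0=1$, $b_1=-1$. This gives $m_1=\half(\chi_\pi(1)-\chi_\pi(-1))$, as stated. I expect no real obstacle beyond this reading of the hypothesis.
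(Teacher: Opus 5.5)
Your proposal is correct and follows the paper's proof. For an irreducible orthogonal $\pi_\ell$ one has $\ell$ even, so $-1$ acts trivially and $m_1(\pi)=\half(\deg\pi-\chi_\pi(-1))=0$, which gives $w(\pi)=1$. You read ``IOR'' as an irreducible orthogonal representation, and so does the paper: it treats only that case, and right afterwards computes $w(S(\pi_\ell))=(1+v^2)^{\ell+1}$ for $\ell$ odd, exactly as you did.
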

 \begin{proof}
When $\pi=\pi_\ell$ is orthogonal, we must have $\ell$ even and  $-1$ acts trivially, so $m_1(\pi)= 0$.
\end{proof}

On the other hand, for $\ell$ odd, we have $m_1(S(\pi_\ell))=2m_1(\pi_\ell)=2(\ell+1)$, and so
\beq
w(S(\pi_\ell))=(1+v^2)^{\ell+1}.
\eeq
In particular, $w(S(\pi_1))=(1+v^2)^2=1+v^4$, so $w_4(S(\pi_1))=v^4$. For any $k$, we have $w_{2k}(S(\pi_\ell))=\binom{\ell+1}{k} v^{2k}$, and the odd SWCs vanish.

 \subsection{Example: $\SL_3$}

  Take $G = \SL_3(\C)$ and $T$ the diagonal torus. 
The irreducible representation $\pi_{m,n}$ is orthogonal when $m=n$. Put 
\beq
\mc D=(1+v_1)(1+v_2)(1+v_1+v_2)=1+ v_1^2+v_2^2+ v_1v_2 + v_1^2 v_2+ v_1 v_2^2,
\eeq
as in \cite[Section 3.4]{Malik.Spallone.SLn}.
Then $w^{T[2]}(\pi)= \mc D^{m(\pi)}$, where
\beq
m(\pi)=\frac{1}{4} (\deg \pi-\chi_\pi(\diag(-1,-1,1))).
\eeq
Recall from \cite[Section 24.2]{Fulton} that
\beq
\chi_\la(\diag(t_1,t_2,t_3))=S_{\la}(t_1,t_2,t_3),
\eeq
where $S_\la$ is the Schur polynomial. The same reference gives the Giambelli formula for $S_\la$ in terms of complete symmetric polynomials $H_p(t_1,t_2,t_3)$. So it is enough to compute $H_p(1,1,1)$ and $H_p(-1,-1,1)$. Of course, $H_p(1,1,1)=\binom{p+2}{2}$; a combinatorial argument gives that
\beq
H_p(-1,-1,1)= \left\lfloor \frac{p}{2} \right\rfloor +1.
\eeq
For $\la=(2n,n,0)$, this gives $S_\la(1,1,1)=\deg \pi_\la=(n+1)^3$ and
\beq
S_\la(-1,-1,1)= \begin{cases} n+1, & \text{ $n$ even} \\
0 & \text{ $n$ odd} \\
\end{cases}.
\eeq
Therefore
\beq
m(\pi_{n,n})=
\begin{cases}
\frac{1}{4} (n+1)^3, & \text{ when $n$ is odd} \\
\frac{1}{4} n(n+1)(n+2), & \text{ when $n$ is even} \\
\end{cases}.
\eeq

For example, let $\pi$ be the adjoint representation. Then $n=1$ and  $m(\pi)=2$. Hence $w^{T[2]}(\pi)=\mc D^2$, and
\beq
w_4^{T[2]}(\pi)=v_1^4+v_2^4+v_1^2 v_2^2 \neq 0.
\eeq

\bigskip

Next we compute $w(S(\pi))$ for $\pi=\pi_{m,n}$. Note that $\chi_{S(\pi)}(g)=2 \chi_{\pi}(g)$ whenever $\chi_{\pi}(g)$ is real.
  A similar calculation to the above gives  
\beq
 	\chi_{\pi_{m,n}}(-1,-1,1)= \begin{cases}
		0  \quad &\text{ if } m \text{ odd and } n \text{ odd }\\\\
		-\dfrac{m+1}{2} \quad  &\text{ if } m \text{ odd and } n \text{ even }\\\\
		-\dfrac{n+1}{2}  \quad   &\text{ if } m \text{ even and } n \text{ odd }\\\\
		\dfrac{m+n}{2} +1 \quad  &\text{ if } m \text{ even and } n \text{ even }\\\\
	\end{cases} 
\eeq
so that
\beq
m(S(\pi_{m,n})) = \begin{cases}
	\frac{1}{4} (m+1)(n+1)(m+n+2) & \text{if $m$ odd and $n$ odd } \\
	 \frac{1}{4} (m+1)((n+1)(m+n+2) + 1) & \text{ if $m$ odd and $n$ even}\\
	\frac{1}{4}(n+1)((m+1)(m+n+2)+1) & \text{if $m$ even and $n$ odd}\\
	\frac{1}{4}(m+n+2)((m+1)(n+1)-1) & \text{if } m \text{ and } n \text{ are even.}\\
\end{cases}
\eeq

 \begin{remark} Note in all cases   $m(\pi)$ is even; this is necessary because $G$ is simply connected and so
 \beq
 w_2^{T[2]}(\pi)=m(\pi) (v_1^2+v_2^2+ v_1v_2)
 \eeq
 must vanish.
 
 \end{remark}


 \subsection{Remark on Total Chern Classes}\label{CCchar}
In this section, we outline an analogue of the above procedure for computing total Chern Classes, in the case of $G=\GL_n(\C)$ for simplicity of notation.
The restriction of a representation $\pi$ of $G$ to its diagonal torus $T$ is invariant under the Weyl group, and we may try to exploit this to understand the total CC of $\pi$. 

So let $T=(\C^\times)^n$, and $\ul m=(m_1, \ldots, m_n) \in \Z^n$. Define $\omega_{\ul m} \in X^*(T)$ by
 \beq
 \omega_{\ul m}(a_1, \ldots, a_n)=a_1^{m_1} \cdots a_n^{m_n}.
 \eeq
 
 We put
 \beq
 \sigma_{\ul m}=\sum_{\ul m'}\omega_{\ul m'},
 \eeq 
 where $\ul m'$ runs over the $S_n$-orbit of $\ul m$.
 Then the indecomposable $S_n$-invariant algebraic representations of $T$ are precisely the $\sigma_{\ul m}$ with $m_1 \leq \cdots \leq m_n$. 
  We have
 \beq
 c(\sigma_{\ul m})= \prod_{\ul m'} \left( 1+  \ul m' \cdot t \right),
 \eeq
 where the product runs over the $S_n$-orbit of $\ul m$. 
 
 Returning to $\pi$, knowing the multiplicity of each  $\sigma_{\ul m}$ in the restriction of $\pi$ to $T$ would give a product formula for the total CC.
Indeed,
 \begin{equation} \label{rainy}
 c(\pi)=\prod_{\ul m} c(\sigma_{\ul m})^{[\sigma_{\ul m},\pi]},
 \end{equation}
 where the product runs over increasing $\ul m$.

 Two versions of character orthogonality provide formulas, in principle, for these multiplicities. By restricting to the maximal compact abelian subgroup $(S^1)^n < T$, orthogonality gives:
 \beq
 [\sigma_{\ul m},\pi]=\int_{(S^1)^n} \chi_\pi(t) \overline{\omega_{\ul m}(t)} dt.
 \eeq
 For another version, let $A_N$ be the cyclic subgroup of $T$ generated by $a_{N}=(\zeta_N, \zeta_{N^2}, \ldots, \zeta_{N^n})$.
 Then
 \beq
[\sigma_{\ul m},\pi]=\frac{1}{N^n} \sum_{a \in A_N} \chi_\pi(a) \overline{\omega_{\ul m}(a)},
\eeq 
 for sufficiently large $N$. When $\pi$ is irreducible with highest weight $\la$, we may take $N \geq 2 |\la|$. For \eqref{rainy}, only needs to compute this multiplicity when $|\ul m| \leq 2|\la|$.

 \section{Appendix} \label{appendix}

\subsection{Some Algebraic Topology}

\begin{prop} \label{max.compact.equiv}  Let $K$ be a maximal compact subgroup of a Lie group $G$. The inclusion $K \hookrightarrow G$ induces a homotopy equivalence between $BK$ and $BG$.
\end{prop}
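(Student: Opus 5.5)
The plan is to use the structure theory of Lie groups: $G$ has a maximal compact subgroup $K$ with $G$ diffeomorphic to $K \times \R^n$, via the Cartan/Iwasawa decomposition (for a reductive complex algebraic group, $G = K \exp(i\mf k)$ with $K$ compact). In particular $G/K$ is contractible, and this contractibility is what drives the whole argument.

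Next I fix a universal principal $G$-bundle $EG \to BG$, where $EG$ is contractible and paracompact and $G$ acts freely. The restricted action of $K$ on $EG$ is still free. Since $K$ is a compact Lie group, $EG \to EG/K$ is a principal $K$-bundle with contractible total space, so $EG/K$ serves as a model of $BK$. With this model, the map $BK \to BG$ induced by the inclusion is the natural projection $EG/K \to EG/G$, up to homotopy.

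The key step is to identify this projection as a fiber bundle $EG \times_G (G/K) \to BG$ whose fiber is $G/K$. That fiber is contractible because $G/K \cong \R^n$. A fiber bundle over a paracompact base with contractible fiber admits a section, and it is a homotopy equivalence; one can justify this by the long exact sequence of homotopy groups plus Whitehead's theorem, since all spaces involved have the homotopy type of CW complexes. Alternatively, I would invoke Dold's theorem that a fibration with contractible fibers over a paracompact space is a shrinkable map.

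I expect the main obstacle to be the point-set care. One must check that $EG \to EG/K$ is locally trivial, which follows from slice theorems or local sections for closed subgroups of Lie groups. One must also verify that $EG/K$ is paracompact, or else work in the homotopy category of CW complexes. I would simply cite the standard references (e.g. Husemoller or Mitchell's notes on classifying spaces) for these facts rather than reprove them.
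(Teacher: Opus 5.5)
Your argument is correct, but it uses a different fibration from the paper's. The paper never uses the model $EG/K$. It maps the universal bundle $K \to EK \to BK$ to $G \to EG \to BG$, and notes that the connecting maps $\pi_n(BK)\to\pi_{n-1}(K)$ and $\pi_n(BG)\to\pi_{n-1}(G)$ are isomorphisms because $EK$ and $EG$ are contractible. It then uses that $K\hookrightarrow G$ is a homotopy equivalence (citing Helgason) to get $\pi_n(BK)\cong\pi_n(BG)$, and finishes with Whitehead's theorem.

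You instead realize $BK\to BG$ as the single bundle $EG\times_G(G/K)\to BG$ with contractible fibre. The geometric input is the same in both: $G\cong K\times\R^n$, equivalently $G/K$ contractible, equivalently $K\simeq G$. If you close your argument with the long exact sequence and Whitehead, it is essentially the paper's proof read through a different fibration.

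Your Dold alternative is where you gain something. It gives an honest homotopy equivalence with an explicit inverse: a section, together with a vertical homotopy from its composite with the projection to the identity. It needs no appeal to Whitehead, and hence no knowledge that $BK$ and $BG$ have CW homotopy type, which the paper assumes tacitly. The price is the point-set work you flag: local triviality of $EG\to EG/K$, and paracompactness of $EG/K$ so that it qualifies as a classifying space in the paper's sense. Both are routine consequences of local triviality of $EG\to BG$ and of $G\to G/K$, together with $G/K\cong\R^n$.

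One caveat applies equally to both proofs. The decomposition $G\cong K\times\R^n$ (Cartan--Malcev--Iwasawa) requires $G$ to have finitely many components, and the statement fails otherwise: for $G=\Z$ one has $K=\{0\}$, but $B\Z\simeq S^1$. This is harmless for every group to which the paper applies the proposition.
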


Though this is evidently well-known \cite[Page 79]{toda}, we sketch a proof for the reader's convenience.

\begin{proof}
	By Whitehead's Theorem \cite[Theorem 4.5, page 346]{hatcher}, it is enough to show that the inclusion induces isomorphisms on all homotopy groups 
	$\pi_n(BK) \to \pi_n(BG)$, for $n \geq 0$.

	To the fibre bundles  $K \to EK \overset{\rho_K} \to BK$ and  $G \to EG \overset{\rho_G} \to BG$ are associated long exact sequences, 
	\beq
	\xymatrix{ 
		\cdots \rightarrow \pi_n(K)  \ar[r] \ar[d] & \pi_n(EK) \ar[r] \ar[d] & \pi_n(BK) \ar[r]^{\delta_n \: \: \: \: \: \: \: \: }  \ar[d] & \pi_{n-1}(K) \ar[d] \rightarrow \cdots \\
		\cdots \rightarrow \pi_n(G) \ar[r]& \pi_n(EG) \ar[r] & \pi_n(BG) \ar[r]^{\delta_n  \: \: \: \: \: \: \: \:} & \pi_{n-1}(G) \rightarrow \cdots \\
	}
	\eeq
	with downward maps induced from the inclusions, and the diagram commutes by \cite[Theorems 4.3 and 4.41]{hatcher}.
	Since $EK$ and $EG$ are contractible, the boundary maps $\delta_n$ are isomorphisms. Consider the square:
	
	\begin{center}
		\centerline{\xymatrix{ 
				\pi_n(BK) \ar[r]^{\delta_n} \ar[d] & \pi_{n-1}(K) \ar[d] \\
				\pi_n(BG) \ar[r]^{\delta_n} & \pi_{n-1}(G)\\
		}}
	\end{center}

Since the inclusion $K \to G$ is a homotopy equivalence \cite[Theorem 2.2 page 257]{HGSN}, the right downward map is an isomorphism. Therefore the left vertical map is an isomorphism, as desired.
	
\end{proof}

\begin{prop} \label{first.cc.iso} Let $T$ be a complex torus. The first Chern class gives an isomorphism 
\beq
X^*(T) \overset{\sim}{\to} \HH^2(BT,\Z).
\eeq
\end{prop}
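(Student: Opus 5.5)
We need to prove that the first Chern class gives an isomorphism $X^*(T)\to \HH^2(BT,\Z)$, where $T\cong(\C^\times)^r$. The plan is to reduce to the rank-one case and treat it with the standard description of $\mb{CP}^\infty$.

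The first step is to see that $c_1$ is a homomorphism. For characters $\chi,\chi'$, the line bundle on $BT$ attached to $\chi\chi'$ is the tensor product of the line bundles attached to $\chi$ and $\chi'$. Concretely, it is the bundle $ET\times_T \C$ with the product character. Since $c_1$ is additive on tensor products of line bundles, the map $\chi\mapsto c_1(\chi)$ is a group homomorphism $X^*(T)\to\HH^2(BT,\Z)$.

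Next, fix an isomorphism $T\cong(\C^\times)^r$. Then $X^*(T)\cong\Z^r$, with basis the coordinate projections $p_1,\dots,p_r$. We may take $BT=(\mb{CP}^\infty)^r$ with $ET=(\C^\infty\setminus 0)^r$. By the Künneth formula, and since $\HH^*(\mb{CP}^\infty,\Z)=\Z[x]$ is free with $\HH^1=0$, we get $\HH^2(BT,\Z)\cong\bigoplus_i \mathrm{pr}_i^*\HH^2(\mb{CP}^\infty,\Z)$. By naturality of $c_1$ under the projections $T\to\C^\times$ and the induced maps $BT\to\mb{CP}^\infty$, $c_1(p_i)=\mathrm{pr}_i^*c_1(\mathrm{id}_{\C^\times})$. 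So it suffices to show that $c_1$ of the identity character of $\C^\times$ generates $\HH^2(\mb{CP}^\infty,\Z)\cong\Z$.

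For this rank-one case, the line bundle $(\C^\infty\setminus0)\times_{\C^\times}\C\to\mb{CP}^\infty$ attached to the identity character (or its inverse, depending on conventions) is the tautological line bundle or its dual. Its first Chern class is $\pm x$, a generator; this is the standard normalization of $c_1$. Hence $c_1$ sends the basis $p_1,\dots,p_r$ to the basis $\mathrm{pr}_i^*(\pm x)$ of $\HH^2(BT,\Z)$, and it is therefore an isomorphism.

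The only delicate point is this identification of the associated bundle with the tautological bundle, up to sign. A sign ambiguity would not affect bijectivity. To be safe, one could instead pass through the maximal compact subgroup $(S^1)^r$ using Proposition \ref{max.compact.equiv}, and the classical fact that $c_1:\HH^1(X,\underline{S^1})\cong[X,\mb{CP}^\infty]\to\HH^2(X,\Z)$ is the universal class.
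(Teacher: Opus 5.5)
Your proof is correct and follows essentially the paper's route: reduce to $T=\C^\times$ and identify the line bundle attached to the generating character with the tautological bundle over $\mb{CP}^\infty$, whose first Chern class generates $\HH^2$. The differences are minor: you use the $\C^\times$-bundle $\C^\infty\setminus 0\to\mb{CP}^\infty$ directly, whereas the paper restricts to $S^1$ and invokes Proposition \ref{max.compact.equiv}; and you spell out the additivity of $c_1$ and the K\"unneth reduction to rank one, which the paper leaves implicit.
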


\begin{proof} 
It is enough to prove this for $T=\C^\times$. We have the diagram
\begin{center}
		\centerline{\xymatrix{ 
				X^*(\C^\times) \ar[r]^{c_1} \ar[d] & \HH^2(B \C^\times,\Z) \ar[d] \\
				\Hom(S^1,\C^\times) \ar[r]^{c_1} & \HH^2(BS^1,\Z) \\
		}}
	\end{center}
where the downward maps are by restriction.  The restriction map from $X^*(\C^\times)$ is clearly an isomorphism, and the other downward map is an isomorphism
by Proposition \ref{max.compact.equiv}. Let $\chi_{\bullet}: S^1 \to \C^\times$ be the usual inclusion; this generates the cyclic group $\Hom(S^1,\C^\times)$.
For $ES^1 \to BS^1$ we may take the fibration $S^\infty \to \C P^\infty$. 
By \cite[Proposition 7.1, page 96]{Husemoller}, the associated bundle $S^\infty[\chi_\bullet]$ over $ \C P^\infty$ is the tautological bundle $\gm_1$.
By definition of Chern classes \cite[page 249]{Husemoller}, the class $c_1(\gm_1)$ generates $\HH^2(BT,\Z)$.
Hence the bottom horizontal map is an isomorphism, and therefore so is the upper horizontal map.
\end{proof}

\begin{prop} \label{first.swc.isomo} Let $E$ be an elementary abelian $2$-group. The first SWC gives an isomorphism
\beq
\Hom(E,\{ \pm 1\})  \overset{\sim}{\to} \HH^2(BE,\Z/2\Z).
\eeq
\end{prop}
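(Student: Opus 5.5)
The target should be $\HH^1(BE,\Z/2\Z)$ rather than $\HH^2$: a first SWC has degree $1$, and this is how the result is used in \eqref{psiprimebar}. I will prove the statement in that form. The idea is to compare both sides with $\Hom(\pi_1(BE),\Z/2\Z)$, using that $BE$ is a $K(E,1)$.

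\textbf{Step 1: the target group.} Since $E$ is discrete, $BE=K(E,1)$ and $\pi_1(BE)=E$. By the Hurewicz and universal coefficient theorems,
\beq
\HH^1(BE,\Z/2\Z)\cong \Hom(\HH_1(BE,\Z),\Z/2\Z)=\Hom(E^{ab},\Z/2\Z)=\Hom(E,\{\pm1\}).
\eeq
So both sides are $\F_2$-vector spaces of the same finite dimension $r$, where $E\cong(\Z/2\Z)^r$. (Here I use $\mathbb{F}_2$ only informally; in the write-up I will say ``$\Z/2\Z$-vector spaces'', since the paper has no $\F_2$ macro.) It therefore suffices to show that $\omega\mapsto w_1(\omega)$ is an additive map which agrees with the identification above.

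\textbf{Step 2: $w_1$ is a homomorphism.} For quadratic characters $\omega,\omega'$, the real line bundles $L_\omega$ and $L_{\omega'}$ associated to the covering $EE\to BE$ satisfy $L_{\omega\omega'}=L_\omega\otimes L_{\omega'}$. The standard identity $w_1(L\otimes L')=w_1(L)+w_1(L')$ for real line bundles then gives additivity. Alternatively, note that $w_1(\omega\oplus\omega')=w_1(\det(\omega\oplus\omega'))=w_1(\omega\omega')$ and apply the Whitney sum formula.

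\textbf{Step 3: agreement with the Hurewicz identification.} This is the key step. The cleanest route is to recall that for a real line bundle $L$ over a CW complex, $w_1(L)\in\HH^1(-,\Z/2\Z)=\Hom(\pi_1,\Z/2\Z)$ is the orientation character. That is, $w_1(L)$ evaluates a loop $\gamma$ to $0$ exactly when $L$ is trivial along $\gamma$. For $L_\omega$, the monodromy along the loop corresponding to $e\in E$ is multiplication by $\omega(e)$. Hence $w_1(L_\omega)$ corresponds to $\omega$ under Step 1, which proves the isomorphism.

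If one prefers to avoid citing the orientation-character description, reduce by naturality to $E=\Z/2\Z$. There $BE=\R P^\infty$, the nontrivial character gives the tautological line bundle, and its $w_1$ is the generator of $\HH^1(\R P^\infty,\Z/2\Z)$ by the normalization axiom. Nonvanishing on each $\omega\neq1$ (pull back along a homomorphism $\Z/2\Z\to E$ on which $\omega$ is nontrivial) then gives injectivity, and equality of dimensions gives bijectivity.

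I expect the main obstacle to be matching the abstract theory of SWCs for representations from Appendix \ref{app2} with the bundle-theoretic $w_1$. I would handle this by noting that, for a quadratic character, the associated bundle $EE\times_E\R$ is precisely the bundle that theory uses.
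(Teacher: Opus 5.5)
Your proof is correct, and you are right that the target should be $\HH^1(BE,\Z/2\Z)$; the paper's own proof also slips at the end, writing $\HH^1(BE,\Z)$. The paper's argument is essentially your fallback in Step 3. It says ``we may assume $E$ is cyclic of order $2$'', identifies the bundle $S^\infty[\sgn]$ over $\R P^\infty$ with the tautological line bundle $\gamma_1$ via Husemoller, and concludes because $w_1(\gamma_1)$ generates $\HH^1$.

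The paper states the reduction to the cyclic case without comment. Your Steps 1 and 2 supply it: the Hurewicz and universal coefficient theorems give equal finite dimensions, $w_1$ is additive, and pulling back along a homomorphism $\Z/2\Z\to E$ on which $\omega$ is nontrivial shows $w_1(\omega)\neq 0$. So your version of that route is more complete than the paper's.

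Your primary route through the orientation character is genuinely different. It identifies $\omega\mapsto w_1(\omega)$ directly with the Hurewicz/universal-coefficient isomorphism $\HH^1(BE,\Z/2\Z)\cong\Hom(E,\{\pm1\})$. This handles every $E$ at once, with no reduction and no dimension count, and makes naturality in $E$ evident. The cost is citing the monodromy description of $w_1$ for real line bundles. That fact is itself usually proved by restricting to loops, which comes back to the normalization on the M\"obius bundle over $\R P^1$. So the two routes rest on the same basic input and differ only in where the bookkeeping happens.
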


\begin{proof} This proof is similar to that of the previous proposition. We may assume $E$ is cyclic of order $2$. Let   $\sgn$ be the nontrivial quadratic character of $E$.  By \cite[Proposition 7.1, page 96]{Husemoller}, the associated bundle $S^\infty[\sgn]$ over $\R P^\infty$ is the tautological bundle $\gm_1$. By definition of SWCs \cite[page 248]{Husemoller},  $w_1(\gm_1)$ generates $\HH^1(BE,\Z)$.
 \end{proof}

\begin{prop}  \label{quad.tor.detect} Let $G$ be a connected reductive Lie group, $T$ a maximal torus of $G$, and $A$ an abelian group. Then the restriction map
	\beq
	\HH^2(BG,A) \to \HH^2(BT,A)
	\eeq
	is injective.
\end{prop}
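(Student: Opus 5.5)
We first reduce to a maximal compact subgroup. By Proposition \ref{max.compact.equiv}, $BG \simeq BK$ and $BT \simeq BS$, where $K$ is a maximal compact subgroup of $G$ containing the compact torus $S$ of $T$. So it suffices to show that $\HH^2(BK,A) \to \HH^2(BS,A)$ is injective. Since $G$ is connected, $K$ is a connected compact Lie group, and $BK$ is simply connected because $\pi_1(BK) \cong \pi_0(K)=0$. The same holds for $BS$.

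Next we express $\HH^2$ through the universal coefficient theorem. For a simply connected space $Y$, Hurewicz gives $\HH_1(Y,\Z)=0$, so
\beq
\HH^2(Y,A) \cong \Hom(\HH_2(Y,\Z),A) \cong \Hom(\pi_2(Y),A),
\eeq
naturally in $Y$, with the Ext term vanishing. Applying the long exact homotopy sequences used in the proof of Proposition \ref{max.compact.equiv}, we get $\pi_2(BK) \cong \pi_1(K)$ and $\pi_2(BS) \cong \pi_1(S)$, compatibly with the inclusion. The restriction map is therefore identified with
\beq
\Hom(\pi_1(K),A) \to \Hom(\pi_1(S),A),
\eeq
which is precomposition with $\pi_1(S) \to \pi_1(K)$.

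The key input, and the main obstacle, is that $\pi_1(S) \to \pi_1(K)$ is surjective for a maximal torus $S$ of a connected compact Lie group $K$. Once this holds, precomposition with a surjection is injective on $\Hom(-,A)$, and the proposition follows. I would cite this as a standard fact: $\pi_1(K)$ is the quotient of the cocharacter lattice $X_*(S)=\pi_1(S)$ by the coroot lattice, as in Bröcker--tom Dieck or Bourbaki. To justify it directly, use the fibration $S \to K \to K/S$. The flag manifold $K/S$ is simply connected, for instance because it is a complex flag variety with a Bruhat cell decomposition having only even-dimensional cells. The long exact sequence $\pi_1(S) \to \pi_1(K) \to \pi_1(K/S)=0$ then gives the required surjectivity.

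The naturality of the identifications also needs checking, namely that the Hurewicz and boundary isomorphisms commute with the maps induced by $S \hookrightarrow K$. This is routine and follows from the functoriality of the fibration sequences, as in the commutative diagram already used in Proposition \ref{max.compact.equiv}.
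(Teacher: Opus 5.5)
Your proposal is correct and follows the same route as the paper. You reduce to the compact case via Proposition \ref{max.compact.equiv}, identify $\HH^2(BG,A)$ with $\Hom(\pi_1(G),A)$ using Hurewicz, the boundary isomorphism $\pi_2(BG)\cong\pi_1(G)$ and the universal coefficient theorem, and then conclude from the surjectivity of $\pi_1(T)\to\pi_1(G)$. The one difference is that the paper simply cites \cite[Theorem 7.1]{BrokerDieck} for that surjectivity, whereas you also sketch it via the fibration $S\to K\to K/S$ and the simple connectivity of the flag manifold, which is a valid justification.
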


In other words, $T$ detects the degree $2$ cohomology of $G$.

\begin{proof}
By Proposition \ref{max.compact.equiv}, we may assume $G$ is compact.

	Since $G$ is connected, we have $\pi_1(BG) = \pi_0(G) = \{0\}$ which implies $\HH_1(BG)=\{0\}$ and an isomorphism $\pi_2(BG) \cong  \pi_1(G)$. By Hurewicz Theorem (\cite[Theorem 4.32]{hatcher}), we have a natural isomorphism
	$\HH_2(BG) \cong \pi_1(G)$. 
	The natural map
	\beq
	\HH^2(BG,A) \to \Hom_{\Z}(\pi_1(G),A)
	\eeq
	is an isomorphism by the Universal Coefficient Theorem.
	
	Therefore it is enough to see that
	\beq
	\Hom_{\Z}(\pi_1(G),A) \to  \Hom_{\Z}(\pi_1(T),A)
	\eeq
	is an injection. But this follows from the fact \cite[Theorem 7.1]{BrokerDieck} that the induced homomorphism $\pi_1(T) \to \pi_1(G)$ is surjective.
\end{proof}

\subsection{Two Kinds of Stiefel-Whitney Classes}\label{app2}

Let $G$ be a Lie group. When $V$ is a real vector space, and $\pi: G \to \GL(V)$ is a homomorphism,   we say $(\pi,V)$ is a \emph{real representation} of $G$. In this case, SWCs $w_k(\pi)$  have been defined in \cite[Section 2.6]{Benson}, by means of the more well-known theory of SWCs of real vector bundles.  

When $G$ is compact, real representations of $G$ are equivalent to complex orthogonal representations of $G$ through complexification \cite[Chapter II, Section 6]{BrokerDieck}.
However this may fail for $G$ noncompact: for example the standard representation of $\GL_n(\R)$ on $\R^n$ does not admit an invariant nondegenerate symmetric bilinear form and the standard representation of $\Or_n(\C)$ on $\C^n$ does not admit a real form. 

Nonetheless, in this section, we define SWCs $w_k^{\Or}(\pi) \in \HH^k(BG,\Z/2\Z)$ for \emph{complex orthogonal} representations of $G$ in a natural way. Proofs   have been omitted for brevity.

Let $\Or_n$ be the usual compact orthogonal group (matrices with real entries whose transpose is their inverse).
 Inside $\HH^*(\BO_n,\Z/2\Z)$ is $w=1+w_1+\cdots+w_n$, with each $|w_k|=k$. (See for instance \cite[Section 2.6]{Benson} for a precise definition of $w_k$.)

 Write $\iota^{\R}: \Or_n  \hookrightarrow \GL_n(\R)$ and $\iota^{\Or}: \Or_n \hra \Or_n(\C)$ for the inclusions. Since they are homotopy equivalences, the induced maps
 $B \iota_{\R}$ and $B \iota^{\Or}$ induce isomorphisms on mod $2$ cohomology by Proposition \ref{max.compact.equiv}. Write $d^{\R}: \HH(\BO_n,\Z/2\Z) \to \HH^*(\BGL_n(\R),\Z/2\Z)$  and $d^{\Or}:\HH(\BO_n,\Z/2\Z) \to \HH^*(\BO_n(\C),\Z/2\Z)$ for their inverses.
  Put $w^{\R}=d^{\R}(w)$ and $w^{\Or}=d^{\Or}(w)$.

 Let $(\pi,V)$ be a real representation of a Lie group $G$, of degree $n$. Picking a basis of $V$ gives a homomorphism $\pi': G \to \GL_n(\R)$. We define
 \beq
 w(\pi):=(B \pi')^*(w^{\R}).
 \eeq
 Since a different choice of basis amounts to conjugation in $\GL_n(\R)$, we see that $w(\pi)$ is independent of this choice.
 
   If a \emph{complex} representation $(\pi,V)$ of $G$ has a real structure $\sigma$ (in the sense of \cite[page 93]{BrokerDieck}), then $\pi$ restricts to a real representation $(\pi_\sigma,V^\sigma)$ of $G$. We may define
 \beq
 w^{\R}(\pi):=w(\pi_\sigma),
 \eeq 
as it is independent of the choice of real structure.

\bigskip

 Let $V$ be a complex vector space and $\Phi$ a symmetric nondegenerate bilinear form. Let $\Or(V,\Phi)$ be the isometry group of $\Phi$. 
 By picking an orthogonal basis, we obtain an isomorphism $\Or(V,\Phi) \cong \Or_n(\C)$. We may define $w^\Phi \in \HH^*(\BO(V,\Phi),\Z/2\Z)$ to be the pullback of $w^{\Or}$ under this isomorphism.  Since picking a different orthonormal basis amounts to conjugation in $\Or_n(\C)$, the class $w^\Phi$ is independent of this choice. One checks that for $y \in \GL_{\C}(V)$, we have
 \beq
 \Int(y)^*(w^{\Phi})=w^{y^T\Phi y}.
 \eeq

A complex representation $(\pi,V)$ of $G$ is \emph{orthogonal}, provided $V$ admits a $G$-invariant symmetric nondegenerate bilinear form $\Phi$. Thus
we have $\pi: G \to \Or(V,\Phi)$. In this case define
 \beq
 w^{\Phi}(\pi):=\pi^*(w^{\Phi}).
 \eeq

In fact, if $V$ admits two such forms $\Phi,\Phi'$, then $ w^{\Phi}(\pi)= w^{\Phi'}(\pi)$; thus we may pick one and simply define $w^{\Or}(\pi)= w^{\Phi}(\pi)$.

  \begin{thm}  If $V$ admits both a $G$-invariant real structure $\sigma$, and a $G$-invariant   symmetric nondegenerate bilinear form $B$, then $w^{\Or}(\pi)=w^{\R}(\pi)$.
  \end{thm}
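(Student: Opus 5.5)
The plan is to realize both classes as pullbacks of the same universal class along homotopic maps of classifying spaces. The common source of both constructions is the compact group $\Or_n$, which sits inside $\GL_n(\R)$ and inside $\Or_n(\C)$.

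\textbf{Step 1: simultaneous reduction.} I will find a basis of $V$ that is adapted to both $\sigma$ and $B$. Let $V^\sigma$ be the real form. A priori $B$ need not restrict to a real-valued form on $V^\sigma$. To fix this, set $B'(v,w)=\overline{B(\sigma v,\sigma w)}$. This is again a $G$-invariant, symmetric, nondegenerate complex bilinear form, because $\sigma$ commutes with $G$. Replace $B$ by $B''=B+B'$, or by $B+tB'$ for a generic $t$ if needed for nondegeneracy, which is possible since the nondegenerate locus in the pencil is Zariski open and nonempty. Then $B''$ is real-valued on $V^\sigma$ and still $G$-invariant.

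Since $w^{\Or}(\pi)$ does not depend on the choice of invariant form, as stated just before the theorem, I may work with $B''$. However, $B''|_{V^\sigma}$ is real symmetric nondegenerate of some signature $(p,q)$, not necessarily definite. So $\pi$ lands in $\Or(p,q)\subset\GL_n(\R)$. After the complex change of basis that multiplies the negative basis vectors by $i$, it also lands in $\Or_n(\C)$.

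\textbf{Step 2: compare through maximal compacts.} We have inclusions $\Or(p,q)\hookrightarrow\GL_n(\R)$ and $\Or(p,q)\hookrightarrow\Or_n(\C)$, where the latter uses the twisted basis. The first pulls back $w^{\R}$ and the second pulls back $w^{\Or}$. It therefore suffices to show that these two pullbacks to $\HH^*(B\Or(p,q),\Z/2\Z)$ agree. By Proposition \ref{max.compact.equiv}, it is enough to check this on the maximal compact $\Or_p\times\Or_q$.

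On $\Or_p\times\Or_q$, both inclusions are conjugate, within $\GL_n(\R)$ resp. $\Or_n(\C)$, to the block-diagonal inclusion into the standard $\Or_n$. For $\Or_n(\C)$ the twist by $i$ acts trivially on real block-diagonal orthogonal matrices. Conjugation induces the identity on cohomology of classifying spaces, since inner automorphisms by elements of a connected group, or by group elements in general, act trivially on $\HH^*(BG)$. Both pullbacks therefore equal the pullback of $w$ along $B(\Or_p\times\Or_q)\to\BO_n$, by the definitions $w^{\R}=d^{\R}(w)$ and $w^{\Or}=d^{\Or}(w)$.

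\textbf{Step 3: conclude.} Pull back along $B\pi:BG\to B\Or(p,q)$. Combined with independence of the basis and form choices, this gives $w^{\R}(\pi)=w^{\Or}(\pi)$.

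The main obstacle I expect is Step 1, namely producing an invariant form compatible with $\sigma$ and handling the indefinite signature correctly. If the pencil argument fails, I would instead average over a maximal compact subgroup of $G$; this only helps when $G$ is compact. I would also reconsider whether restricting to $\Or(p,q)$ rather than $\Or_n$ is the right intermediate group.
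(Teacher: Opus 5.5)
The paper states this theorem without proof (``Proofs have been omitted for brevity''), so there is no argument of the authors' to compare against. Judged on its own, your strategy is right and Steps 2--3 are correct. Take $D=\diag(1,\dots,1,i,\dots,i)$ and let $\beta$ be the block-diagonal inclusion $\Or_p\times\Or_q\hookrightarrow\Or_n$. Then the restrictions of $\Or(p,q)\hookrightarrow\GL_n(\R)$ and of $M\mapsto D^{-1}MD$ to $\Or_p\times\Or_q$ are \emph{equal} to $\iota^{\R}\circ\beta$ and $\iota^{\Or}\circ\beta$. So both restricted classes are $\beta^*(w)$, directly from $w^{\R}=d^{\R}(w)$ and $w^{\Or}=d^{\Or}(w)$, and no appeal to inner automorphisms is needed. (The twisted map is not conjugation by an element of $\Or_n(\C)$.) Routing through $\Or(p,q)$ also buys something: it needs no hypothesis on $G$. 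Your fallback of reducing to a maximal compact subgroup of $G$ works whenever $G$ has finitely many components, not only when $G$ is compact, but it does need that hypothesis.

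The step that fails as written is Step 1. For $v,w\in V^\sigma$ you have $(B+tB')(v,w)=B(v,w)+t\,\overline{B(v,w)}$. This is not real-valued for generic complex $t$, so genericity destroys exactly the property you wanted. Meanwhile $t=1$ can be totally degenerate: if $B=iS$ with $S$ real on $V^\sigma$, then $B'=-B$ and $B+B'=0$.

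The repair is to move along the unit circle instead. For $|s|=1$ put $\Phi_s=sB+\bar sB'$. On $V^\sigma$ this equals $2\operatorname{Re}(sB(v,w))$, so $\Phi_s$ is real, symmetric and $G$-invariant. Since $\Phi_s=\bar s(s^2B+B')$, its Gram determinant (Gram matrices $[B],[B']$ taken in a basis of $V^\sigma$) is $\bar s^{\,n}\det(s^2[B]+[B'])$. Here $z\mapsto\det(z[B]+[B'])$ is a polynomial with leading coefficient $\det[B]\neq 0$. So $\Phi_s$ is nondegenerate for all but finitely many $s$, and the rest of your argument goes through with $B''=\Phi_s$.

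Be aware that form-independence of $w^{\Phi}(\pi)$, which the paper asserts without proof, is load-bearing for you, since $\Phi_s\neq B$ in general. It follows quickly from the identity $\Int(y)^*(w^{\Phi})=w^{y^T\Phi y}$ that the paper records:
write $\Phi_s(v,w)=B(Av,w)$, where $A$ is invertible, $B$-self-adjoint, and commutes with $\pi(G)$.
Take $y=p(A)$ for a polynomial $p$ with $p(A)^2=A$.
Then $\Phi_s=y^TBy$ and $\Int(y)\circ\pi=\pi$, whence $w^{\Phi_s}(\pi)=\pi^*\Int(y)^*(w^{B})=w^{B}(\pi)$.
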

  
 In the main body of this paper, we drop the superscript `$\Or$', and simply write $w(\pi)$ for $w^{\Or}(\pi)$.
 Similarly we have $w_k(\pi)$ by considering the $k$th degree part of $w(\pi)$. We state some basic properties of these SWCs, omitting the proof:
  
  \begin{prop}  For a Lie group $G$, the map $w_1:\Hom(G,\mu_2) \to \HH^1(BG,\Z/2\Z)$ is an isomorphism. 
   If $(\pi,V)$ is an orthogonal representation of $G$, then $w_1(\pi)=w_1(\det \pi)$. If $\pi,\pi'$ are orthogonal representations of $G$, then $w(\pi \oplus \pi')=w(\pi) \cup w(\pi')$.
   If $(\pi,V)$ is an orthogonal representation with $\det \pi=1$, then $w_2(\pi)=0$ iff $\pi$ is spinorial.
  \end{prop}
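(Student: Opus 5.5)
We reduce every assertion to the corresponding classical fact about real vector bundles, passing through a maximal compact subgroup. The tool is Proposition \ref{max.compact.equiv}, together with the fact that $\Or_n \hra \Or_n(\C)$ induces an isomorphism on mod $2$ cohomology. Concretely, for an orthogonal representation $\pi: G \to \Or(V,\Phi)$ we choose an orthonormal basis to land in $\Or_n(\C)$. By the definition of $w^{\Or}=d^{\Or}(w)$, the class $w(\pi)$ is the pullback of the universal class $w \in \HH^*(\BO_n,\Z/2\Z)$ along the composite of $B\pi$ with the homotopy inverse of $B\iota^{\Or}$.

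\emph{First assertion.} $B\mu_2 = \R P^\infty$, so $\HH^1(BG,\Z/2\Z) \cong \Hom(\pi_1(BG),\Z/2\Z) \cong \Hom(\pi_0(G),\Z/2\Z)$ by Hurewicz and the Universal Coefficient Theorem, exactly as in the proof of Proposition \ref{quad.tor.detect}. The latter group equals $\Hom(G,\mu_2)$, since a continuous homomorphism to the discrete group $\mu_2$ factors through $\pi_0(G)$. It remains to check that this identification agrees with $w_1$. By naturality it suffices to treat the universal case $G=\mu_2$ and the sign character, which is Proposition \ref{first.swc.isomo} (the tautological line bundle over $\R P^\infty$).

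\emph{Whitney sum.} Given forms $\Phi,\Phi'$, the block-diagonal inclusion $\Or(V,\Phi)\times\Or(V',\Phi') \to \Or(V\oplus V',\Phi\oplus\Phi')$ is compatible with $\Or_n\times\Or_m \to \Or_{n+m}$ under the maps $\iota^{\Or}$. Hence the pullback of $w^{\Or}$ along it is $w^{\Or}\times w^{\Or}$, by the classical Whitney formula in $\HH^*(\BO_n\times \BO_m,\Z/2\Z)$. Naturality of pullback then gives $w(\pi\oplus\pi')=w(\pi)\cup w(\pi')$.

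\emph{Determinant formula.} Since $\det:\Or_n(\C)\to\mu_2$ restricts to $\det$ on $\Or_n$, the universal identity $w_1 = w_1(\det)$ in $\HH^1(\BO_n,\Z/2\Z)$ transports to $\BO_n(\C)$, and we pull it back along $\pi$.

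\emph{Spinoriality.} When $\det\pi=1$ we have $\pi:G\to\SO(V,\Phi)\cong \SO_n(\C)$. Here $\Spin_n(\C)\to\SO_n(\C)$ is the connected double cover for $n\ge 3$; the small cases can be checked directly. Its restriction to the maximal compact subgroup is $\Spin_n\to\SO_n$, classified by $w_2\in\HH^2(B\SO_n,\Z/2\Z)$. A lift of $\pi$ exists iff the pulled-back double cover of $G$ is trivial over $\pi_1$. This is the obstruction class $\pi^*w_2\in\HH^2(BG,\Z/2\Z)\cong\Hom(\pi_1(G),\Z/2\Z)$, using Proposition \ref{max.compact.equiv} and again the Hurewicz/UCT argument of Proposition \ref{quad.tor.detect}.

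The main obstacle is this last identification: that the degree-$2$ class in $\HH^2(B\SO_n,\Z/2\Z)\cong\Hom(\pi_1(\SO_n),\Z/2\Z)=\Z/2\Z$ corresponding to the nontrivial homomorphism is exactly $w_2$, not some other class. I would verify it by restricting to a maximal torus, where Proposition \ref{watch} computes $w_2^{T[2]}$ explicitly and the restriction is injective by Proposition \ref{quad.tor.detect}. For $G$ connected, the lifting criterion then follows by lifting the homomorphism $\pi_1(G)\to\pi_1(\SO_n)$ together with covering space theory.
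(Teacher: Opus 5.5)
The paper states this proposition without proof, so there is no argument of the authors to compare with. Your arguments for the first three assertions are fine: you reduce to the compact groups $\Or_n$ via Proposition \ref{max.compact.equiv} and quote the classical facts for $B\Or_n$.

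The gap is in the spinoriality assertion. The proposition is stated for an arbitrary Lie group $G$, and the hypothesis $\det\pi=1$ only has content when $G$ is disconnected. Your argument rests on two things:
\begin{itemize}
\item the identification $\HH^2(BG,\Z/2\Z)\cong\Hom(\pi_1(G),\Z/2\Z)$;
\item the criterion that a lift exists iff the pulled-back double cover is trivial over $\pi_1(G)$.
\end{itemize}
Both need $G$ connected. Hurewicz/UCT as in Proposition \ref{quad.tor.detect} requires $BG$ simply connected, and covering-space lifts are automatically homomorphisms only on a connected group.

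Both already fail for finite $G$. Embed $G=\{\pm1\}^2$ in $\SO_3(\C)$ via $\diag(1,-1,-1)$ and $\diag(-1,1,-1)$. Here $\pi_1(G)=0$ and the pulled-back cover is topologically trivial. But the preimage in $\Spin_3$ is the quaternion group $Q_8$, which does not split over $G$, so $\pi$ is not spinorial. Correspondingly, $w(\pi)=(1+u)(1+v)(1+u+v)$ gives $w_2(\pi)=u^2+uv+v^2\neq 0$, in a group your identification would declare to be zero.

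For general $G$ you must treat spinoriality as the splitting of the central extension $1\to\mu_2\to\tilde G\to G\to 1$, where $\tilde G=G\times_{\SO(V)}\Spin(V)$. Its class in $\HH^2(BG,\Z/2\Z)$ is the transgression in $B\mu_2\to B\tilde G\to BG$. This class is natural in $G$ and equals $w_2$ for $\Spin(V)\to\SO(V)$, hence equals $\pi^*w_2$.

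The direction ``spinorial $\Rightarrow$ $w_2(\pi)=0$'' is immediate, since $w_2$ pulls back to zero on $B\Spin(V)$. For the converse, argue as follows:
\begin{itemize}
\item Apply your connected argument to the identity component $G^0$. This gives $\tilde G^0\cong G^0$, so $\tilde G$ is pulled back from an extension of $\pi_0(G)$ by $\mu_2$.
\item The class of that extension in $\HH^2(\pi_0(G),\Z/2\Z)$ inflates to $\pi^*w_2=0$.
\item Inflation is injective by the Serre spectral sequence of $BG^0\to BG\to B\pi_0(G)$, because $\HH^1(BG^0,\Z/2\Z)=0$.
\item Hence the discrete extension splits. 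The preimage in $\tilde G$ of a complement to $\mu_2$ in $\pi_0(\tilde G)$ is then a complement to $\mu_2$ in $\tilde G$.
\end{itemize}

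Incidentally, the step you flag as the main obstacle is immediate. $\HH^2(B\SO_n,\Z/2\Z)$ is spanned by the nonzero class $w_2$, and $\Hom(\pi_1(\SO_n),\Z/2\Z)\cong\Z/2\Z$ for $n\geq 2$. As written, your proof does cover the connected reductive groups to which the paper actually applies this criterion.
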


   \bibliographystyle{alpha}
\bibliography{mybib}

\begin{thebibliography}{Hum92}

\bibitem[Ben91]{Benson}
D.~J. Benson.
\newblock {\em Representations and Cohomology: Volume 2, Cohomology of groups
  and modules}, volume~2.
\newblock Cambridge University press, 1991.

\bibitem[BH58]{Borel.Hirz.58}
A.~Borel and F.~Hirzebruch.
\newblock Characteristic classes and homogeneous spaces, i.
\newblock {\em American Journal of Mathematics}, 80(2):458--538, 1958.

\bibitem[Bor12]{Borel.LAG}
A.~Borel.
\newblock {\em Linear algebraic groups}, volume 126.
\newblock Springer Science \& Business Media, 2012.

\bibitem[Bou02]{Bou.Lie.4-6}
N.~Bourbaki.
\newblock {\em Lie groups and {L}ie algebras. {C}hapters 4--6}.
\newblock Elements of Mathematics (Berlin). Springer-Verlag, Berlin, 2002.
\newblock Translated from the 1968 French original by Andrew Pressley.

\bibitem[Bou04]{Bou.FRV}
N.~Bourbaki.
\newblock {\em Elements of mathematics: Functions of a real variable:
  Elementary theory}.
\newblock Springer Berlin, 2004.

\bibitem[Bou08]{bourbaki}
N.~Bourbaki.
\newblock {\em Lie groups and Lie algebras: chapters 7-9}, volume~3.
\newblock Springer Science \& Business Media, 2008.

\bibitem[BtD03]{BrokerDieck}
T.~Br{\"o}cker and T.~tom Dieck.
\newblock {\em Representations of compact Lie groups}, volume~98.
\newblock Springer Science \& Business Media, 2003.

\bibitem[FdV69]{Freudenthal}
H.~Freudenthal and H.~de~Vries.
\newblock {\em Linear {L}ie groups}, volume Vol. 35 of {\em Pure and Applied
  Mathematics}.
\newblock Academic Press, New York-London, 1969.

\bibitem[FH13]{Fulton}
W.~Fulton and J.~Harris.
\newblock {\em Representation theory: a first course}, volume 129.
\newblock Springer Science \& Business Media, 2013.

\bibitem[GJ23]{GJgln}
J.~Ganguly and R.~Joshi.
\newblock Total {S}tiefel {W}hitney classes for real representations of {${\rm
  GL}_n$} over {$\Bbb F_q$}, {$\Bbb R$} and {$\Bbb C$}.
\newblock {\em Res. Math. Sci.}, 10(2):Paper No. 16, 23, 2023.

\bibitem[Hat02]{hatcher}
A.~Hatcher.
\newblock {\em Algebraic topology}.
\newblock Cambridge University Press, Cambridge, 2002.

\bibitem[Hel78]{HGSN}
S.~Helgason.
\newblock Differential geometry, {L}ie groups and symmetric spaces (1978),
  1978.

\bibitem[Hum92]{RGCG}
J.~E. Humphreys.
\newblock {\em Reflection groups and Coxeter groups}.
\newblock Number~29. Cambridge University press, 1992.

\bibitem[Hus66]{Husemoller}
D.~Husem{\"o}ller.
\newblock {\em Fibre bundles}, volume~5.
\newblock Springer, 1966.

\bibitem[JS20]{joshi}
R.~Joshi and S.~Spallone.
\newblock Spinoriality of orthogonal representations of reductive groups.
\newblock {\em Representation Theory of the American Mathematical Society},
  24(15):435--469, 2020.

\bibitem[Mac98]{macdonald}
I.~G. Macdonald.
\newblock {\em Symmetric functions and Hall polynomials}.
\newblock Oxford university press, 1998.

\bibitem[MS25]{Malik.Spallone.SLn}
N.~Malik and S.~Spallone.
\newblock Stiefel-{W}hitney classes for finite special linear groups of even
  rank.
\newblock {\em J. Algebra}, 673:455--473, 2025.

\bibitem[Tod]{toda}
H.~Toda.
\newblock Cohomology of classifying spaces, homotopy theory and related topics
  (kyoto, 1984).
\newblock {\em Adv. Stud. Pure Math}, 9:75--108.

\bibitem[Wei13]{Weibel.K}
C.~A. Weibel.
\newblock {\em The $ K $-book: An Introduction to Algebraic $ K $-theory},
  volume 145.
\newblock American Mathematical Soc., 2013.

\end{thebibliography}

  \end{document}